\newcommand{\ep}{\epsilon}
\newcommand{\abs}[1]{{\lvert{#1}\rvert}}
\newcommand{\norm}[1]{{\lVert{#1}\rVert}}
\newcommand{\ang}[1]{{\langle{#1}\rangle}}
\newcommand{\RR}{\mathbb{R}}
\newcommand{\NN}{\mathbb{N}}
\newcommand{\CC}{\mathbb{C}}
\newcommand{\F}{\mathcal{F}}
\newcommand{\pa}{\partial}
\newcommand{\md}{\mathrm{d}}
\newcommand{\mb}{\mathrm{b}}
\renewcommand{\Im}{\operatorname{Im}}
\renewcommand{\Re}{\operatorname{Re}}
\title{Evolution of internal waves in a 2D subcritical channel}
\author{Zhenhao Li, Jian Wang, Jared Wunsch}
\date{}
\begin{document}

\begin{abstract}
    We study the long time evolution of internal waves in two dimensional subcritical channels with flat horizontal ends. We show the leading profiles of solutions are the outgoing solutions to the stationary equations. This is done by showing a limiting absorption principle and the differentiability of the spectral measure for a zeroth order operator.
\end{abstract}

\maketitle

\section{Introduction}
Let $\Omega \subset \R^2$ be an open domain. We consider the 2D linear internal waves equation forced periodically by a profile $f \in \CIc(\Omega; \R)$ at the time frequency $\lambda\in (0,1)$, which is given by 
\begin{equation}\label{eq:IW}
    (\partial_t^2 \Delta + \partial_{x_2}^2) u = f(x) \cos \lambda t, \quad u|_{t = 0} = \partial_t u|_{t = 0} = 0, \quad u|_{\partial \Omega} = 0.
\end{equation}
In this paper, we study the equation in an unbounded channel with flat horizontal ends. More precisely, we consider domains of the form
\begin{equation*}
    \Omega \coloneqq \{x \in \R^2: G(x_1) - \pi < x_2 < 0\} \quad \text{with} \quad G \in \CIc(\R; \R), \  G<\pi.
\end{equation*}
As in~\cite{LiWaWu:24}, we further make the assumption that $\Omega$ is $\lambda$-subcritical, meaning
\begin{equation}\label{eq:subcrit}
    \max_{x_1 \in \R} |G'(x_1)| < \frac{\sqrt{1 - \lambda^2}}{\lambda}.
\end{equation}
See \S\ref{sec:dynamics} for a detailed discussion of this condition.

Our main result describes the long time evolution for solutions to \eqref{eq:IW}.
\begin{theorem}\label{thm:tide}
    Suppose $\Omega$ is $\lambda$-subcritical and let $f \in \CIc(\Omega; \R)$. Then the solution to~\eqref{eq:IW} has the decomposition
    \[u(t) = \Re(e^{i \lambda t} u^+) + r(t) + \mathcal E(t)\]
    where $u^+ \in \bar H^{s, \beta}_0(\Omega)$ for any $s \ge 1$ and $\beta < -1/2$ (
    see~\S \ref{sssection:sobolev} for the definition of the weighted Sobolev space). Furthermore, we have a remainder uniformly bounded in energy space 
    \[\|r(t)\|_{H^1_0(\Omega)} \le C\]
    for some $C>0$ and all $t$, as well as an error term 
    \[\|\mathcal E(t)\|_{\bar H^{s, \beta}(\Omega)} \to 0\]
    as $t \to \infty$. 
\end{theorem}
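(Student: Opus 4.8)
The plan is to conjugate \eqref{eq:IW} into a forced evolution equation for a bounded, zeroth order self-adjoint operator, solve that equation explicitly through the spectral theorem, and then run the classical principle of limiting amplitude, with the limiting absorption principle and the differentiability of the spectral measure --- the two ingredients advertised in the abstract --- supplying the necessary control.

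Since $f$ is real, it suffices to solve \eqref{eq:IW} with $\cos\lambda t$ replaced by $e^{i\lambda t}$ and take real parts at the end. Let $-\Delta$ denote the nonnegative Dirichlet Laplacian on $\Omega$ and put $v \coloneqq (-\Delta)^{1/2}u$. Then, as in~\cite{LiWaWu:24}, equation \eqref{eq:IW} is equivalent to
\[
  (\pa_t^2 + \mathcal P)\, v = g\, e^{i\lambda t}, \qquad v|_{t=0} = \pa_t v|_{t=0} = 0,
\]
where $\mathcal P \coloneqq (-\Delta)^{-1/2}(-\pa_{x_2}^2)(-\Delta)^{-1/2}$ and $g \coloneqq -(-\Delta)^{-1/2}f$. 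Since $-\pa_{x_2}^2 \le -\Delta$ we have $0 \le \mathcal P \le I$, so $\operatorname{spec}(\mathcal P) \subset [0,1]$, and in the $\lambda$-subcritical regime this spectrum is purely continuous with $\lambda^2$ an interior, non-threshold point. Solving the ODE by the functional calculus of $\mathcal P$ gives the closed form
\[
  v(t) = \frac{e^{i\lambda t} - \cos(t\sqrt{\mathcal P})}{\mathcal P - \lambda^2}\, g \;-\; i\lambda\,\frac{\sin(t\sqrt{\mathcal P})}{\sqrt{\mathcal P}\,(\mathcal P - \lambda^2)}\, g,
\]
which makes sense because $\sigma \mapsto \bigl(e^{i\lambda t} - \cos(t\sqrt\sigma) - i\lambda\,\sigma^{-1/2}\sin(t\sqrt\sigma)\bigr)/(\sigma - \lambda^2)$ extends regularly across $\sigma = \lambda^2$.

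Passing to the spectral variable $\mu = \sqrt\sigma$ and writing $d\tilde E_\mu$ for the spectral measure of $\sqrt{\mathcal P}$, one decomposes the integrand of $v(t) = \int_0^1 (\cdots)\, d\tilde E_\mu\, g$ into the would-be-resonant term, proportional to $e^{i\lambda t}(\mu - \lambda)^{-1}\bigl(e^{i\lambda t} - e^{it\mu}\bigr)$, plus terms carrying the oscillatory factors $e^{\pm it\mu}$ against denominators bounded away from $0$ on $[0,1]$. Here the limiting absorption principle enters --- this is the step powered by the subcriticality~\eqref{eq:subcrit} through the absence of trapped trajectories for the internal wave billiard dynamics associated to $\mathcal P - \lambda^2$ (see~\S\ref{sec:dynamics}): the boundary value $(\mathcal P - \lambda^2 + i0)^{-1}$, with the sign fixed by the $e^{+i\lambda t}$ convention, exists as a bounded operator between the relevant weighted Sobolev spaces, so that
\[
  u^+ \coloneqq -(-\Delta)^{-1/2}(\mathcal P - \lambda^2 + i0)^{-1}(-\Delta)^{-1/2} f \in \bar H^{s,\beta}_0(\Omega) \quad \text{for all } s \ge 1,\ \beta < -1/2,
\]
and one recognizes $u^+$ as the outgoing solution of the stationary internal waves equation $\bigl((1 - \lambda^2)\pa_{x_2}^2 - \lambda^2\pa_{x_1}^2\bigr)u^+ = f$. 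That the would-be-resonant term converges to $e^{i\lambda t}(-\Delta)^{1/2} u^+$ follows by testing the distributional limit $\lim_{t\to\infty}(e^{itr} - 1)/r = -(r + i0)^{-1}$ against the density of $d\tilde E_\mu$ near $\mu = \lambda$ --- which, by the differentiability of the spectral measure, is $C^1$ (indeed as regular as needed) in $\mu$ with values in $\bar H^{s,\beta}$ --- and then upgrading weak to norm convergence by subtracting the value at $\mu = \lambda$ and applying the Riemann--Lebesgue lemma.

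It remains to organize the residual $u(t) - \Re(e^{i\lambda t}u^+)$ into $r(t)$ and $\mathcal E(t)$. On the interior of $(0,1)$ the oscillatory factors $e^{\pm it\mu}$ (and the error in the limiting-amplitude convergence just discussed) have no critical point, so repeated integration by parts in $\mu$, legitimate by the differentiability of the spectral measure, produces decay in $\bar H^{s,\beta}$; together with the observation that the part of $\Re(e^{i\lambda t}u^+)$ beyond $|x_1| \gtrsim t$ has $\bar H^{s,\beta}$-norm $o(1)$ when $\beta < -1/2$, this yields $\mathcal E(t) \to 0$ in $\bar H^{s,\beta}$. I expect the main obstacle to be the contribution of the spectral thresholds $\mu = 1$ --- the band edge of the flat channel, where the density of states carries a van Hove-type singularity --- and $\mu \to 0^+$: there the integration by parts fails, and one must use the fine behavior of $(\mathcal P - \mu^2 \pm i0)^{-1}$ near the thresholds in the subcritical channel (in particular, the absence of a threshold resonance) to show that this part of the residual stays \emph{bounded} in the energy norm $H^1_0(\Omega)$, uniformly in $t$, rather than growing --- this is the source of the remainder $r(t)$ with $\|r(t)\|_{H^1_0} \le C$. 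Taking real parts throughout then gives the stated decomposition.
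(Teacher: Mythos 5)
Your overall framework---conjugate to a bounded self-adjoint operator, solve by functional calculus, and extract the limiting amplitude via the limiting absorption principle and regularity of the spectral measure---is the paper's strategy (your symmetrized $\mathcal P$ is unitarily equivalent to the paper's $P$ on $H^{-1}$). But the execution has a genuine gap, and it appears precisely where you flag uncertainty.

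You do not localize the spectral parameter near $\lambda^2$ before estimating the residual, and as a result you manufacture a threshold problem at $\mu=0$ and $\mu=1$ that the paper never has to face. The LAP and the $C^1$ regularity of the spectral measure (Propositions~\ref{prop:LAP} and~\ref{prop:LAP_diff}) are established only on a subcritical interval $\mathcal J\subset(0,1)$; the paper proves nothing about $(\mathcal P-\mu^2\pm i0)^{-1}$ near the band edges, and nothing about ``absence of threshold resonance.'' Instead, the paper inserts a cutoff $\varphi\in\CIc$ with $\varphi\equiv 1$ near $\lambda^2$, and observes the elementary pointwise bound
\begin{equation*}
\bigl|\Re\bigl(e^{i\lambda t}\mathbf W_{t,\lambda}(z)\bigr)\bigr|\,(1-\varphi(z))
=\frac{\bigl|\cos(t\sqrt z)-\cos(t\lambda)\bigr|\,(1-\varphi(z))}{|z-\lambda^2|}\le \frac{2}{\delta}
\end{equation*}
uniformly in $t\ge0$ and $z\in[0,1]$, including $z=0,1$. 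The spectral theorem then bounds the non-resonant piece $r_1(t)$ in the energy norm with no resolvent input whatsoever. Note also that once you take real parts (and $f$, hence $g$, is real, and $\mathcal P$ is real self-adjoint), the $\sin(t\sqrt{\mathcal P})/(\sqrt{\mathcal P}(\mathcal P-\lambda^2))$ term in your explicit formula simply disappears, so the apparent $1/\sqrt\sigma$ singularity at $\sigma=0$ is not there; your worry about $\mu\to 0^+$ stems from deferring $\Re$ to the end.

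The second issue is smaller but worth naming: ``repeated integration by parts in $\mu$, legitimate by the differentiability of the spectral measure'' presumes more regularity than the paper proves. Propositions~\ref{prop:LAP} and~\ref{prop:LAP_diff} give exactly one derivative of $z\mapsto E(z)f$ on $\mathcal J^2$. The paper's convergence argument for $u_1(t)$ therefore does not integrate by parts repeatedly; it rewrites the oscillatory integral as $-i\int_{-t}^0\widehat q_+(s)\,\md s+i\int_0^t\widehat q_-(s)\,\md s$ with $q_\pm\in C^1_c(\R;\bar H^{s,\beta}_0(\Omega))$, uses $H^{1/2+\alpha_0}\hookrightarrow \F L^1$ to get $\widehat q_\pm\in L^1$, and concludes by dominated convergence. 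That only needs the $C^1$ regularity actually established. If you repair your argument by inserting the spectral cutoff $\varphi$ at the outset, the threshold obstruction vanishes, and the remaining convergence can be run with the $C^1$ input, essentially recovering the paper's proof.
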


We approach the evolution problem using spectral methods, similar to the setups in \cite{DyWaZw:21, Li:23, Li:24}. Since $\Omega$ is bounded in the $x_2$ direction, the Dirichlet Laplacian has an inverse $\Delta_\Omega^{-1} :H^{-1}(\Omega) \to H^1_0(\Omega)$. Then we can replace the internal waves equation~\eqref{eq:IW} by 
\begin{equation}\label{eq:IWP}
    (\partial_t^2 + P) w = f \cos \lambda t, \quad w|_{t = 0} = \partial_t w|_{t = 0} = 0, \quad f \in \CIc(\Omega; \R), \quad u = \Delta_{\Omega}^{-1} w, 
\end{equation}
where
\begin{equation}\label{eq:0pseudo}
    P \coloneqq \partial_{x_2}^2 \Delta_{\Omega}^{-1}: H^{-1}(\Omega)\to H^{-1}(\Omega).
\end{equation}
Therefore, studying the evolution problem becomes a problem of understanding the spectrum of $P$. Equip the Hilbert space $H^{-1}(\Omega)$ with the norm 
\[\langle u, w \rangle_{H^{-1}(\Omega)} \coloneqq \langle \nabla \Delta_\Omega^{-1} u, \nabla \Delta_{\Omega}^{-1} w \rangle_{L^2(\Omega)}.\]
Then it can be shown that $P: H^{-1}(\Omega) \to H^{-1}(\Omega)$ is non-negative, bounded, and self-adjoint with $\mathrm{Spec}(P) = [0, 1]$. The solution to the equation~\eqref{eq:IWP} can then be expressed using the functional calculus of $P$:
\begin{equation}\label{eq:functional_sol}
    \begin{gathered}
        w(t) = \Re\big(e^{i \lambda t} \mathbf W_{t, \lambda} (P) f\big), \\
        \mathbf W_{t, \lambda} (z) \coloneqq \int_0^t \frac{\sin(s \sqrt{z})}{\sqrt{z}} e^{-i \lambda s}\, \md s = \sum_\pm \frac{1 - e^{-i t(\lambda \pm \sqrt{z})}}{2 \sqrt{z}(\sqrt{z} \pm \lambda)}.
    \end{gathered}
\end{equation}
As $t \to \infty$, we see that 
\begin{equation}\label{Wlim}
\mathbf W_{t, \lambda}(z) \to (z - \lambda^2 + i0)^{-1} \quad \text{in} \quad \mathcal D'_z((0, \infty)).
\end{equation}
See \cite[\S 1]{DyWaZw:21}. Therefore, our goal is to develop a \emph{limiting absorption principle}, that is, to understand the limit 
\begin{equation}\label{limitingresolvent}\lim_{\epsilon \to 0} (P - (\lambda\pm i \epsilon)^2)^{-1} f. \end{equation}
This limit is needed both in Stone's formula to understand the spectral measure, and to give the limiting profile $\lim_{t \to \infty} \mathbf W_{t, \lambda}(P) f$ of the internal waves equation. In terms of the spectral measure, we have the following theorem. 
\begin{theorem}\label{thm:spectral}
    Let $\mathcal J \subset [0, 1]$ be an open interval such that $\Omega$ is $\lambda$-subcritical with respect to every $\lambda \in \mathcal J$. Let $u_{\lambda \pm i0}$ be the unique incoming (or outgoing) solutions to the stationary internal wave equation (see \S\ref{sec:dynamics} or \cite{LiWaWu:24} for details). Then 
    \[\lim_{\epsilon \to 0+} \Delta_{\Omega}^{-1} (P - \lambda^2 \mp i\epsilon)^{-1} f = u_{\lambda \pm i0} \quad \text{in $\bar H^{s, \beta}_0(\Omega)$}\]
    for every $s \ge 1$ and $\beta < -1/2$. In particular, $\Spec(P)$ is purely absolutely continuous in $\mathcal J^2 = \{\lambda^2 : \lambda \in \mathcal J\}$.
\end{theorem}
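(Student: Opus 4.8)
The plan is to recast the resolvent problem for the zeroth order operator $P$ as the stationary internal wave equation with a vanishing complex absorption, to establish a Fredholm framework and a uniform limiting absorption estimate for the latter on the weighted spaces $\bar H^{s,\beta}$ with $\beta<-1/2$, and then to let $\epsilon\to0$ and identify the limit with the outgoing (or incoming) stationary solution supplied by \cite{LiWaWu:24}. For the reduction: since $\Delta_\Omega:H^1_0(\Omega)\to H^{-1}(\Omega)$ is invertible and $P=\partial_{x_2}^2\Delta_\Omega^{-1}$, the function $u_\epsilon:=\Delta_\Omega^{-1}(P-\lambda^2\mp i\epsilon)^{-1}f\in H^1_0(\Omega)$ is the unique $H^1_0(\Omega)$ solution of
\[
\bigl(\partial_{x_2}^2-(\lambda^2\pm i\epsilon)\Delta\bigr)u_\epsilon=f,\qquad u_\epsilon|_{\partial\Omega}=0,
\]
that is, $\mathcal P_\lambda u_\epsilon\mp i\epsilon\,\Delta u_\epsilon=f$ with $\mathcal P_\lambda:=\partial_{x_2}^2-\lambda^2\Delta=(1-\lambda^2)\partial_{x_2}^2-\lambda^2\partial_{x_1}^2$. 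Thus the limit \eqref{limitingresolvent} is exactly the limiting absorption problem for the hyperbolic (non-elliptic) operator $\mathcal P_\lambda$, in which $\mp i\epsilon\,\Delta$ is a small, sign-definite (as checked below) complex absorption that selects the radiation condition.

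Next I would analyze the model at infinity. On $\{|x_1|>R\}$ the domain is the flat strip $\RR\times(-\pi,0)$; expanding $u=\sum_{n\ge1}u_n(x_1)\sin\bigl(n(x_2+\pi)\bigr)$ diagonalizes $\Delta_\Omega$ and reduces the equation on each transverse mode to a one-dimensional Helmholtz equation $\bigl(\partial_{x_1}^2+k_n^2\bigr)u_n=-\lambda^{-2}f_n$ with $k_n=n\sqrt{1-\lambda^2}/\lambda$. The subcriticality condition \eqref{eq:subcrit}, which holds vacuously on the flat ends and elsewhere holds locally uniformly for $\lambda\in\mathcal J$, guarantees $k_n^2\ge c\,n^2$ for a fixed $c>0$ and all $n\ge1$: every transverse mode is propagating, and the limiting resolvent imposes a Sommerfeld radiation condition $u_n\sim a_n^\pm e^{\pm ik_nx_1}$ as $x_1\to\pm\infty$, with signs fixed by the resolvent (upon separation the $\mp i\epsilon\,\Delta$ term shifts $k_n^2$ by a sign-definite $O(\epsilon)$ into the complex plane, hence picks out this condition). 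I would build the model resolvent mode by mode, verify that it maps $\CIc$ into $\bar H^{s,\beta}$ for all $s$ and all $\beta<-1/2$ but not into $\bar H^{s,-1/2}$ (the $L^2$ threshold, visible from the non-decay of $e^{\pm ik_nx_1}$), and record its limiting absorption principle with the dependence on $n$ made explicit.

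Then I would assemble a Fredholm framework for the limiting operator $\mathcal P_{\lambda\mp i0}$. Gluing the model resolvent on $\{|x_1|>R\}$ to a microlocal parametrix for $\mathcal P_\lambda$ over the compact region $\overline\Omega\cap\{|x_1|\le R\}$---constructed using propagation of singularities for the real-principal-type operator $\mathcal P_\lambda$ along its null-bicharacteristics, reflected off $\partial\Omega$ (the internal wave, or chess, billiard)---gives an approximate inverse of $\mathcal P_{\lambda\mp i0}$ from an outgoing (or incoming) space $\mathcal X^\pm$ (weighted, or of variable order near the radial sets over $x_1=\pm\infty$) into a target $\mathcal Y$, with compact error. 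The essential analytic input is a propagation/radial-point estimate whose hypothesis is the \emph{non-trapping} of the subcritical billiard flow established in \cite{LiWaWu:24} (compare the frameworks of \cite{DyWaZw:21,Li:23,Li:24}); it yields that $\mathcal P_{\lambda\mp i0}:\mathcal X^\pm\to\mathcal Y$ is Fredholm of index $0$ and, together with the corresponding estimate for the $\epsilon>0$ operator (where the sign-definite absorption only helps), the uniform bound $\|u_\epsilon\|_{\bar H^{s,\beta}}\le C\|f\|$ for $\beta<-1/2$ as soon as $\mathcal P_{\lambda\mp i0}$ is injective on $\mathcal X^\pm$. Injectivity is precisely the uniqueness of outgoing (or incoming) stationary solutions from \cite{LiWaWu:24}: a boundary-pairing (Rellich) identity for $\mathcal P_\lambda$ forces the radiated flux $\sum_n k_n|a_n^\pm|^2$ to vanish, so any outgoing solution of $\mathcal P_\lambda u=0$ is compactly supported, and then the non-trapping of the billiard flow excludes nonzero compactly supported solutions---equivalently, embedded eigenvalues of $P$ in $\mathcal J^2$ (note an $H^{-1}$-eigenvector of $P$ is automatically compactly supported, since each mode satisfies $u_n''+k_n^2u_n=0$ on the flat ends and must lie in $L^2$).

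Finally I would pass to the limit: the uniform bound, weak compactness in $\bar H^{s,\beta}$, stability of the radiation condition under $\epsilon\to0$, and the uniqueness above give $u_\epsilon\to u_{\lambda\pm i0}$ in $\bar H^{s,\beta}_0(\Omega)$ for all $s\ge1$ and $\beta<-1/2$, which is the assertion; by construction the limit is an outgoing (or incoming) solution of $\mathcal P_\lambda u=f$, hence coincides with the $u_{\lambda\pm i0}$ of \S\ref{sec:dynamics}. Absolute continuity of $\mathrm{Spec}(P)$ on $\mathcal J^2$ then follows from Stone's formula in $H^{-1}(\Omega)$: the spectral density of $f$, furnished as a boundary value of $\tfrac{1}{\pi}\Im\langle(P-\lambda^2\mp i0)^{-1}f,f\rangle_{H^{-1}(\Omega)}$, exists and depends continuously on $\lambda\in\mathcal J$ (the inverse $\mathcal P_{\lambda\mp i0}^{-1}$ varies continuously in $\lambda$, with no exceptional values by injectivity), so $\mathbf 1_{\mathcal J^2}(P)$ carries no singular part. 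The main obstacle is the third step: proving the propagation-of-singularities and radial-point estimates for the degenerate, non-elliptic operator $\mathcal P_\lambda$ in the presence of the Dirichlet boundary and the compactly supported deformation $G$, uniformly in $\lambda\in\mathcal J$---this is exactly where \eqref{eq:subcrit} enters, through the non-trapping of the internal wave billiard---and, secondarily, the uniform-in-$n$ matching of the transverse-mode radiation conditions at $x_1\to\pm\infty$ to the single global outgoing (or incoming) space.
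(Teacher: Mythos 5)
Your skeleton matches the paper's: reduce the resolvent limit to the stationary equation $\mathcal P_\lambda u\mp i\epsilon\Delta u=f$; obtain a uniform estimate in $\bar H^{s,\beta}$ for $\beta<-1/2$; use compactness plus the injectivity/uniqueness result of \cite{LiWaWu:24} to identify the weak limit with the outgoing (or incoming) stationary solution; and invoke Stone's formula for the absolute-continuity conclusion. The mode-by-mode Helmholtz picture in the flat ends, the identification of the $-1/2$ threshold, and the role of the subcritical billiard in the injectivity argument are all consonant with Propositions~\ref{prop:LAP}--\ref{prop:LAP_diff} and the paper's proof of Theorem~\ref{thm:spectral}.

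Where you genuinely diverge is in the mechanism producing the uniform estimate. You propose a direct 2D propagation-of-singularities and radial-point argument for the real-principal-type, boundary-value operator $\mathcal P_\lambda$, gluing an explicit model resolvent at $x_1=\pm\infty$ to an interior parametrix organized around the chess billiard. The paper instead performs a layer-potential boundary reduction: because $P(\omega)$ is constant-coefficient, identity \eqref{eq:single_layer} collapses the problem to the one-dimensional boundary operator $\md\mathcal C_\omega$, whose kernel is computed explicitly in Proposition~\ref{prop:kernel} as a Hilbert transform plus pullbacks under $\gamma^\pm$ plus uniformly residual terms; propagation then happens purely on $\partial\Omega$ via the map $b_\lambda$ (Lemmas~\ref{lem:bb_prop}, \ref{lem:interior_prop}), and the radial-point input is replaced by elementary Fourier-series estimates on the explicit end solution \eqref{eq:u_series} (Propositions~\ref{prop:high_decay}, \ref{prop:low_decay}). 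Your route is in principle viable, but it is harder in this setting: $\mathcal P_\lambda$ is hyperbolic rather than elliptic, so one needs a full Melrose--Sj\"ostrand boundary propagation theory (subcriticality giving absence of glancing points), a uniform-in-$\epsilon$ radial estimate at $x_1=\pm\infty$, and the uniform-in-$n$ matching of transverse radiation conditions that you yourself flag. The boundary reduction sidesteps all three by exploiting the constant-coefficient structure.

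The genuine gap is that none of the propagation, radial-point, or Fredholm estimates is actually established; you flag this as the main obstacle, but it is not a secondary detail---it is the whole technical content of \S\S\ref{sec:end}--\ref{sec:global}. As written the proposal is a correct roadmap rather than a proof. Two smaller points: (i) the Rellich-identity claim that a homogeneous outgoing solution is compactly supported is essentially the argument underlying Proposition~\ref{prop:uniqueness}, and is fine, but needs the weighted pairing to be carried out carefully since the outgoing modes themselves do not decay; (ii) mere continuity of $\lambda\mapsto u_{\lambda\pm i0}$ suffices for the absolute-continuity claim of Theorem~\ref{thm:spectral}, so your last paragraph is adequate for that, but the paper proves $C^1$ regularity (Proposition~\ref{prop:LAP_diff}, requiring the differentiated estimates of Propositions~\ref{prop:high_decay_deriv}, \ref{prop:low_decay_deriv}, \ref{prop:SFE_diff}) because that is what is needed for the companion evolution Theorem~\ref{thm:tide}; your framework would need the analogous differentiated propagation statements to serve both results.
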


\begin{Remark}
    In fact, we will prove a more precise version of this result in Propositions~\ref{prop:LAP} and~\ref{prop:LAP_diff}, where we will show that the spectral measure is differentiable. The more precise version will then imply Theorem~\ref{thm:tide} via the functional solution~\eqref{eq:functional_sol}. In particular, we will see that the limiting profile in Theorem~\ref{thm:tide} is given by $u^+ = u_{\lambda - i0}$. In other words, up to errors bounded in energy, the internal wave equation will converge to the outgoing solution to the stationary internal wave equation.
\end{Remark}

\begin{figure}[t]
    \centering
    \includegraphics{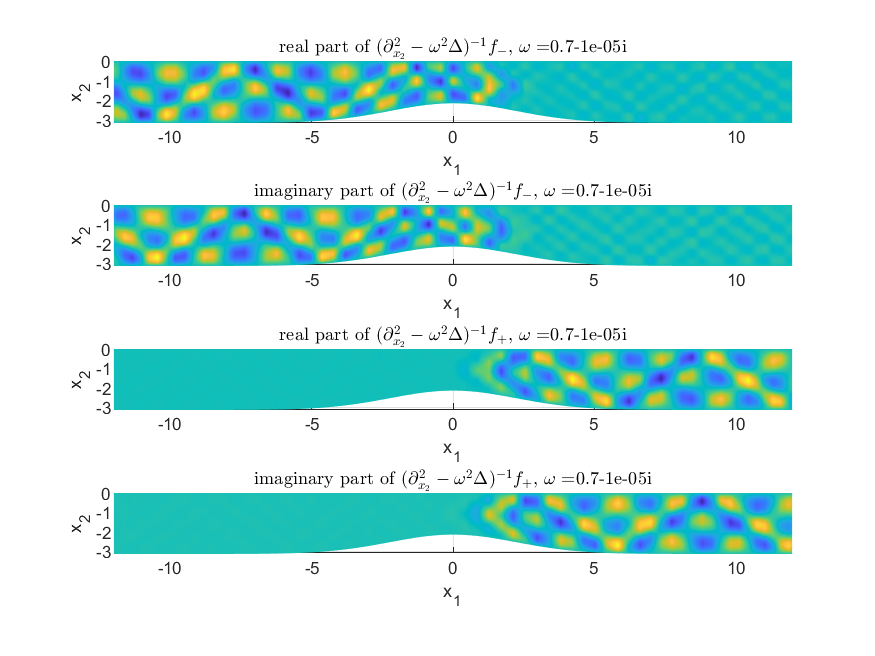}
    \caption{Numerical computation of $u_\omega=P(\omega)^{-1}f_{\pm}$ (see \eqref{eq:P(w)u} for the definition) with $\omega=0.7-10^{-5}i$, $f_{\pm}(x) = a(x)e^{\pm 5ix_1}$ and $a$ is a Gaussian function in $y$ coordinates (introduced in Remark below) centered at $y=(0.1,0)$. }
    \label{fig:sub_osc}
\end{figure}

\begin{Remark}
    Let us describe the numerical method used to compute Figure \ref{fig:sub_osc}. The goal is to approximate $u_\omega$ in $\Omega_L\coloneqq\Omega\cap \{|x_1|<L\}$ for given $L>0$. We first introduce change of coordinates
    \[ \Omega_L\ni (x_1,x_2)\mapsto (y_1,y_2)\coloneqq \left( \frac{x_1}{L}, 1+\frac{2x_2}{\pi-G(x_1)} \right)\in (-1,1)\times (-1,1). \]
    In order to approximate outgoing solutions in the {\em unbounded} domain $\Omega$ by solutions in a {\em bounded} domain with Dirichlet boundary condition, we introduce the complex scaling in the $y_1$ variable (see \cite[\S 2.7]{DyZw:19} for an introduction of complex scaling in the context of Schr\"odinger operator on the real line). More precisely, we replace $(-1,1)_{y_1}$ by a complex curve 
    \[ \Gamma_{\tau}\coloneqq \{ \gamma_\tau(y_1) : y_1\in (-1,1)\}, \ \gamma_\tau(y_1)\coloneqq y_1(1+i\tau \rho(y_1)), \ \tau>0 \]
    with $\rho\in C^{\infty}([-1,1];\RR)$ and for some $\delta>0$, in $y$-coordinates,
    \[\begin{gathered}  
    \supp G' \text{ and } y_1\text{-projection of } \supp f \text{ are contained in } [-1+\delta,1-\delta], \\
    \rho|_{[-1+\delta,1-\delta]}= 0, \ 
    \rho= 1 \text{ near } y_1=\pm 1.
    \end{gathered}\]
    Let $P_y(\omega)$ be $P(\omega)$ in $y$-coordinates. Then $P_y(\omega)$ can be holomorphically extended to an operator $P_{\Gamma}(\omega)$ on $\Gamma_\tau\times(-1,1)$. We then approximate $u_{\omega}$ in $\Omega_{(1-\delta)L}$ by the solution $u_{\omega, \Gamma}$ to 
    \begin{equation}\label{eq:complex}
    P_{\Gamma}(\omega) u_{\omega,\Gamma}=f, \ u_{\omega,\Gamma}|_{\partial(\Gamma_\tau\times(-1,1))}=0. 
    \end{equation}
    We compute $P_\Gamma(\omega)$ explicitly 
    \[\begin{split}
    P_{\Gamma}(\omega) =  -\omega^2 & \left[ \tfrac{1}{L^2}\partial_{z_1}^2+\tfrac{1}{L}\left( \tfrac{G'(Lz_1)}{\pi-G(Lz_1)} \right)'_{z_1} (z_2-1)\partial_{z_2}+\tfrac{2}{L}\tfrac{G'(Lz_1)}{\pi-G(Lz_1)}\partial_{z_1} (z_2-1)\partial_{z_2} \right.\\
    & \ \ \left.+ \left( \tfrac{G'(Lz_1)}{\pi-G(Lz_1)} \right)^2 \left( (z_2-1)\partial_{z_2} \right)^2 \right]+\tfrac{4(1-\omega^2)}{(\pi-G(Lz_1))^2} \partial_{z_2}^2.
    \end{split}\]
    One can further use the parametrization $(z_1, z_2)=(\gamma_\tau(y_1), y_2)$ and calculate $P_{\Gamma}(\omega)$ in $(y_1, y_2)$ by replacing $z_1, z_2, \partial_{z_1}, \partial_{z_2}$ with $\gamma_\tau(y_1), y_2, \gamma_\tau'(y_1)^{-1}\partial_{y_1}, \partial_{y_2}$ respectively. It remains to solve \eqref{eq:complex} numerically using Chebyshev differentiation matrices, see \cite{Tr:00}. In Figure \ref{fig:sub_osc}, we choose $L=15$, $\tau=0.5$, and 
    \[\begin{split} 
    G(x_1) = e^{-\frac{1}{10}x_1^2}, \
    \rho(y_1) = 0.5\times(2+\tanh{(20(y_1-0.9))}-\tanh{(20( y_1+0.9 ))} ). 
    \end{split}\]
\end{Remark}

\subsection{Relation to oceanography literature}

Equation \eqref{eq:IW} describes the generation of linear internal tides in the ocean, ignoring the Coriolis force (i.e., the rotation of the Earth). The driving force of internal tides is provided by the barotropic tidal flow over the topography. In \cite{Ba:82}, the effect of barotropic tidal flow is modeled by a buoyancy force and the forcing term takes the form (see \cite[(2.7)]{Ba:82} with $f=0$ there) 
\[ QN(x_2)^2 x_2 \partial_{x_1}^2\left( \tfrac{1}{\pi-G(x_1)} \right) \cos(\lambda t) \]
where $Q$ is a constant, $N(x_2)$ is the buoyancy frequency describing the density stratification. Baines looked for standing wave solutions $u(t,x)=U(x)\cos(\lambda t)$, where $U(x)$ solves the stationary equation with the ``radiation conditions'' for $|x_1|$ large. The radiation condition excludes incoming waves traveling towards the bottom topography, that is, the ``generation region''. The standing wave solution is exactly the leading profile $\mathrm{Re}(e^{i\lambda t} u^+)$ that we defined in Theorem \ref{thm:tide}. However, the standing wave does not satisfy the vanishing initial value conditions, nor did Baines prove the decomposition in Theorem \ref{thm:tide} for the initial value problem. In particular, the proof of Theorem \ref{thm:tide} implies that it takes infinite time for the full development of the leading outgoing profile.

Notice that Baines' model assumed that the barotropic flow is hydrostatic (hence it only depends on $x_2$). A model for baroclinic tidal flows, valid under both hydrostatic and non-hydrostatic assumptions, is derived in \cite[(17)--(19)]{GaGe:07}. We remark that it is also possible to model the forcing through boundary conditions, and we refer to \cite[Chapter 2]{VlStHu:05} for details and references.

Finally, we remark that it is possible to include the Coriolis force in our analysis. In that case, the governing linear equation is 
\[ \left( \partial_t^2\Delta + \partial_{x_1}^2+a^2 \partial_{x_2}^2 \right)u(t,x) = f(x)\cos{\lambda t}, \ u|_{t=0}=\partial_t u|_{t=0}=0, \ u|_{\partial\Omega}=0 \]
with $a>0$ and $a\neq 1$.
Upon assuming 
\[ \min\{1,a\}<\lambda<\max\{1,a\} \text{ and } \max_{x_1\in \R}|G'(x_1)|<\left( \tfrac{\lambda^2-a^2}{1-\lambda^2} \right)^{\frac12}, \]
our analysis still applies.

\subsection{Structure of the paper}
The manuscript is organized in the following way.
In \S \ref{sec:pre} we review necessary microlocal analysis tools, in particular the scattering calculus. We also review the related chess billiard dynamics. We give an alternative characterization of incoming/outgoing solutions to stationary internal wave equations, which is more convenient for our analysis compared to the (equivalent) definition in \cite{LiWaWu:24}. In \S \ref{sec:end} we prove end estimates, showing the Neumann data for solutions to the stationary internal wave equation \eqref{eq:P(w)u} near ends of the channel can be controlled by the solution in a compact region of the bulk of the channel. In \S \ref{sec:reduction} we introduce boundary reduced operators. In particular, we show the microlocal structure of the restricted single layer potential $\mathrm d\mathcal C_\omega$. In \S \ref{sec:global} we prove propagation estimates in compact regions for boundary reduced operators. Combining end estimates in \S \ref{sec:end}, we obtain global high frequency estimates. In \S \ref{sec:LAP} we show a limiting absorption principle for solutions to the stationary equation \eqref{eq:P(w)u} as $\Im \omega\to 0+$ and prove the main results Theorem \ref{thm:tide} and \ref{thm:spectral}.

\vspace{8pt}
\noindent
{\bf Acknowledgments.} We would like to thank Semyon Dyatlov and Maciej Zworski for helpful discussions on this project. Z.~Li acknowledges partial support from NSF grant DMS-2400090. J.~Wang is supported by the Simons Foundation through a postdoctoral position at the Institut des Hautes \'Etudes Scientifiques.
J.~Wunsch acknowledges partial support from NSF grants DMS-2054424 and DMS-2452331 and from Simons Foundation grant MPS-TSM-00007464.

\section{Preliminaries}\label{sec:pre}
We first recall some convenient microlocal machinery used in this paper. In particular, we recall some basic properties of the scattering pseudodifferential calculus, which readily gives us mapping properties between weighted Sobolev spaces and allows us to localize to frequencies near spatial infinity. We refer the reader to~\cite{Va:18} for a complete treatment of the scattering calculus. Then we highlight the key uniqueness result from~\cite{LiWaWu:24} needed to develop a limiting absorption principle. 
\subsection{Microlocal analysis}
We say that $a \in C^\infty(T^* \R)$ is a scattering symbol of order-($m,l$) for $m,l\in \R$, denoted $a \in S^{m, l}(T^*\R)$, if it satisfies the estimates
\[|\partial_{\theta'}^\alpha \partial_{\xi}^\beta a(\theta', \xi)| \le C_{\alpha, \beta} \langle \xi \rangle^{m - \beta} \langle \theta' \rangle^{l - \alpha}, \ \text{ for all } (\theta',\xi)\in T^*\R, \ \alpha,\beta\in \mathbb N_0.\]
Here $\langle \xi \rangle:=\sqrt{ 1+\xi^2 }$.
The residual class of the scattering symbols is denoted by
\[S^{-\infty, -\infty}(T^* \R) \coloneqq \bigcap_{m, l \in \R} S^{m, l}(T^* \R).\]
The class of order-$(m, l)$ scattering pseudodifferential operators on $\R$, denoted $\Psi^{m, l}(\R)$, are given by the quantization of $a \in S^{m, l}(T^*\R)$
\[\Op(a)u(\theta) \coloneqq \frac{1}{2\pi} \iint e^{i(\theta - \theta') \xi} a(\theta', \xi) u(\theta')\, \md\theta' \md\xi.\]
We note that this is the \textit{right} quantization as opposed to the more commonly seen left quantization. This is to simplify some symbol computations in \S\ref{sec:kernel}.

It is clear that the action of these operators on $\CIc(\R)$ is well-defined. They further extend to operators 
\[\Op(a): H^{s, \alpha}(\R) \to H^{s - m, \alpha - l}(\R).\]
Here, $H^{s, \alpha}(\R)$ denote the weighted Sobolev spaces defined by 
\begin{equation}\label{eq:weighted_sob}
    H^{s, \alpha}(\R) \coloneqq \{\langle x \rangle^\alpha u: u \in H^s(\R)\}, \qquad \|u\|_{H^{s, \alpha}(\R)} \coloneqq \|\langle x \rangle^\alpha u\|_{H^s(\R)}.
\end{equation}
Note that $H^{s, \alpha}(\R) \hookrightarrow H^{s', \alpha'}(\R)$ compactly provided that $s > s'$, $\alpha > \alpha'$. 
A subtler feature of the pseudodifferential calculus is that if $A \in \Psi^{m,l}(\R)$ is globally \emph{elliptic} in the sense that $A=\Op(a)$, with $a$ invertible in $S^{m,l}(T^*\R)$ modulo $S^{m-1,l-1}(T^*\R)$, then 
$$
Au\in L^2 (\R) \Longleftrightarrow u \in H^{m,l}(\R).
$$

Two important operations of scattering pseudodifferential operators are composition and change of variables. Let $A = \Op(a) \in \Psi^{m, l}(\R)$ and $B = \Op(b) \in \Psi^{m', l'}(\R)$. Then $A \circ B \in \Psi^{m + m', l + l'}(\R)$. Furthermore, $A \circ B = \Op(c)$ where $c$ is given by the asymptotic expansion
\begin{equation}\label{eq:composition}
    c(\theta', \xi) \sim \sum_{k} \frac{((i\partial_{\theta'})^k a(\theta', \xi)) (\partial_\xi^k b(\theta', \xi))}{k!}.
\end{equation}
Let $\varkappa: \R \to \R$ be a diffeomorphism such that $\varkappa'(\theta) = 1$ for $|\theta| > M$ for some $M > 0$. Then 
\[\varkappa^{*} A \varkappa^{-*} = \Op(\tilde a) \in \Psi^{m, \ell}(\R),\]
where $\varkappa^{-*} := (\varkappa^{-1})^*$, and $\tilde a$ has the asymptotic expansion
\begin{equation}\label{eq:cov}
    \tilde a(\theta', \xi) \sim \sum_{k = 0}^\infty (L_{k} a)(\varkappa(\theta'), \varkappa'(\theta')^{-1} \xi)
\end{equation}
where $L_{k}$ are order $2k$ differential operators of the form 
\[L_0 = \Id, \qquad L_k = \sum_{\ell = 0}^k c_\ell(\theta', \xi) \partial_{\xi}^{\ell + k} \quad \text{for $k \ge 1$,}\]
where $c_\ell(\theta', \xi) \in C^\infty(\R^2)$ are order-$\ell$ symbols in $\xi$ and compactly supported in $\xi$. In particular, note that for every $k \ge 1$, the $k$-th term in the expansion~\eqref{eq:cov} is an order $k$ symbol in $\xi$ and compactly supported in $\theta'$ assuming that $\varkappa'(\theta) = 1$ for $|\theta| > M$.

\subsubsection{Cutoff symbols}
Note that the Schwartz kernel of an operator $A \in \Psi^{s, 0}(\R)$ has singular support contained in the diagonal $\{\theta = \theta'\}$ and satisfies the off-diagonal estimate for all $\alpha, \beta\in \mathbb N_0$
\begin{equation}\label{eq:rapid_off_diag}
    |\partial_\theta^\alpha \partial_{\theta'}^\beta A(\theta, \theta')| \le C_{N, \alpha, \beta} \langle \theta - \theta'\rangle^{-N} \text{ for all } N\in \R \ \text{when} \ |\theta - \theta'| \ge 1.
\end{equation}
This rapid off-diagonal decay is an important feature to ensure that the action on weighted Sobolev spaces makes sense. The off-diagonal decay comes from the fact that the symbol is smooth and gains decay when differentiated in $\xi$. However, we also need to consider slightly more exotic symbols that are singular near the zero section of $T^*\R$. Let $z \in C^\infty(\R)$ be such that 
\[\partial_{\theta'} z(\theta') = 0 \quad \text{for} \quad z \ge M\]
for some $M > 0$. Consider the $\epsilon$-dependent family of operators $\Op(b)$ where
\begin{equation}\label{eq:cutoff_symbol}
    b(\theta', \xi) = H(\xi) e^{-\epsilon|\xi| z(\theta')}, \ \epsilon>0
\end{equation}
where $H$ denotes the Heaviside function. To analyze such a symbol, we split it into two parts using smooth cutoff functions
\begin{equation}\label{eq:freq_cutoff}
    h_+ \in C^\infty(\R; \R), \quad \supp h_+ \subset (0, \infty], \quad h_+(\xi) = 1 \quad \text{when} \quad \xi \ge c_\lambda.
\end{equation}
and $h_-(\xi) \coloneqq h_+(-\xi)$.
We denote the corresponding frequency projections by 
\begin{equation}\label{eq:proj}
    \widetilde \Pi_\pm = \Op(h_\pm) \in \Psi^{0, 0}(\R).
\end{equation}
The tilde here is to emphasize that the Fourier character of this projection is smooth.

Henceforth, we will often use the same letter to denote both the operator and its Schwartz kernel. We define a modified residual class $\Psi_{\mathrm{res}}(\R) $ of operators such that $R \in \Psi_{\mathrm{res}}(\R)$ if and only if $R \in C^\infty(\R \times \R)$ and for all $\alpha, \beta\in \mathbb N_0$
\begin{equation}\label{eq:kernel_decay}
        |\partial_\theta^\alpha \partial_{\theta'}^\beta R(\theta, \theta')| \le C_{\alpha, \beta} \langle \theta - \theta' \rangle^{-1} \text{ for all } \theta, \theta'\in \RR.
    \end{equation}
If $R = R_\epsilon$ is an $\epsilon$-dependent family, then we say that $R \in \Psi_{\mathrm{res}}(\R)$ uniformly if and only if the family satisfies~\eqref{eq:kernel_decay} uniformly. Note that for every $\chi \in \CIc(\R)$ and $N \in \R$, we have the mapping property
\[\chi R \chi : H^{-N} \to H^N.\]
The mapping property is clearly uniform if $R = R_\epsilon \in \Psi_{\mathrm{res}}(\R)$ uniformly. 

The precise rate of off-diagonal decay of the Schwartz kernel of operators in $\Psi_{\mathrm{res}}(\R)$ is actually not so important for us, but we need some decay so that the following holds. 

\begin{lemma}\label{lem:residual}
Suppose $R \in \Psi_{\mathrm{res}}(\R)$, then for any $A \in \Psi^{s, 0}(\R)$, 
\[AR,\, RA \in \Psi_{\mathrm{res}}(\R).\]
\end{lemma}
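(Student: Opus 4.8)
The plan is to prove the statement for $RA$ by composing Schwartz kernels and exploiting the fact that $A \in \Psi^{s,0}(\R)$ has a kernel with two distinct regimes of behavior: rapid off-diagonal decay away from the diagonal, and only local (tempered, not necessarily integrable near the diagonal) singularities on the diagonal. The case $AR$ follows either by the same argument or by taking (formal) adjoints, since $\Psi_{\mathrm{res}}(\R)$ and $\Psi^{s,0}(\R)$ are essentially stable under adjoint (up to possibly enlarging $s$), so I would do $RA$ carefully and remark that $AR$ is symmetric.

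First I would write $(RA)(\theta, \theta') = \int R(\theta, \theta'') A(\theta'', \theta')\, \md \theta''$, understood distributionally in $\theta''$ if necessary, and split the $\theta''$-integral using a partition of unity adapted to the regions $|\theta'' - \theta'| \le 1$ and $|\theta'' - \theta'| \ge 1$. On the far region, the kernel of $A$ satisfies the rapid decay estimate \eqref{eq:rapid_off_diag}, so that piece of the integral is smooth and, paired against the $\langle \theta - \theta'' \rangle^{-1}$ decay of $R$ and the bound $\langle \theta'' - \theta' \rangle^{-N}$ of $A$, produces a contribution that decays like $\langle \theta - \theta'\rangle^{-1}$ after a standard Peetre-inequality splitting of the integration variable (split $\int_{|\theta''-\theta'|\ge 1}$ further into $|\theta'' - \theta| \le \tfrac12 |\theta - \theta'|$ and its complement; on the first, $\langle \theta'' - \theta'\rangle \gtrsim \langle \theta - \theta'\rangle$, gaining all the decay we want from the rapidly-decaying $A$ factor, and on the second $\langle \theta - \theta''\rangle \gtrsim \langle \theta - \theta'\rangle$ while the remaining $\theta''$-integral of $\langle \theta-\theta''\rangle^{-1}\langle\theta''-\theta'\rangle^{-N}$ converges). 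Differentiating in $\theta$ or $\theta'$ only improves matters since derivatives of $R$ and of $A$ satisfy the same type of estimates.

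The near region, $|\theta'' - \theta'| \le 1$, is where the actual content lies: here $A(\theta'', \theta')$ is merely the Schwartz kernel of an order-$s$ pseudodifferential operator near its diagonal, which is not a function for $s \ge 0$. The point is that $R(\theta, \cdot)$ is a $C^\infty$ function, so $\theta'' \mapsto \chi(\theta'' - \theta')\, R(\theta, \theta'')$ (with $\chi \in \CIc$ equal to $1$ near $0$) is a compactly supported smooth test function of $\theta''$, depending smoothly on the parameters $\theta, \theta'$; pairing the distribution $A(\cdot, \theta')$ against it is legitimate and yields a smooth function of $(\theta, \theta')$. To get the weight $\langle \theta - \theta'\rangle^{-1}$ I would note that on this region $|\theta' - \theta''| \le 1$, hence $\langle \theta - \theta'' \rangle$ and $\langle \theta - \theta'\rangle$ are comparable, so the factor $\langle \theta - \theta''\rangle^{-1}$ coming from $R$ (and all of its $\theta$-derivatives, and $\theta'$-derivatives land on $\chi$ and contribute only bounded compactly-supported factors) directly supplies the decay; the seminorm estimates for the $A$-pairing are uniform in the parameter because the relevant $\CIc$ seminorms of $\theta'' \mapsto \chi(\theta''-\theta') R(\theta,\theta'')$ are bounded by $\langle \theta - \theta'\rangle^{-1}$ uniformly. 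Summing the two regions gives $RA \in \Psi_{\mathrm{res}}(\R)$, and the argument is manifestly uniform if $R = R_\epsilon$ is a uniform family.

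The main obstacle is handling the diagonal singularity of $A$ in the near region cleanly — one cannot just multiply kernels pointwise. The cleanest fix, which I would actually adopt to avoid distributional pairings altogether, is to first reduce to $A$ with sufficiently negative order: write $A = A_N + A_N'$ where $A_N \in \Psi^{-1,0}$ has a continuous (indeed, for even more negative order, $C^k$) kernel obtained by truncating the symbol expansion, and $A_N' \in \Psi^{s - N, 0}$; composing $R$ with the smoothing-enough remainder and with the low-order piece are both elementary convolution-type estimates, and one checks the finitely many derivatives needed for \eqref{eq:kernel_decay} are controlled by choosing $N$ large. Either route works; the distributional-pairing version is shorter to write and is what I would include.
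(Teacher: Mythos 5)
Your main argument is correct, but it takes a genuinely different route from the paper. The paper estimates $AR$ by writing its kernel as an oscillatory integral in $\xi$, splitting off a frequency cutoff $\chi(\xi)$, integrating by parts in $\xi$ for the low-frequency piece and by parts with $\xi^{-N}D_{\theta''}^N$ for the high-frequency piece, and only then performing the $\theta''$-integration with a Peetre-type splitting. You instead decompose in physical space, cutting the $\theta''$-integral into a near-diagonal region where $A(\cdot,\theta')$ is paired as a compactly-supported-plus-Schwartz distribution against the smooth test function $\chi(\cdot-\theta')R(\theta,\cdot)$, and a far-diagonal region where the rapid off-diagonal decay of the psdo kernel combines with the $\langle\theta-\theta''\rangle^{-1}$ decay of $R$. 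Both approaches are valid and roughly equal in length; yours has the slight advantage of making transparent where the near-diagonal singularity of $A$ is harmless (namely, because $R(\theta,\cdot)$ is a uniformly good test function whose $C^\infty_c$ seminorms are controlled by $\langle\theta-\theta'\rangle^{-1}$ on a unit neighborhood of $\theta'$), whereas the paper's integration-by-parts route is closer to a direct symbol-calculus computation. Note also that you argue the $RA$ case and the paper argues $AR$; as you say, the two are entirely symmetric.

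One small remark on the ``alternative fix'' at the end of your proposal: the decomposition $A = A_N + A_N'$ with $A_N \in \Psi^{-1,0}$ and $A_N' \in \Psi^{s-N,0}$ cannot hold as written for $s\geq 0$ and $N$ large, since the right-hand side would then be an operator of order $\max(-1,\, s-N) < s$. What you presumably have in mind is factoring out derivatives, e.g.\ writing $A = \sum_j \partial_\theta^{j} B_j + (\text{smoothing})$ with each $B_j$ of sufficiently negative order, and then using that $\partial_\theta^j R$ inherits the same kernel bounds -- but that route needs to be set up more carefully than what you wrote. Since you correctly defer to the distributional-pairing argument as the one you would actually include, this does not affect the validity of the proposal, but the remark as stated is not a correct reduction.
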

\begin{proof}
    Note that it suffices to check the estimate~\eqref{eq:kernel_decay} for $\alpha = \beta = 0$ since $\partial_\theta A, A \partial_\theta \in \Psi^{s + 1, 0}(\R)$. Let $A = \Op(a) \in \Psi^{s, 0}(\R)$ and let $\chi \in \CIc(\R; [0, 1])$ be such that $\chi = 1$ near $0$. The Schwartz kernel of $AR$ is given by 
    \[
        AR(\theta, \theta') = \frac{1}{2 \pi} \iint e^{i(\theta - \theta'')\xi} a(\theta'', \xi) R(\theta'', \theta') \, \md \xi \md \theta''.
    \]
    Observe that 
    \begin{align*}
        &\left|\iint \chi(\xi) e^{i(\theta - \theta'')\xi} a(\theta'', \xi) R(\theta'', \theta') \, \md \xi \md \theta'' \right| \\
        & \lesssim \int \chi(\theta - \theta'') |R(\theta'', \theta')| \, \md \theta''  + \int \frac{1 - \chi(\theta - \theta'')}{|\theta - \theta''|^N} |R(\theta'', \theta')| \, \md \theta'' \\
        & \lesssim \langle \theta - \theta' \rangle^{-1}, 
    \end{align*}
    where we integrated by parts in $\xi$ for the first inequality and the implicit constants depend only on symbol seminorms of $a$, seminorms of $\chi$, and the constants $C_{\alpha, \beta}$ in the estimates~\eqref{eq:kernel_decay} for $R$. Similarly, we also see that 
    \begin{align*}
        &\left|\iint (1 - \chi(\xi)) e^{i(\theta - \theta'')\xi} a(\theta'', \xi) R(\theta'', \theta') \, \md \xi \md \theta'' \right| \\
        & = \left|\iint \frac{(1 - \chi(\xi))}{\xi^N} e^{i(\theta - \theta'')\xi} D_{\theta''}^N (a(\theta'', \xi) R(\theta'', \theta')) \, \md \xi \md \theta'' \right|  \\
        & \lesssim \int \chi(\theta - \theta'') \langle \theta'' - \theta' \rangle^{-1} \, \md \theta''  + \int \frac{1 - \chi(\theta - \theta'')}{|\theta - \theta''|^N} \langle \theta'' - \theta' \rangle^{-1} \, \md \theta'' \\
        & \lesssim \langle \theta - \theta' \rangle^{-1}.  
    \end{align*}
    Estimates for $RA$ follow by a similar argument. 
\end{proof}

\begin{lemma}\label{lem:cutoff_decomp}
    Let $b$ be a cutoff symbol of the form~\eqref{eq:cutoff_symbol}. Then the Schwartz kernel $R$ of $\Op((H(\xi) - h_+(\xi))b)$ lies in $\Psi_{\mathrm{res}}(\R)$ uniformly. 
\end{lemma}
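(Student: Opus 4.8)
The plan is to analyze the Schwartz kernel of $\Op\bigl((H(\xi) - h_+(\xi))b\bigr)$ directly from the oscillatory integral formula and verify the defining estimate \eqref{eq:kernel_decay} of $\Psi_{\mathrm{res}}(\R)$ uniformly in $\epsilon > 0$. Write $g(\xi) \coloneqq H(\xi) - h_+(\xi)$, which is a compactly supported $L^\infty$ function; more importantly, by \eqref{eq:freq_cutoff}, $g$ vanishes for $\xi \ge c_\lambda$ and for $\xi < 0$, so $\supp g \subset [0, c_\lambda]$ is a \emph{bounded} set away from which $g \equiv 0$. Since $b(\theta',\xi) = H(\xi) e^{-\epsilon|\xi| z(\theta')}$ with $z \ge 0$ (we only care about $z$ on its support, and the factor is harmless), on the support of $g$ we have $|g(\xi) b(\theta',\xi)| \le |g(\xi)| \le C$ uniformly in $\epsilon$ and in $\theta'$. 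Thus the symbol $a_\epsilon(\theta',\xi) \coloneqq g(\xi) b(\theta',\xi)$ is bounded, supported in $\xi \in [0,c_\lambda]$, and depends on $\theta'$ only through the exponential $e^{-\epsilon|\xi|z(\theta')}$.

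The kernel is
\[
    R_\epsilon(\theta,\theta') = \frac{1}{2\pi}\int_0^{c_\lambda} e^{i(\theta-\theta')\xi}\, g(\xi)\, e^{-\epsilon\xi z(\theta')}\, \md\xi.
\]
First I would record the trivial bound: since the $\xi$-integral is over a fixed bounded interval and the integrand is bounded by $|g(\xi)| \le C$ uniformly in $\epsilon$, we get $|R_\epsilon(\theta,\theta')| \le C'$ for all $\theta,\theta'$, which already gives \eqref{eq:kernel_decay} when $|\theta - \theta'| \le 1$. For $|\theta - \theta'| \ge 1$ I would integrate by parts once in $\xi$, writing $e^{i(\theta-\theta')\xi} = \frac{1}{i(\theta-\theta')}\partial_\xi e^{i(\theta-\theta')\xi}$. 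The issue is that $g$ is not smooth — it has a jump at $\xi = 0$ — so integration by parts produces a boundary term at $\xi = 0$ (the point $\xi = c_\lambda$ contributes nothing since $g$ vanishes identically near there). This boundary term is $\frac{1}{i(\theta-\theta')}\cdot \frac{1}{2\pi}\, g(0^+) e^{-\epsilon\cdot 0}\cdot(\text{value at }\xi=0)$, which is $O(|\theta-\theta'|^{-1})$ uniformly in $\epsilon$. The remaining integral $\int_0^{c_\lambda} e^{i(\theta-\theta')\xi}\,\partial_\xi\bigl(g(\xi)e^{-\epsilon\xi z(\theta')}\bigr)\,\md\xi$ needs care: $\partial_\xi g$ is smooth and compactly supported, contributing an $O(1)$ integral, while the $\epsilon$-derivative of the exponential produces a factor $-\epsilon z(\theta')$, which is \emph{not} uniformly bounded as $\theta' \to \infty$. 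Here is where I would use the constraint $\partial_{\theta'} z(\theta') = 0$ for $z \ge M$, i.e. $z$ is constant outside a compact set — wait, more precisely the hypothesis in \eqref{eq:cutoff_symbol}'s setup is that $z$ grows but its derivative in $\theta'$ vanishes for large values; however, for the decay estimate what actually saves us is that $\epsilon z(\theta') e^{-\epsilon\xi z(\theta')}$ is, after pairing with one more factor from a second integration by parts, controlled: each integration by parts in $\xi$ gains a factor $|\theta-\theta'|^{-1}$ while at worst producing one power of $\epsilon z(\theta')$, and $(\epsilon z(\theta'))^k e^{-\epsilon\xi z(\theta')}$ is bounded on $\xi \in [c, c_\lambda]$ for any $c > 0$ — so I would first peel off a small neighborhood of $\xi = 0$ (where the lone boundary term already gave the decay) and on the remaining region $\xi \ge c$ iterate integration by parts $N$ times to obtain $O(|\theta-\theta'|^{-N})$, in particular $O(|\theta-\theta'|^{-1})$.

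The main obstacle is bookkeeping the interplay between the jump of $g$ at the origin and the non-compactly-supported (in $\theta'$) exponential factor $e^{-\epsilon|\xi|z(\theta')}$: one must split the $\xi$-integral into $\{0 \le \xi \le c\}$ and $\{c \le \xi \le c_\lambda\}$, handle the boundary contribution at $\xi = 0$ by the single leading integration by parts, and on the bounded-away-from-zero piece use that $\epsilon\xi z(\theta') \mapsto (\epsilon\xi z(\theta'))^k e^{-\epsilon\xi z(\theta')}$ is uniformly bounded (by a $k$-dependent constant) so that repeated integration by parts is legitimate and $\epsilon$-uniform. Finally, to get \eqref{eq:kernel_decay} with $\theta$- and $\theta'$-derivatives, I would note that $\partial_\theta$ pulls down a factor $i\xi$ (bounded on $[0,c_\lambda]$, hence harmless) and $\partial_{\theta'}$ pulls down $-i\xi - \epsilon\xi\, z'(\theta')$; since $z'$ is bounded (it vanishes for large argument and is smooth), this is again a bounded multiplier on the support of $g$, so all derivatives are treated by the same argument with $k$-dependent constants, giving $R_\epsilon \in \Psi_{\mathrm{res}}(\R)$ uniformly.
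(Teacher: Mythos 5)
Your argument is essentially correct and reaches the same conclusion, but it takes a more hands-on route than the paper's. The paper's proof is a one-liner conceptually: write the kernel as $R_\epsilon(\theta,\theta') = \bigl(\mathcal F^{-1}(H-h_+)\bigr)(\theta - \theta' + i\epsilon z(\theta'))$ (absorbing the exponential into the phase), note that $\mathcal F^{-1}(H-h_+)$ extends to an entire function with $\bigl|\partial_z^k\mathcal F^{-1}(H-h_+)(x+iy)\bigr| \leq C_k\ang{x}^{-1-k}$ for $|y|\leq 1$, and then the kernel bounds \eqref{eq:kernel_decay} follow from the chain rule together with the boundedness of $z$ and its derivatives. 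You instead estimate the oscillatory integral directly by integration by parts in $\xi$, correctly isolating the boundary term at the jump $\xi=0$ as the source of the $\ang{\theta-\theta'}^{-1}$ decay. Both work; the paper's version is cleaner because the decay estimates for $\mathcal F^{-1}(H-h_+)$ are established once and for all, whereas you redo the integration by parts with the $\theta'$-dependent factor still inside.

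One substantive confusion to flag: you worry that $-\epsilon z(\theta')$ is ``\emph{not} uniformly bounded as $\theta'\to\infty$'' and build an elaborate workaround (splitting the $\xi$-integral near $\xi=0$, iterating integration by parts with the observation that $(\epsilon\xi z)^k e^{-\epsilon\xi z}$ is bounded on $\xi\geq c$). This worry is unfounded. The hypothesis in \eqref{eq:cutoff_symbol} forces $z$ to be constant outside a compact set (and in the actual application, Lemma~\ref{lem:invol_errors}, one even has $C^{-1}\leq\Re z\leq C$), so $z$ and all its derivatives are uniformly bounded, and $|\epsilon z(\theta')| \leq \epsilon\|z\|_{L^\infty}$ is uniformly small. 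After a single integration by parts, the remaining integrand $\partial_\xi\bigl(g(\xi)e^{-\epsilon\xi z(\theta')}\bigr)$ is already uniformly bounded on the fixed interval $[0,c_\lambda]$, so one integration by parts plus the boundary term already gives $\ang{\theta-\theta'}^{-1}$; no splitting of the $\xi$-domain is needed. Your handling of the $\theta$- and $\theta'$-derivatives (each pulls down a bounded multiplier supported where $\xi$ lies in $[0,c_\lambda]$) is correct and matches the role of the chain rule in the paper's proof.
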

\begin{proof}
    We wish to estimate kernel of the form 
    \begin{equation*}
        \frac{1}{2 \pi} \iint e^{i(\theta - \theta' + i \epsilon z(\theta'))\xi} (H(\xi) - h_+(\xi))\, \md \xi.
    \end{equation*}
    The inverse Fourier transform of $H(\xi) - h_+(\xi)$ extends to an entire function with estimates 
    \[ \left|\partial_z^k \mathcal F^{-1} (H - h_+)(z)\right| \le C \langle x \rangle^{-1 - k} \quad \text{for $|y| \le 1$ where $z = x + iy$},\]
    from which the lemma follows. 
\end{proof}

\subsubsection{Sobolev spaces}\label{sssection:sobolev}
We fix some notations for various Sobolev spaces used in this paper. Recall the weighted Sobolev spaces $H^{s, \alpha}(\R)$ defined in~\eqref{eq:weighted_sob}. We can make the identification $\partial \Omega \simeq \R \sqcup \R$ using the $x_1$ coordinate. Then we can define the weighted Sobolev spaces $H^{s, \alpha}(\partial \Omega)$ simply as two copies of $H^{s, \alpha}(\R)$ on each component of the boundary. 

On the whole domain $\Omega$, we can consider subspaces of the extendable distributions $\overline{\mathcal D}'(\Omega)$ (see \cite[Appendix B]{Ho:85} for a review of distributions on manifolds with boundary). We have the usual subspaces $\bar H^s_{\loc}(\Omega)$ and $\bar H^s_{\comp}(\Omega)$ denoting the local and compactly supported extendable Sobolev spaces respectively. We can also globally define $\bar H^s(\Omega)$ using the Euclidean metric descended from $\R^2$. We will also consider Sobolev spaces weighted in the $x_1$ direction and define
\begin{equation*}
    \bar H^{s, \alpha}(\Omega) \coloneqq \{u \in \bar H^{s}_{\loc}(\Omega) : \langle x_1 \rangle^{\alpha} u \in \bar H^{s} \}, \qquad \|u\|_{\bar H^{s, \alpha}(\Omega)} \coloneqq \|\langle x_1 \rangle^{\alpha} u\|_{\bar H^s(\Omega)}.
\end{equation*}
Since we are interested in Dirichlet boundary conditions, we will also use the space $H^1_0(\Omega)$, denoting the space of distributions on $\Omega$ which extend by zero to elements of $H^1(\R^2)$. Equivalently, these are elements of $\bar H^1(\Omega)$ which have zero boundary trace. Finally, we can define the closed subspaces $\bar H^{s, \alpha}_0(\Omega) \coloneqq \bar H^{s, \alpha}(\Omega) \cap H^1_{0, \loc}(\Omega)$ for $s \ge 1$. Strictly speaking, we can define these spaces down to $s > 1/2$, but there is no need since $H^1_0(\Omega)$ is the natural energy space in view of the spectral theory.

\subsection{Dynamics and scattering}\label{sec:dynamics}
Recall that we are interested in studying the resolvent for the operator $P$ defined in~\eqref{eq:0pseudo}. Using the invertibility of the Dirichlet Laplacian, this amounts to studying the equation 
\begin{equation}\label{eq:P(w)u}
\begin{gathered}
    P(\omega)u_\omega = f, \qquad f \in \CIc(\Omega),  \\
    P(\omega) \coloneqq (P - \omega^2)\Delta = (1 - \omega^2) \partial_{x_2}^2 - \omega^2 \partial_{x_1}^2.
\end{gathered}
\end{equation}
By the spectral theorem, when $\Im \omega \neq 0$, there exists a unique $u_\omega \in H^1_0(\Omega)$ that solves~\eqref{eq:P(w)u}. For the limiting absorption principle, we are interested in the behavior of $u_\omega$ as $\omega \to \lambda \in [0, 1]$. We have the limiting problem
\begin{equation}\label{eq:SIW}
    P(\lambda) u = f, \quad u|_{\partial \Omega} = 0, \quad f \in \CIc(\Omega),
\end{equation}
which we call the \emph{stationary internal wave equation}. Observe that the limiting operator $P(\lambda)$ is simply a $(1 + 1)$-dimensional wave operator. Define
\begin{equation}\label{eq:ellpm}
    \ell^\pm(x, \lambda) \coloneqq \pm \frac{x_1}{\lambda} + \frac{x_2}{\sqrt{1 - \lambda^2}}
\end{equation}
The level sets of $\ell^\pm(x, \lambda)$ are precisely the characteristic sets of $P(\lambda)$, and these characteristic lines have slope $\pm c_\lambda$ with
\begin{equation}\label{eq:slope}
    c_\lambda \coloneqq \frac{\sqrt{1 - \lambda^2}}{\lambda}.
\end{equation}
The $\lambda$-subcritical condition~\eqref{eq:subcrit} can then be interpreted in terms of the characteristics. Define the upper and lower boundary of $\partial \Omega$ by 
\begin{equation*}
    \partial\Omega_\uparrow\coloneqq \{(x_1, 0) : x_1 \in \R\}, \quad \partial \Omega_{\downarrow} \coloneqq \{(x_1, G(x_1) - \pi) : x_1 \in \R\}.
\end{equation*}
If $\Omega$ is $\lambda$-subcritical, then each characteristic intersects each of $\partial \Omega_\uparrow$ and $\partial \Omega_\downarrow$ transversally at precisely one location. Therefore, there exist unique nontrivial involutions $\gamma_\lambda^\pm(x): \partial \Omega \to \partial \Omega$ so that 
\[\ell^\pm(\gamma^\pm_\lambda(x), \lambda) = \ell^\pm(x, \lambda).\]
Note that $\gamma_\lambda^\pm: \partial \Omega_\uparrow \to \partial \Omega_{\downarrow}$ and also $ \gamma_{\lambda}^{\pm}: \partial \Omega_\downarrow \to \partial \Omega_\uparrow$. Composing the two involutions then yields a map $b_\lambda: \partial \Omega_\bullet \to \partial \Omega_\bullet$, $\bullet=\uparrow, \downarrow$, given by 
\begin{equation*}
    b_\lambda = \gamma^-_\lambda \circ \gamma^+_\lambda.
\end{equation*}
We will often drop the subscript $\lambda$ from these maps since it will be clear from context. See Figure~\ref{fig:characteristics} for an diagram of $\Omega$ and the dynamics. 

\begin{figure}
    \centering
    \includegraphics{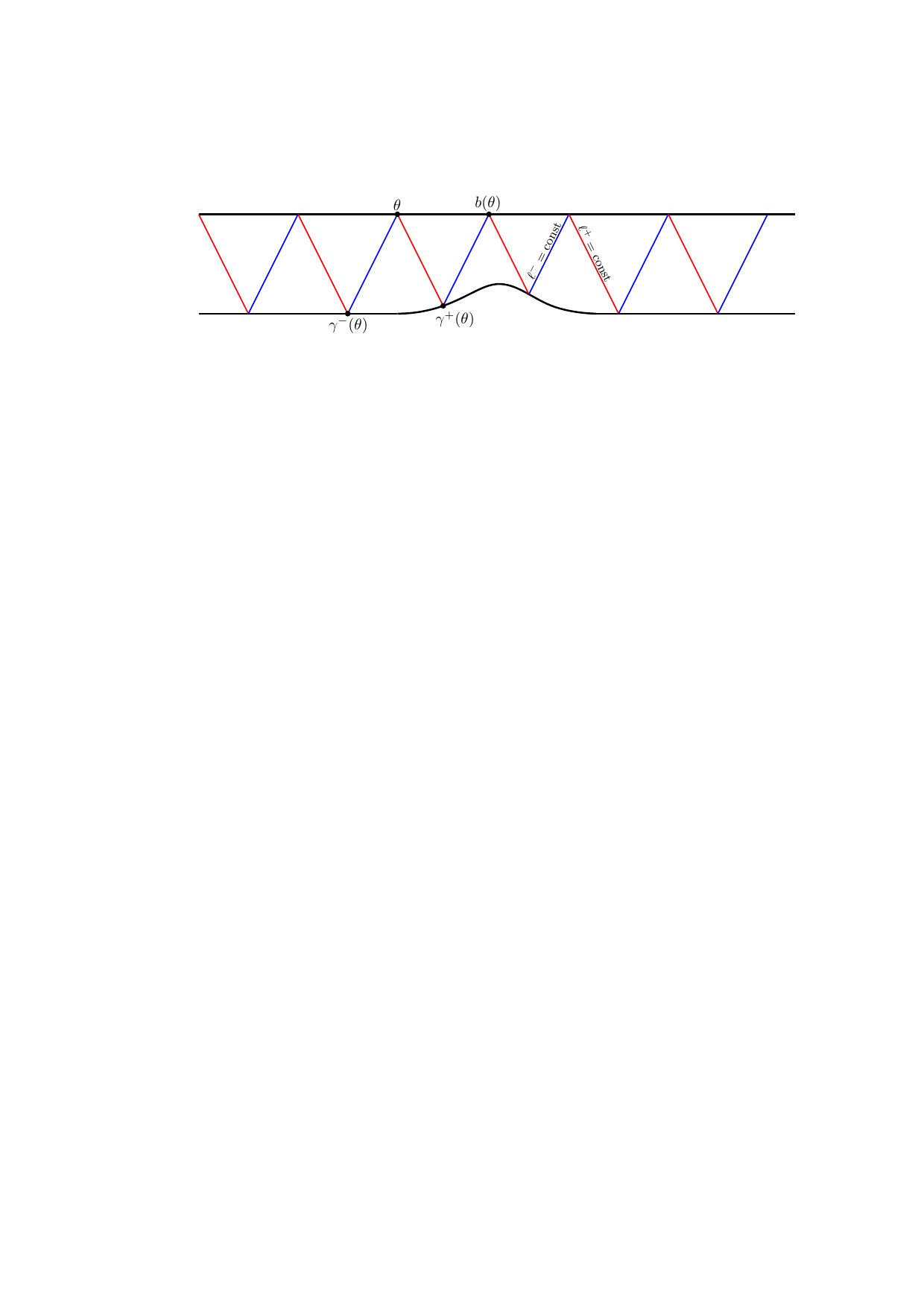}
    \caption{Example of a subcritical channel with the dynamics and characteristics labeled.}
    \label{fig:characteristics}
\end{figure}

Informally speaking, scattering data is propagated by the map $b_\lambda$, and the $\lambda$-subcritical condition essentially guarantees that scattering data on the far left side of the domain gets propagated to the far right side with no back scattering (and vice-versa). A precise description of the scattering problem is given in \cite{LiWaWu:24}, where we gave the exact relationship between the incoming and outgoing data on the ends of the domain. 

Fix some large $M > 0$ so that
\begin{equation}\label{eq:M}
    \supp G \subset \{x_1 \in \R: |x_1| < M\}, \quad \supp f \subset \{x \in \Omega: |x_1| < M\}.
\end{equation}
We will refer to the regions $\Omega \cap \{ \pm x_1 > M\}$ the right and left end of the domain respectively. Note that we simply have $P(\omega) u_\omega(x) = 0$ for $x$ in the left or right end, so we can solve the equation using Fourier sine series in the $x_2$ direction. The scattering data is located on the left and right ends of the domain, and it was shown in~\cite{LiWaWu:24} that incoming (and outoing) solutions are unique. A consequence of the uniqueness theorem is the following theorem, which shows that a condition roughly analogous to the Sommerfeld radiation condition in conventional scattering theory is sufficient to obtain uniqueness of solutions.
\begin{proposition}\label{prop:uniqueness}
    Fix cutoffs $\chi_\pm \in C^\infty(\R_{x_1})$ such that
    \[\supp \chi_\pm \subset \pm (M, \infty], \quad \chi_\pm(x_1) = 1 \text{ when } \pm x_1 \ge M + 1.\]
    Let $\widetilde \Pi_\pm$ denote Fourier projections defined in~\eqref{eq:proj}. Suppose $u \in H^2_{\loc}(\Omega) \cap H^1_{0, \mathrm{loc}}(\Omega)$ is a solution to $P(\lambda)u = 0$. Let $v \in L^2_{\loc}(\partial \Omega)$ denote the Neumann data. If
    \[\|\widetilde \Pi_+ \chi_+ v\|_{H^{0, \alpha}(\partial \Omega)} + \|\widetilde \Pi_- \chi_- v\|_{H^{0, \alpha}(\partial \Omega)} < \infty\]
    for some $\alpha > -1/2$, then $u = 0$. 
\end{proposition}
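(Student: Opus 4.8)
The plan is to use the radiation condition to force $u$ to be an \emph{incoming} solution of the homogeneous stationary equation, and then to invoke the uniqueness of incoming solutions from~\cite{LiWaWu:24}. I would begin by separating variables on the two ends. By~\eqref{eq:M}, on the right end $\Omega\cap\{x_1>M\}=(M,\infty)\times(-\pi,0)$ the equation $P(\lambda)u=0$ reads $(1-\lambda^2)\partial_{x_2}^2u=\lambda^2\partial_{x_1}^2u$ with Dirichlet data at $x_2=0,-\pi$, so writing $u=\sum_{n\ge 1}a_n(x_1)\sin(nx_2)$ and using ODE regularity in $x_1$ gives $a_n\in C^\infty$ with $a_n''=-c_\lambda^2 n^2a_n$, hence $a_n(x_1)=\alpha_n^R e^{ic_\lambda nx_1}+\beta_n^R e^{-ic_\lambda nx_1}$; the hypothesis $u\in H^2_{\loc}$ forces $\sum_n n^4(|\alpha_n^R|^2+|\beta_n^R|^2)<\infty$. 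In particular, on each component of $\partial\Omega\cap\{x_1>M\}$ the Neumann data $v$ is the series $\sum_n c_n a_n(x_1)$ with $|c_n|=n$, which converges absolutely and in $H^{0,\beta}(\R)$ for $\beta<-1/2$. The left end is handled identically, producing coefficients $\alpha_n^L,\beta_n^L$.

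Next I would show that the smooth frequency cutoffs isolate exactly one family of exponentials. Since $\chi_+$ is smooth and equal to $1$ near $+\infty$, $\chi_+'$ is compactly supported and $\widehat{\chi_+}$ is rapidly decreasing away from $\xi=0$; therefore, for every $k>0$, $\widetilde\Pi_+(\chi_+ e^{-ikx_1})$ has Schwartz Fourier transform (its only singularity sits at $\xi=-k\notin\supp h_+$), and, for every $k\ge c_\lambda$, $(\widetilde\Pi_+-\Id)(\chi_+ e^{ikx_1})$ is likewise Schwartz (because $h_+-1$ vanishes on $\{\xi\ge c_\lambda\}\ni k$, where the singularity of the symbol sits). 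These are variants of Lemmas~\ref{lem:residual} and~\ref{lem:cutoff_decomp}, and the relevant Schwartz seminorms of the error terms, with $k=c_\lambda n$, decay like $n^{-\infty}$. Since $\sum_n n(|\alpha_n^R|+|\beta_n^R|)<\infty$, these identities may be summed over $n$, giving, for $x_1\ge M+1$,
\[\widetilde\Pi_+\chi_+ v\big|_{\partial\Omega_\uparrow}=\sum_n c_n\alpha_n^R e^{ic_\lambda nx_1}+(\text{Schwartz}),\]
and analogously $\widetilde\Pi_-\chi_- v$ on the left end equals $\sum_n c_n\beta_n^L e^{-ic_\lambda nx_1}$ plus a Schwartz remainder.

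Then the radiation condition enters. The function $g(x_1):=\sum_n c_n\alpha_n^R e^{ic_\lambda nx_1}$ is an absolutely convergent superposition of exponentials with \emph{distinct} frequencies $c_\lambda n$, so $\tfrac1T\int_{M+1}^{M+1+T}|g|^2\,\md x_1\to\sum_n n^2|\alpha_n^R|^2$ as $T\to\infty$; if this limit were positive, a dyadic decomposition of $\int_{M+1}^\infty\langle x_1\rangle^{2\alpha}|g|^2\,\md x_1$ would show it is infinite for every $\alpha\ge-1/2$, contradicting $\|\widetilde\Pi_+\chi_+ v\|_{H^{0,\alpha}(\partial\Omega)}<\infty$ (the Schwartz error being harmless). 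Hence $\alpha_n^R=0$ for all $n$, and symmetrically $\beta_n^L=0$ for all $n$. Thus $u$ is purely negative-frequency (in $x_1$) on the right end and purely positive-frequency on the left end, which is exactly the end behavior characterizing an incoming solution (see \S\ref{sec:dynamics} and~\cite{LiWaWu:24}). Since $u$ solves the homogeneous equation with Dirichlet data and $0$ is the incoming solution with zero forcing, the uniqueness theorem of~\cite{LiWaWu:24} yields $u=0$.

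The hard part is the middle two steps: translating a purely analytic condition — finiteness of a weighted $L^2$ norm of a frequency-localized slice of the Neumann data — into the dynamical statement that $u$ carries no outgoing component in either end. This requires a careful bookkeeping of how the smooth projection $\widetilde\Pi_\pm$ acts on the pure modes $e^{\pm i c_\lambda n x_1}$ modulo residual operators (supplied by the calculus of \S\ref{sec:pre}), together with the elementary but essential fact that a nonzero almost periodic function cannot lie in $\langle x_1\rangle^\alpha L^2$ once $\alpha\ge-1/2$. By contrast, the separation of variables in the first step and the appeal to~\cite{LiWaWu:24} in the last step are comparatively routine.
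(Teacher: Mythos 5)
Your proposal is correct and follows essentially the same strategy as the paper's proof: separate variables in the flat ends to get a modal expansion in $e^{\pm ic_\lambda n x_1}$, show that $\widetilde\Pi_\pm\chi_\pm$ isolates one of the two mode families up to rapidly decaying errors, deduce from $\alpha>-1/2$ that the isolated family must vanish identically, and then invoke the uniqueness theorem of~\cite{LiWaWu:24}. The two implementations of the middle steps differ only mildly. You exploit the $H^2_{\loc}$ regularity of $u$ to obtain an absolutely summable expansion ($\sum_n n(|\alpha_n|+|\beta_n|)<\infty$), handle $\widetilde\Pi_+(\chi_+e^{\mp ic_\lambda n x_1})$ mode by mode with Schwartz errors of size $\mathcal O(n^{-\infty})$, and finish with the Besicovitch mean-square identity together with a dyadic decomposition of the weighted integral. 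The paper instead works directly with the $\ell^2$ Fourier coefficients of the Neumann data $v$: it extends $v$ to an almost-periodic $\widetilde v$ on $\R$, uses $[\widetilde\Pi_+,\chi_+]\in\Psi^{-\infty,-\infty}(\R)$ to pass to $\chi_+\widetilde\Pi_+\widetilde v$, and then expands $\|\chi_+\widetilde\Pi_+\widetilde v\|^2_{H^{0,\alpha}}$ so that each diagonal term carries the divergent factor $\int\langle x_1\rangle^{2\alpha}\chi_+^2\,\md x_1$ while the off-diagonal cross terms are $\mathcal O(|j-k|^{-N})$ by integration by parts. Both routes reach the same conclusion $\alpha_n^R=\beta_n^L=0$; yours requires the slightly stronger summability (which the hypotheses supply anyway), while the paper's version is somewhat shorter once the scattering-calculus commutator estimate is in hand.
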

\begin{proof}
    Solving in Fourier series and taking the Neumann data, we see that 
    \begin{equation*}
        v(x_1) = \sum_{k \in \N} \left(a^-_k e^{-ic_\lambda k x_1} + a^+_k e^{ic_\lambda k x_1}\right), \quad x_1 \ge M
    \end{equation*}
    for some $(a^\pm_k) \in \ell^2$. 
Let us denote 
\[ \widetilde v(x_1)\coloneqq \sum_{k\in \mathbb N} (a_k^- e^{-ic_\lambda k x_1} + a_k^+ e^{i c_\lambda k x_1} ), \quad  x_1\in \R.   \]
Then we have 
\[ \widetilde \Pi_+\chi_+v= \widetilde \Pi_+\chi_+\widetilde v = [\widetilde \Pi_+,\chi_+]\widetilde v + \chi_+\widetilde \Pi_+\widetilde v. \]
Since $[\widetilde \Pi_+,\chi_+]\in \Psi^{-\infty,-\infty}$, the condition $\widetilde \Pi_+\chi_+ v\in H^{0,\alpha}$ is equivalent to $\chi_+\widetilde \Pi_+ \widetilde v\in H^{0,\alpha}$. We now compute 
\[\begin{split} 
\|\chi_+\widetilde \Pi_+\widetilde v\|_{H^{0,\alpha}}^2 
= & \int_{\R} \langle x_1 \rangle^{2\alpha}\chi_+(x_1)^2\left| \sum_{k\in \mathbb N} h_+(c_\lambda k) a_k^+ e^{ic_\lambda k x_1} \right|^2 \, \mathrm d x_1 \\
= & \sum_{k\in \mathbb N} | h_+(c_\lambda k)a_k^+|^2\int_{\R} \langle x_1 \rangle^{2\alpha}\chi_+(x_1)^2 \, \mathrm d x_1 \\
& + \sum_{j\neq k} h_+(c_\lambda j)h_{+}(c_\lambda k) a_j^+\overline{a_k^+}\int_{\R} e^{ic_\lambda (j-k)x_1}\langle x_1 \rangle^{2\alpha}\chi_+(x_1)^2 \, \mathrm d x_1.
\end{split}\]
For $\alpha>-\frac12$, $j\neq k$, we have 
\[\begin{split} 
& \int_{\R} \langle x_1 \rangle^{2\alpha}\chi_+(x_1)^2 \,\mathrm d x_1 = +\infty, \\ & \int_{\R} e^{ic_\lambda (j-k)x_1} \langle x_1 \rangle^{2\alpha}\chi_+(x_1)^2 \, \mathrm d x_1 = \mathcal O(|j-k|^{-N})  \text{ for all } N\in \R. 
\end{split}\]
The second estimate is obtained by integration by parts in $x_1$. 
Recall that $h_+ = 1$ on $[c_\lambda,+\infty]$ and
we conclude that $\widetilde \Pi_+\chi_+ v\in H^{0,\alpha}(\partial\Omega)$ implies that $a_k^+=0$ for all $k\in \mathbb N$.
\end{proof}

Following the lemma, we give an equivalent formulation for~Definition~1.2 of \cite{LiWaWu:24}, where incoming and outgoing solutions were defined by exact vanishing of half the Fourier coefficients in the ends.
\begin{definition}\label{def:io}
    We say that $u \in \bar H^{2, -N}(\Omega)$ is \emph{incoming} if
    \[\|\widetilde \Pi_+ \chi_+ v\|_{H^{0, \alpha}(\partial \Omega)} + \|\widetilde \Pi_- \chi_- v\|_{H^{0, \alpha}(\partial \Omega)} < \infty\]
    for every $\alpha > -1/2$, where $v$ is the Neumann data of $u$. Similarly, we say that $u$ is \emph{outgoing} if 
    \[\|\widetilde \Pi_- \chi_+ v\|_{H^{0, \alpha}(\partial \Omega)} + \|\widetilde \Pi_+ \chi_- v\|_{H^{0, \alpha}(\partial \Omega)} < \infty\]
    for every $\alpha > -1/2$. 
\end{definition}
If $u$ is the the solution to the stationary internal waves equation~\eqref{eq:SIW}, then this definition coincides with the definition in \cite{LiWaWu:24}. However, we will also use this terminology with other functions on $\Omega$. Furthermore, note that incoming (or outgoing) functions form a subspace.

\section{End estimates}\label{sec:end}
Now we begin our analysis of the limiting resolvent \eqref{limitingresolvent}. Fix an open interval $\mathcal J \subset [0, 1]$ such that $\Omega$ is subcritical with respect to every $\lambda \in \mathcal J$. Let us consider the spectral parameter $\omega = \lambda + i \epsilon$ in the region $\mathcal J + i (0, \infty)$. In particular, we are interested in the behavior of the spectral problem as $\epsilon \downarrow 0$. We first consider the equation~\eqref{eq:P(w)u} in the left and right ends of our domain. See~\eqref{eq:M} for a precise definition of the left and right ends.

\subsection{High frequency decay estimate}
By the spectral theorem, we have that $P(\omega): H^1_0(\Omega) \to H^{-1}(\Omega)$ is invertible. Furthermore, by elliptic regularity, we know that if $u_\omega$ is a solution to \eqref{eq:P(w)u}, then $u_\omega \in H^1_0(\Omega) \cap \overline C^\infty(\Omega)$ (see \cite[\S20.1]{Ho:85}). Therefore, it makes sense to take the Neumann data of $u_\omega$,
\[v_\omega = (\partial_{\hat n} u_\omega)|_{\partial \Omega},\]
where $\hat n$ denotes the \emph{outward} pointing unit normal vector. 

The goal of this subsection is to show that the positive frequencies of $v_\omega$ in the right end and the negative frequencies in the left end decay rapidly at fixed $\ep>0$, and that they are uniformly (as $\epsilon\downarrow 0$) controlled by low frequency and locally supported information about $u_\omega$. 
\begin{proposition}\label{prop:high_decay}
    Let $u_\omega \in H^1_0(\Omega)$ be a solution to \eqref{eq:P(w)u} for $f \in \CIc(\Omega)$,
    and let $v_\omega$ denote the corresponding Neumann data on $\partial \Omega_\bullet \simeq \R$, $\bullet = \uparrow, \downarrow$. Let $M$ be as in~\eqref{eq:M} and choose cutoffs $\chi_\pm \in C^{\infty}(\R;\R)$ so that
    \[ \chi_-(x_1) = \chi_+(-x_1), \ \supp \chi_+ \subset (M + 1, \infty], \ \chi_+(x_1) = 1 \ \text{when} \ x_1 \ge M + 2.\]
    Then for any $s, \alpha, N \in \R$ there exist $\chi_0(x_1) \in C_c^\infty(\R)$ and $C>0$ such that 
    \[\|\widetilde \Pi_\pm \chi_\pm v_\omega\|_{H^{s, \alpha}(\R)} \le C\|\chi_0 u_\omega\|_{L^2(\Omega)} + C \norm{ \chi_\pm v_\omega}_{H^{-N,-N}(\R)},\]
    where $\widetilde \Pi_{\pm}$ are the approximate Fourier projectors defined in~\eqref{eq:proj}.
\end{proposition}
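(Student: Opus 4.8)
The plan is to work in the right end $\Omega \cap \{x_1 > M\}$ (the left end being symmetric) and to use the fact that $P(\omega)u_\omega = 0$ there, so that $u_\omega$ can be expanded in a Fourier sine series in $x_2 \in (-\pi, 0)$. Writing $u_\omega(x) = \sum_{k \ge 1} c_k(x_1) \sin(k x_2)$, the equation $(1-\omega^2)\partial_{x_2}^2 u_\omega - \omega^2 \partial_{x_1}^2 u_\omega = 0$ decouples into ODEs $\omega^2 c_k'' = -(1-\omega^2)k^2 c_k$, whose solutions are $c_k(x_1) = a_k^+ e^{i \mu_k x_1} + a_k^- e^{-i\mu_k x_1}$ with $\mu_k = \mu_k(\omega) = k\sqrt{1-\omega^2}/\omega$ (a branch chosen so that $\Im \mu_k > 0$ when $\Im \omega > 0$; note $\mu_k \to c_\lambda k$ as $\epsilon \downarrow 0$). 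Since $u_\omega \in H^1_0(\Omega)$, the solution must be $L^2$ as $x_1 \to +\infty$, which forces $a_k^- = 0$: only the $e^{i\mu_k x_1}$ modes (exponentially decaying at rate $\Im \mu_k > 0$) survive. The Neumann data on, say, $\partial\Omega_\uparrow$ is then $v_\omega(x_1) = -\sum_{k\ge 1} k\, a_k^+ e^{i\mu_k x_1}$ for $x_1 > M$, and the key point is that this is a superposition of purely \emph{positive}-frequency-in-spirit pieces, each with a \emph{uniform} exponential decay $e^{-k(\Im \mu_k)x_1}$ that is favorable (controls arbitrarily many weights and derivatives) once we pay the price of $\chi_+$ living in $\{x_1 > M+1\}$.

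The first main step is to make the heuristic "$v_\omega$ is positive frequency in the right end" quantitative and uniform in $\epsilon$. On $\{x_1 > M\}$ we can write $\chi_+ v_\omega = \chi_+ \tilde v_\omega$ where $\tilde v_\omega(x_1) = -\sum_k k a_k^+ e^{i\mu_k x_1}$ is defined for all $x_1 \in \R$. Then $\widetilde\Pi_+ \chi_+ v_\omega = [\widetilde\Pi_+, \chi_+]\tilde v_\omega + \chi_+ \widetilde\Pi_+ \tilde v_\omega$, and since $[\widetilde\Pi_+, \chi_+] \in \Psi^{-\infty,-\infty}(\R)$ (the symbols $h_+$ and $\chi_+$ having disjoint... rather, the commutator gaining in both orders), the first term is harmless — it is bounded in every $H^{s,\alpha}$ by $\|\chi_+ v_\omega\|_{H^{-N,-N}}$ after a short argument (one localizes $\tilde v_\omega$ near $\mathrm{supp}\,\chi_+$ up to a residual error, as in the proof of Proposition~\ref{prop:uniqueness}). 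For the main term $\chi_+ \widetilde\Pi_+ \tilde v_\omega$, we compute its Fourier transform: $\widetilde\Pi_+$ applies the multiplier $h_+(\xi)$, and the exponential decay $e^{-(\Im\mu_k) x_1}$ for $x_1$ in the support of $\chi_+$ (which is bounded below by $M+1$) together with $\Im\mu_k \gtrsim k$ uniformly in $\lambda \in \mathcal J$, $\epsilon > 0$, gives a factor $e^{-c k}$. This summable-in-$k$ bound, combined with the fact that each mode has frequency $\mathrm{Re}\,\mu_k \ge 0$ so $h_+$ does not interfere destructively, yields $\|\chi_+ \widetilde\Pi_+ \tilde v_\omega\|_{H^{s,\alpha}} \lesssim \big(\sum_k |k a_k^+|^2 e^{-ck}\big)^{1/2}$ with constants uniform as $\epsilon \downarrow 0$, for every $s, \alpha$. (Concretely one estimates the kernel of $\chi_+ \widetilde\Pi_+$ applied to a single mode and sums.)

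The second main step is to dominate $\sum_k |k a_k^+|^2 e^{-ck}$ — i.e.\ the Fourier coefficients of $u_\omega$ in the right end — by $\|\chi_0 u_\omega\|_{L^2(\Omega)}$ for a suitable compactly supported $\chi_0$. Here we fix a point $x_1 = M + 1/2$ say (inside the end, but outside $\mathrm{supp}\,\chi_+$) and use that $u_\omega(M+1/2, x_2) = \sum_k a_k^+ e^{i\mu_k(M+1/2)}\sin(k x_2)$ together with a neighborhood: integrating $|u_\omega|^2$ over a slab $\{M < x_1 < M+1\}$ (which is contained in $\{\chi_0 = 1\}$ for appropriate $\chi_0 \in C_c^\infty$) and using orthogonality of $\sin(kx_2)$ gives $\|\chi_0 u_\omega\|_{L^2}^2 \gtrsim \sum_k |a_k^+|^2 \int_M^{M+1} e^{-2(\Im\mu_k)x_1}\,dx_1 \gtrsim \sum_k |a_k^+|^2 e^{-2(\Im\mu_k)(M+1)}$. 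Since on the support of $\chi_+$ (which starts at $M+1$) the relevant weight was $e^{-2(\Im\mu_k)(M+1)}$ or smaller, and we have an extra $e^{-ck}$ to spare from having taken $\chi_+$ supported strictly to the right of the slab, the factor $k^2$ and the loss in $e^{-ck}$ vs.\ $e^{-2(\Im\mu_k)(M+1)}$ are absorbed. (This is exactly the mechanism "positive frequencies in the right end are controlled by low-frequency, compactly-supported data about $u_\omega$": the modes are forced to decay, so their size anywhere in the end is comparable, up to harmless constants, to their size on a fixed slab.) Putting the two steps together gives the claimed estimate $\|\widetilde\Pi_\pm \chi_\pm v_\omega\|_{H^{s,\alpha}} \le C\|\chi_0 u_\omega\|_{L^2} + C\|\chi_\pm v_\omega\|_{H^{-N,-N}}$.

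The main obstacle, and the place requiring the most care, is proving uniformity in $\epsilon$ — in particular that the exponential-decay rates $\Im\mu_k(\lambda+i\epsilon)$ stay bounded below by $ck$ with $c > 0$ independent of $\epsilon$ and $\lambda \in \mathcal J$, \emph{and} that the branch of $\sqrt{1-\omega^2}/\omega$ stays in a fixed sector so that $\mathrm{Re}\,\mu_k$ does not wander into the region where $h_+$ vanishes. The first is straightforward ($\Im\mu_k = k\,\Im(\sqrt{1-\omega^2}/\omega)$ and $\Im(\sqrt{1-\omega^2}/\omega) > 0$ with a positive lower bound on compact subsets of $\{0 < \lambda < 1\} + i[0,\infty)$, degenerating only as $\epsilon \downarrow 0$ where it vanishes but the $e^{-ck}$ with $k \ge 1$ still saves us — actually one wants the cruder bound $\Im\mu_k \ge 0$ always, with the strict decay used only to justify $a_k^- = 0$). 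The genuinely delicate bookkeeping is in the $\widetilde\Pi_+$-commutator step and in checking that the symbol arithmetic $[\widetilde\Pi_+,\chi_+] \in \Psi^{-\infty,-\infty}$ combined with the kernel decay of $\Psi_{\mathrm{res}}(\R)$ operators (Lemmas~\ref{lem:residual},~\ref{lem:cutoff_decomp}) gives the mapping into $H^N$ from $H^{-N}$ with constants that do not blow up; once that is in place the result follows.
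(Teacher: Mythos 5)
Your overall framework matches the paper: expand $u_\omega$ in a Fourier sine series in the right end, pass to the boundary data, split off the commutator $[\widetilde\Pi_+,\chi_+]$, and bound the remaining Fourier coefficients by $\|\chi_0 u_\omega\|_{L^2}$ via a slab integration. But the key step --- controlling $\chi_+\widetilde\Pi_+ \tilde v_\omega$ uniformly in $\epsilon$ --- is where your proposed mechanism breaks down, and the way it breaks down is not repairable by the tweaks you suggest.

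First, a sign issue that matters: with the branch chosen so that $\Im\mu_k>0$ (which you need for the decaying solution and for the constraint $a_k^-=0$), one has $\mu_k \to -c_\lambda k$ as $\epsilon\downarrow 0$, \emph{not} $+c_\lambda k$. Concretely $\sqrt{1-\omega^2}/\omega = c_\lambda - i\epsilon C_\lambda + \mathcal O(\epsilon^2)$ with $C_\lambda>0$, so the decaying branch has $\Re\mu_k\approx -c_\lambda k<0$. Hence the Neumann data $\sum_k k a_k^+ e^{i\mu_k x_1}$ in the right end is a superposition of \emph{negative}-frequency modes, lying entirely outside $\supp h_+\subset(0,\infty]$. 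Your assertion that ``each mode has frequency $\Re\mu_k\ge 0$ so $h_+$ does not interfere destructively'' is therefore backwards, and the ``favorable interference'' you invoke does not exist. The point is precisely the opposite: $\widetilde\Pi_+$ \emph{does} annihilate the modes to leading order, and that annihilation is the entire content of the estimate.

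Second, and more seriously, the quantitative bound you propose, $\|\chi_+\widetilde\Pi_+\tilde v_\omega\|_{H^{s,\alpha}}^2\lesssim\sum_k|k a_k^+|^2 e^{-ck}$, is derived from $\Im\mu_k\gtrsim k$ ``uniformly in $\epsilon>0$''. This is false: $\Im\mu_k = k\,\Im(\sqrt{1-\omega^2}/\omega) = \mathcal O(\epsilon k)$, which vanishes as $\epsilon\downarrow 0$ at every fixed $k$. You acknowledge this at the end (``actually one wants the cruder bound $\Im\mu_k\ge 0$''), but once the exponential factor is dropped nothing in your argument absorbs the $k^{2s+2}$ that appears after taking $s$ derivatives of the Neumann data, and the chain collapses. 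The mechanism that actually survives the $\epsilon\downarrow 0$ limit is the frequency separation, not the exponential damping. In the paper's proof one computes $\F(\langle x_1\rangle^\alpha\chi_+\tilde v_\omega)(\xi)=\sum_k\tilde a_k\,\F(\langle x_1\rangle^\alpha\chi_+)(\xi+c_\omega k)$, and since $\chi_+$ is supported in $\{x_1>M+1\}$, repeated integration by parts against $(\xi-i\eta)^{-1}D_{x_1}$ gives $|\F(\langle x_1\rangle^\alpha\chi_+)(\xi-i\eta)|\le C_K\langle\xi\rangle^{-K}$ for $\xi\in\supp h_+$, $\eta>0$. Evaluated at $\xi+c_\omega k$ with $\xi\ge c_\lambda/2$ and $\Re c_\omega\ge\delta_\lambda>0$, this yields $\langle \xi+k\Re c_\omega\rangle^{-K}$ --- rapid decay in both $\xi$ and $k$ simultaneously, with constants uniform down to $\epsilon=0$. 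That double gain is what pays for the $k^{2s+2}$ factor and all the weights $\langle x_1\rangle^\alpha$, and it comes from the modes sitting at frequency $\approx -c_\lambda k$ being probed by $h_+$ at frequency $\ge c_\lambda$, not from decay of $e^{-(\Im\mu_k)x_1}$. To repair your proposal you would need to replace the ``exponential decay on $\supp\chi_+$'' paragraph with this uniform Paley--Wiener-type estimate on $\F(\langle x_1\rangle^\alpha\chi_+)$ evaluated at shifted complex frequencies.
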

\begin{Remark}
    This estimate should be interpreted as a form of a microlocal source estimate, which were first presented in the context of scattering theory in \cite{Me:94}. We emphasize that this is an a priori estimate; we already know that $u \in H^1_0(\Omega)$ exists. Implicit in this existence is that $u$ has sufficient decay in the ends. Such necessary decay usually manifests as a threshold condition in microlocal source estimates. 
\end{Remark}
\begin{proof}
    We first focus on the estimate in the right end of the domain. Consequently, we make the translation $x_1 \mapsto x_1 + M + 1$ so that the support of $f$ and the topography $G$ lies in the region $x_1 < -1$. In these coordinates, we note that 
    \[\supp \chi_+ \subset (0, \infty], \quad \chi_+ (x_1) = 1 \quad \text{when} \quad x_1 > 1, \quad \chi_0 \equiv 1 \quad \text{near} \quad x_1 = 0.\]
    We wish to study the equation in the region $x_1 > -1$. For $\epsilon > 0$, $P(\omega): H^1_0(\Omega) \to H^{-1}(\Omega)$ is invertible, so we can solve in Fourier series and see that for $x_1 > -1$, any solution must take the form 
    \begin{equation}\label{eq:u_series}
        u_\omega(x_1, x_2) = \sum_{k \in \N} a_k \sin(k x_2) e^{-i c_\omega k (x_1 + 1)}, \quad x_1 > -1
    \end{equation}
    where 
    \begin{equation*}
        c_\omega^2 \coloneqq \frac{1 - \omega^2}{\omega^2} \implies c_\omega = \frac{\sqrt{1 - \lambda^2}}{\lambda} - \frac{i \epsilon}{\sqrt{1 - \lambda^2}} + \mathcal O(\epsilon^2).
    \end{equation*}
    Note that there exists $\delta_\lambda>0$ such that $\Re c_\omega\geq \delta_\lambda$ for all $\ep$ sufficiently small.

    The Neumann data on $\partial \Omega_\uparrow$ in the right end is given by 
    \[v_\omega(x_1) = \sum_{k \in \N} k a_k e^{-ic_\omega k} e^{-i c_\omega k x_1}, \quad x_1 > -1.\]
    For $s\in \mathbb N_0$ we define $\tilde v_\omega \coloneqq (-\frac{1}{c_\omega}D_{x_1})^s v_\omega$, then for $x_1>-1$
    \[ \tilde v_\omega(x_1) = \sum_{k \in \N} \tilde a_k e^{-ic_\omega k x_1}, \quad \tilde a_k \coloneqq k^{s+1} a_k e^{-ic_\omega k}. \]
    Note that for $\ep\geq 0$, $\abs{\tilde a_k} \leq  |k|^{s+1} \abs{a_k}$.  It suffices to show 
    \[\|\widetilde \Pi_+ \chi_+ \tilde v_\omega\|_{H^{0, \alpha}(\R)} \le C \|\chi_0 u_\omega\|_{L^2(\Omega)} + C \|\chi_+\widetilde v_{\omega}\|_{H^{-N,-N}(\mathbb R)}.\]
  Repeated integration by parts with respect to $(\xi-i\eta)^{-1}D_{x_1}$ shows that for all $K \in \NN$
$$
\abs{\F(\ang{x_1}^\alpha \chi_+)(\xi-i\eta)} \leq C_{K} \ang{\xi}^{-K},\ \xi \in \supp h_+,\ \eta \in (0,+\infty).
$$
Now observe that
    \[\mathcal F(\ang{x_1}^{\alpha} \chi_+ \tilde v_\omega)(\xi) = \sum_{k \in \N} \tilde a_k \mathcal F(\ang{x_1}^{\alpha + N} \chi_+)(\xi + c_\omega k).\]
    Since $\Re c_\omega \geq \delta_\lambda$ for sufficiently small $\epsilon$, the Plancherel identity implies that for every $K \geq 2$,
\begin{equation*}\begin{aligned}
\norm{\widetilde \Pi_+  \ang{x_1}^\alpha \chi_+ \tilde v_\omega}_{L^2(\R)} &\leq  \sum_{k \in \N}\big \| h_+(\xi) \tilde a_k \mathcal F(\ang{x_1}^{\alpha} \chi_+)(\xi + c_\omega k)\big \|_{L^2(\R)}\\
&\lesssim \sum_{k \in \N}\big \| h_+(\xi) \abs{\tilde a_k} \ang{\xi + k\Re c_\omega}^{-K}\big \|_{L^2(\R)}\\
&\leq \sum_{k \in \N}\big \| h_+(\xi) \abs{\tilde a_k} \ang{\xi}^{-K/2} \ang{k\delta_\lambda}^{-K/2}\big \|_{L^2(\R)}\\
&\lesssim \sum_{k \in \N}\abs{\tilde a_k} \ang{k\delta_\lambda}^{-K/2}.
\end{aligned}
\end{equation*}
Using Cauchy--Schwarz inequality and taking $K$ sufficiently large, we may achieve
\[\begin{split}
\norm{\widetilde \Pi_+  \ang{x_1}^\alpha \chi_+ \tilde v_\omega}_{L^2(\R)}^2 \lesssim \left(\sum \langle k \rangle^{2s+2-K}\right)  \left(\sum \ang{k}^{-2s-2}\abs{\tilde a_k}^2 \right) \leq \sum \abs{a_k}^2 e^{2 k \Im c_\omega}.
\end{split}\]
Now take $\chi_0$ to equal $1$ on $(-1,1)$ in our shifted coordinates; this yields (by the Plancherel identity for sine series)
$$
\norm{\chi_0(x_1) u_\omega}^2_{L^2(\Omega)} \gtrsim  \sum \abs{a_k}^2 \int_{-1}^1 \abs{e^{-ik c_\omega(x_1+1)}}^2\, \md x_1 = \sum |a_k|^2 \frac{ 1- e^{4k\Im c_{\omega}} }{2 k \abs{\Im c_\omega}}.
$$
Notice that $\frac{ 1- e^{4k\Im c_{\omega}} }{2 k \abs{\Im c_\omega}} = e^{2k\Im c_
\omega}\frac{ \sinh(2k|\Im c_\omega|) }{2k|\Im c_\omega|}\geq e^{2k\Im c_\omega}$. Hence we obtain
$$
\norm{\widetilde \Pi_+  \ang{x_1}^\alpha \chi_+ \tilde v_\omega}_{L^2(\R)} \lesssim \norm{\chi_0(x_1) u_\omega}_{L^2(\Omega)}.
$$

Finally, since $[\langle x_1 \rangle^{\alpha}, \widetilde \Pi_+ ] \in \Psi^{-\infty, - \infty}$,
$$
\norm{\widetilde \Pi_+  \chi_+ \tilde v_\omega}_{H^{0,\alpha}(\R)} \lesssim \norm{\chi_0 u_\omega}_{L^2(\Omega)}+ \|\chi_+ v_\omega\|_{H^{-N, -N}(\R)} .
$$
 This completes the proof for the Neumann data on $\partial \Omega_{\uparrow}$ in the right end. The other cases follow by similar arguments with minor sign changes. 
\end{proof}

\subsection{Low frequency decay estimate}
Next, we show that $u_\omega$ is controlled in $H^{s, \beta}$ by $\chi_0 u_\omega$ for all $s\in \RR$ and $\beta < -\frac{1}{2}$, uniformly in $\epsilon$. This will be our analogue of a microlocal sink estimate.

\begin{proposition}\label{prop:low_decay}
    Let $u_\omega \in H^1_0(\Omega)$ be a solution to equation~\eqref{eq:P(w)u}, and let $\chi_\pm$ and $\chi_0$ be the same cutoffs as in Proposition~\ref{prop:high_decay}. Then for any $s\in \RR$, $\beta < -\frac{1}{2}$, we have
    \begin{equation}\label{eq:lowdecay}
        \|\chi_\pm u_\omega\|_{H^{s, \beta}(\Omega)} \le C\|\chi_0 u_\omega\|_{H^s(\Omega)}.
    \end{equation}
\end{proposition}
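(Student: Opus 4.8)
The plan is to reduce \eqref{eq:lowdecay} to an elementary one-dimensional integral inequality, uniform in the transverse mode, by exploiting the explicit Fourier-sine representation of $u_\omega$ in the ends. By the symmetry $\chi_-(x_1)=\chi_+(-x_1)$ together with the invariance of $P(\omega)$ under $x_1\mapsto-x_1$, it suffices to treat $\chi_+u_\omega$. As in the proof of Proposition~\ref{prop:high_decay}, translate $x_1$ so that $\supp f$ and $\supp G$ lie in $\{x_1<-1\}$, $\supp\chi_+\subset(0,\infty)$ with $\chi_+\equiv1$ on $(1,\infty)$, and $\chi_0\equiv1$ on $(-1,1)$. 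For $x_1>-1$ the channel is the straight strip $\R\times(-\pi,0)$ and $P(\omega)u_\omega=0$ there, so
\[
u_\omega(x_1,x_2)=\sum_{k\in\N}a_k\,\sin(kx_2)\,e^{-ic_\omega k(x_1+1)},\qquad x_1>-1,
\]
where, uniformly for $\epsilon$ small and $\lambda\in\mathcal J$, we have $\Re c_\omega\ge\delta_\lambda>0$, $\Im c_\omega\le0$, and $|c_\omega|$ bounded; writing $\mu_k\coloneqq k|\Im c_\omega|\ge0$ gives $|e^{-ic_\omega k(x_1+1)}|^2=e^{-2\mu_k(x_1+1)}$.

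The crux of the argument is the following \emph{uniform} integral comparison: for every $\beta<-\tfrac12$ there is $C_\beta$ such that
\[
\int_\R \chi_+(x_1)^2\,\langle x_1\rangle^{2\beta}\,e^{-2\mu(x_1+1)}\,\md x_1\ \le\ C_\beta\int_{-1/4}^{1/4}e^{-2\mu(x_1+1)}\,\md x_1\qquad\text{for all }\mu\ge0,
\]
and likewise with $\chi_+^2\langle x_1\rangle^{2\beta}$ replaced by $(\partial_{x_1}^j\chi_+)^2\langle x_1\rangle^{2\beta}$ or by $\chi_+^2\langle x_1\rangle^{2\beta-2j}$ (these only make the left side smaller). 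I would prove this by the dichotomy $\mu\le1$ versus $\mu\ge1$: when $\mu\le1$ the left side is at most $\int_0^\infty\langle x_1\rangle^{2\beta}\,\md x_1<\infty$ --- this is exactly where $\beta<-\tfrac12$ enters --- while the right side is bounded below by a fixed positive constant; when $\mu\ge1$ one uses $\langle x_1\rangle^{2\beta}\le1$ and $\supp\chi_+\subset(0,\infty)$ to bound the left side by $\tfrac1{2\mu}e^{-2\mu}$, while the right side equals $\tfrac1{2\mu}e^{-3\mu/2}(1-e^{-\mu})\ge\tfrac{1-e^{-1}}{2\mu}e^{-3\mu/2}\ge\tfrac{1-e^{-1}}{2\mu}e^{-2\mu}$. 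Conceptually: the exponential decay in $x_1$ (present only for $\epsilon>0$) is never an obstruction, and when it is weak one is rescued precisely by integrability of the weight $\langle x_1\rangle^{2\beta}$ at spatial infinity.

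It then remains to assemble the estimate mode by mode. Since the functions $\sin(kx_2)$ are orthogonal Dirichlet eigenfunctions of $-\partial_{x_2}^2$ on $(-\pi,0)$, on the strip the $\bar H^s$-norm of $\sum_k g_k(x_1)\sin(kx_2)$ is comparable to $\big(\sum_k\|g_k\|_{H^s(\R)}^2+(1+k)^{2s}\|g_k\|_{L^2(\R)}^2\big)^{1/2}$, so no cross terms between distinct $k$ appear. Applying this with $g_k=a_k\langle x_1\rangle^\beta\chi_+ e^{-ic_\omega k(\cdot+1)}$, and noting that each $x_1$-derivative falling on $e^{-ic_\omega k(x_1+1)}$ produces a factor of size $|c_\omega|k=O(k)$ while derivatives landing on $\langle x_1\rangle^\beta\chi_+$ leave factors absorbed by the integral comparison above, one obtains (for $s\ge0$)
\[
\|\chi_+u_\omega\|_{\bar H^{s,\beta}(\Omega)}^2\ \lesssim\ \sum_{k\in\N}(1+k)^{2s}\,|a_k|^2\int_{-1/4}^{1/4}e^{-2\mu_k(x_1+1)}\,\md x_1 ,
\]
the case of general $s$ following by the analogous bookkeeping with the dual mode expansion. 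On the other hand, pick $\tilde\chi_0\in\CIc(\R)$ with $\supp\tilde\chi_0\subset(-1/2,1/2)$ and $\tilde\chi_0\equiv1$ on $(-1/4,1/4)$; since $\chi_0\equiv1$ on $\supp\tilde\chi_0$ and multiplication by $\tilde\chi_0$ is bounded on $\bar H^s(\Omega)$, we have $\|\chi_0u_\omega\|_{\bar H^s(\Omega)}\gtrsim\|\tilde\chi_0u_\omega\|_{\bar H^s(\Omega)}$, and the same mode-by-mode computation in the strip gives the matching lower bound
\[
\|\tilde\chi_0u_\omega\|_{\bar H^s(\Omega)}^2\ \gtrsim\ \sum_{k\in\N}(1+k)^{2s}\,|a_k|^2\int_{-1/4}^{1/4}e^{-2\mu_k(x_1+1)}\,\md x_1 .
\]
Chaining the last three displays proves \eqref{eq:lowdecay} in the right end; the left end is identical after the reflection $x_1\mapsto-x_1$.

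The only genuinely non-routine ingredient is the uniform-in-$\mu$ (equivalently, uniform-in-$\epsilon$) integral comparison of the second paragraph: the solution $u_\omega$ does not decay at all in the ends when $\epsilon=0$, and the entire content of the proposition is that the weight $\langle x_1\rangle^{\beta}$ with $\beta<-\tfrac12$ is exactly enough to compensate. Everything else is bookkeeping with the explicit Fourier series together with orthogonality of the transverse modes.
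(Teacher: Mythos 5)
Your reduction to a single $\mu$-uniform integral inequality is a clean and correct way to package the uniformity in $\epsilon$, and both the translation setup and the dichotomy argument ($\mu\le1$ versus $\mu\ge1$) are sound. The gap is in the mode-by-mode assembly, and it is precisely the subtlety flagged in the Remark following this proposition in the paper. You assert that the $\bar H^s(\Omega)$-norm of $\sum_k g_k(x_1)\sin(kx_2)$ is \emph{two-sided} comparable to $\bigl(\sum_k\|g_k\|_{H^s(\R)}^2+(1+k)^{2s}\|g_k\|_{L^2(\R)}^2\bigr)^{1/2}$. Only the upper bound ($\bar H^s$-norm $\lesssim$ mode-sum) holds in general; the lower bound fails once $s\ge 5/2$, because the mode-sum is the norm in the domain of $(-\Delta_{\mathrm{Dir}})^{s/2}$, which is a strict subspace of $\bar H^s\cap H^1_0$. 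Concretely, a smooth $\varphi$ with $\varphi(x_2)=x_2^2$ near an endpoint lies in $\bar H^4\cap H^1_0$ but has $\sum_k k^8|\hat\varphi(k)|^2=\infty$. Your chain needs the \emph{lower} bound to deduce
\[
\|\tilde\chi_0 u_\omega\|_{\bar H^s(\Omega)}^2\ \gtrsim\ \sum_{k}(1+k)^{2s}|a_k|^2\int_{-1/4}^{1/4}e^{-2\mu_k(x_1+1)}\,\md x_1,
\]
and that inequality is false in general, so the final chaining is unjustified.

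The fix is exactly what the paper does: replace $\|\tilde\chi_0 u_\omega\|_{\bar H^s(\Omega)}$ by $\|\,|\partial_{x_1}|^s(\tilde\chi_0 u_\omega)\|_{L^2(\Omega)}$ as the intermediate quantity. Since $|\partial_{x_1}|^s$ does not touch the $\sin(kx_2)$ factors, Plancherel in $x_2$ applies exactly and gives
\[
\|\,|\partial_{x_1}|^s(\tilde\chi_0 u_\omega)\|_{L^2(\Omega)}^2 \;=\; c\sum_k\big\|\,|\partial_{x_1}|^s\bigl(a_k\tilde\chi_0 e^{-ic_\omega k(\cdot+1)}\bigr)\big\|_{L^2(\R)}^2 \;\gtrsim\; \sum_k k^{2s}|a_k|^2\int\tilde\chi_0^2\,e^{-2\mu_k(x_1+1)}\,\md x_1,
\]
and then one closes with the trivial one-sided bound $\|\,|\partial_{x_1}|^s(\tilde\chi_0 u_\omega)\|_{L^2}\le\|\tilde\chi_0 u_\omega\|_{H^s}$. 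With this substitution your argument (and in particular your integral comparison, which nicely streamlines the paper's $\int_0^\infty\langle x_1\rangle^{2\beta}\,\md x_1$ step) goes through and matches the paper's proof.
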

\begin{proof}
    Again, we focus on the right end of the domain and employ the change of coordinates $x_1 \mapsto x_1 + M + 1$ so that we have a series solution of the form~\eqref{eq:u_series}. For an even positive integer $s=2\ell$, applying $(-\pa_{x_1}^2-\pa_{x_2}^2)^\ell$ to the series solution \eqref{eq:u_series} and using the Plancherel identity in $x_2$ then yields for $\beta< -\frac 12$
    $$
    \|\chi_+ u_\omega\|_{H^{2\ell, \beta}(\Omega)}^2 \lesssim \sum_{k \in \N} k^{4\ell} |a_k|^2 e^{2(\Im c_\omega) k} \int (\chi_+(x_1))^2 \langle x_1 \rangle^{2\beta} e^{(2\Im c_\omega) k x_1} \, \md x_1.
    $$
    More generally, interpolation and duality shows that for any $s\in \RR$,
    for $\beta < -\frac{1}{2}$,
    \begin{equation}\label{eq:x1deriv}
    \begin{aligned}
        \|\chi_+ u_\omega\|_{H^{s, \beta}(\Omega)}^2 &\lesssim \sum_{k \in \N} k^{2s} |a_k|^2 e^{2(\Im c_\omega) k} \int (\chi_+(x_1))^2 \langle x_1 \rangle^{2\beta} e^{(2\Im c_\omega) k x_1} \, \md x_1 \\
        &\le \sum_{k \in \N} k^{2s} |a_k|^2 e^{2(\Im c_\omega)k} \int_0^\infty \langle x_1 \rangle^{2\beta} \, \md x_1 \\
        &\le C \sum_{k \in \N} k^{2s} |a_k|^2 \int \chi_0(x_1)^2 e^{2(\Im c_\omega) k(x_1 + 1)}\, \md x_1 \\
        &\le C \||\partial_{x_1}|^s (\chi_0 u_\omega)\|^2_{L^2} \\
        &\le C \|\chi_0 u_\omega\|^2_{H^s}.
    \end{aligned}
    \end{equation}
    We note that the constant $C$ here may depend on $\beta$, but is uniform in $\omega \in \mathcal J + i(0, \infty)$. The proof for the estimate in the left end of the domain follows by the same argument with minor sign changes.
\end{proof}

\begin{Remark}
    We point out a slight subtlety in the second to last inequality in~\eqref{eq:x1deriv}. It is extremely important that we take $x_1$ derivatives only. This is because if we look in the $x_2$ direction, the $H^1_0([0, \pi]) \cap \bar H^s([0, \pi])$ norm is not given by the weighted sum of square of the Fourier sine coefficients when $s$ is large; the latter is better interpreted as the norm in the domain of the $s/2$'th power of the Dirichlet Laplacian, which is in general a \emph{subspace} of the corresponding Sobolev space. As an example, consider $\varphi \in H^1_0([0, \pi]) \cap \bar H^4 ([0, \pi])$. Then we can expand $f(x) = \sum_{k \in \N} a_k \sin(kx)$. However, $\|\varphi\|_{H^4}^2 \neq \sum_{k \in \N} k^8 |a_k|^2$. For instance, one can consider a smooth extendable function on $[0, 1]$ such that $\varphi(x) = x^2$ for $x < 1/2$. Then one can check that $\sum_{k \in \N} k^8 |a_k|^2$ is not summable since the odd extension of $f$ to a periodic function on $(-\pi, \pi)$ does not lie in $H^4$. However, the Fourier characterization for the Sobolev norm is equivalent up to $H^1_0([0, \pi]) \cap H^{s}([0, \pi])$ for $s < 5/2$ (this follows from~\cite[Chapter 4, Lemma 5.4]{Ta:10}). In general, we only have the upper bound 
    \[\|\varphi\|_{H^s([0, \pi])}^2 \le \sum_{k \in \N} k^{2s} |\hat \varphi(k)|^2\]
    for $\varphi \in H^1_0([0, \pi]) \cap \bar H^s([0, \pi])$.
\end{Remark}

\subsection{Derivatives in the spectral parameter}
By the spectral theorem, $u_\omega$ is holomorphic in $\omega = \lambda + i \epsilon$ when $\epsilon > 0$. To obtain higher regularity of the spectral measure, we also need to understand derivatives in the spectral parameter. Observe that for $j \ge 1$, we have $\partial_\omega^j (P(\omega) u_\omega) = 0$, so 
\begin{equation}\label{eq:Pdu}
\begin{aligned}
    P(\omega) (\partial_\omega^j u_\omega) &= -\sum_{l = 1}^{j} \binom{j}{l} (\partial_\omega^l P(\omega))(\partial_\omega^{j - l} u_\omega) \\
    &= 2j\omega \Delta (\partial_\omega^{j - 1} u_\omega) + j(j - 1)\Delta (\partial_\omega^{j - 2} u_\omega).
\end{aligned}
\end{equation}
Inductively, it is then easy to see that $\partial_\omega u_\omega \in H^1_0(\Omega)$ is the unique solution to~\eqref{eq:Pdu} with $j=1$, that is,
\[ P(\omega) \partial_\omega u_\omega = 2\omega \Delta u_\omega, \ \partial_\omega u_\omega|_{\partial\Omega}=0. \]
We can again write explicit series solutions in the left and right ends of the domain and obtain analogous high and low frequency decay estimates to Proposition~\ref{prop:high_decay} and Proposition~\ref{eq:lowdecay}. 

\begin{proposition}\label{prop:high_decay_deriv}
    Let $u_\omega \in H^1_0(\Omega)$ be a solution to \eqref{eq:P(w)u}, and let $v_\omega$ denote the corresponding Neumann data on $\partial \Omega_\bullet \simeq \R$, $\bullet = \uparrow, \downarrow$. Choose cutoffs $\chi_\pm$ and $\chi_0$ as in Proposition~\ref{prop:high_decay}. Then for any $s, \alpha, N \in \R$, 
    \[\|\widetilde \Pi_\pm \chi_\pm \partial_\omega v_\omega\|_{H^{s, \alpha}(\R)} \le C\big(\|u_\omega\|_{H^{2, -N}(\Omega)} + \|\partial_\omega u_\omega\|_{H^{2, -N}(\Omega)} + \norm{ \chi_\pm v_\omega}_{H^{-N,-N}(\R)}\big).\]
\end{proposition}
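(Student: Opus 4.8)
The plan is to mimic the proof of Proposition~\ref{prop:high_decay}, using the explicit series representations in the ends, but now for the equation $P(\omega)\partial_\omega u_\omega = 2\omega\Delta u_\omega$ rather than the homogeneous equation. First I would recall the series solution~\eqref{eq:u_series} for $u_\omega$ in the right end (after the translation $x_1 \mapsto x_1 + M + 1$ so that $\supp G$ and $\supp f$ lie in $\{x_1 < -1\}$), namely $u_\omega = \sum_k a_k \sin(kx_2) e^{-ic_\omega k(x_1+1)}$. Differentiating the series termwise in $\omega$ and using that $\partial_\omega c_\omega = -c_\omega^{-1}\omega^{-3} = \mathcal O(1)$ is bounded and bounded away from zero issues do not arise, we get
\[
\partial_\omega u_\omega(x_1,x_2) = \sum_{k\in\N} \big(\partial_\omega a_k - i k (x_1+1)(\partial_\omega c_\omega) a_k\big)\sin(kx_2) e^{-ic_\omega k(x_1+1)}, \quad x_1 > -1.
\]
Here the coefficients $\partial_\omega a_k$ are not free: since $\partial_\omega u_\omega$ is the unique $H^1_0$ solution to the inhomogeneous equation, and the right-hand side $2\omega\Delta u_\omega$ is supported in $\{x_1 < -1\}$, in the right end $\partial_\omega u_\omega$ still solves the homogeneous equation, so it is a combination of the same exponential modes $e^{-ic_\omega k(x_1+1)}$ (decaying modes only), but now with a polynomial-in-$x_1$ prefactor arising from the $\omega$-derivative hitting the exponent. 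The Neumann data $\partial_\omega v_\omega$ on $\partial\Omega_\uparrow$ in the right end is obtained by applying $\partial_{x_2}$ and restricting to $x_2 = 0$, giving $\partial_\omega v_\omega(x_1) = \sum_k k\big(\partial_\omega a_k - ik(x_1+1)(\partial_\omega c_\omega)a_k\big) e^{-ic_\omega k(x_1+1)}$.

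Next I would carry out the frequency-space estimate exactly as in Proposition~\ref{prop:high_decay}. As there, it suffices to estimate $\|\widetilde\Pi_+ \langle x_1\rangle^\alpha \chi_+ w\|_{L^2}$ where $w$ is $\partial_\omega v_\omega$ (or a power of $-c_\omega^{-1}D_{x_1}$ applied to it, to trade $s$-regularity for powers of $k$); the commutator $[\langle x_1\rangle^\alpha, \widetilde\Pi_+]\in\Psi^{-\infty,-\infty}$ handles the weight, and the error term $\|\chi_\pm v_\omega\|_{H^{-N,-N}}$ absorbs the contribution for $x_1$ near the interface. The key point is that $\widehat{\langle x_1\rangle^\alpha\chi_+}(\xi - i\eta)$ decays rapidly in $\xi$ on $\supp h_+$ for $\eta \ge 0$ (integration by parts), and since $\Re c_\omega \ge \delta_\lambda > 0$, the shifted arguments $\xi + c_\omega k$ produce the factors $\langle \xi\rangle^{-K/2}\langle k\delta_\lambda\rangle^{-K/2}$. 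The only new feature is the extra factor $k(x_1+1)$ multiplying one family of modes: but $\chi_+$ has compact support in... no, $\chi_+$ does not have compact support, so instead I absorb the polynomial $x_1+1$ into the weight, replacing $\langle x_1\rangle^\alpha\chi_+$ by $\langle x_1\rangle^{\alpha+1}\chi_+$, which changes nothing qualitatively since $\alpha$ is arbitrary. The extra factor of $k$ is likewise harmless after taking $K$ large. Summing with Cauchy--Schwarz yields a bound by
\[
\sum_k \langle k\rangle^{2s'} \big(|\partial_\omega a_k|^2 + |a_k|^2\big) e^{2k\Im c_\omega}
\]
for an appropriate $s'$; note $\Im c_\omega \le 0$ for small $\epsilon$ so the exponential is $\le 1$.

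Then I would bound the two sums by the claimed right-hand side. As in the last steps of Proposition~\ref{prop:high_decay}, $\sum_k \langle k\rangle^{2s'} |a_k|^2 e^{2k\Im c_\omega}$ is controlled by $\|\chi_0 u_\omega\|_{H^{s'}}^2 \lesssim \|u_\omega\|_{H^{2,-N}}^2$ (or a higher Sobolev norm; since the statement allows any $s$, and $u_\omega\in\overline C^\infty$, this is fine — one uses elliptic regularity to upgrade, or more simply one notes the RHS in the proposition is stated with $H^{2,-N}$ but the constant $C$ may depend on $s$, so one uses $\|\chi_0 u_\omega\|_{H^{s'}}\lesssim \|u_\omega\|_{H^{2,-N}}$ via the series $\sum k^{2s'}|a_k|^2 \lesssim \sum k^{\text{(bounded)}}|a_k|^2 e^{2k\Im c_\omega} \lesssim \|\chi_0 u_\omega\|^2$ which holds because $e^{2k\Im c_\omega}$ alone gives the Plancherel lower bound for $\|\chi_0 u_\omega\|_{L^2}^2$ as in~\eqref{eq:x1deriv}, and differentiating gains $k^2$-type weights absorbed by more derivatives). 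For the $\partial_\omega a_k$ sum, the same Plancherel argument applied to $\partial_\omega u_\omega$ in the region $\{|x_1|<1\}$: writing $\partial_\omega u_\omega = \sum_k (\partial_\omega a_k + \text{(small, involving } a_k)) \sin(kx_2) e^{-ic_\omega k(x_1+1)}$ on $(-1,1)$, and using $\|\chi_0 \partial_\omega u_\omega\|_{L^2}^2 \gtrsim \sum |\partial_\omega a_k|^2 e^{2k\Im c_\omega} \cdot (k|\Im c_\omega|)^{-1}\sinh(2k|\Im c_\omega|) \ge \sum|\partial_\omega a_k|^2 e^{2k\Im c_\omega}$, minus a cross term handled by Cauchy--Schwarz and the $a_k$ bound, gives $\sum\langle k\rangle^{2s'}|\partial_\omega a_k|^2 e^{2k\Im c_\omega}\lesssim \|\partial_\omega u_\omega\|_{H^{2,-N}}^2 + \|u_\omega\|_{H^{2,-N}}^2$.

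The main obstacle, and the only genuinely new ingredient relative to Proposition~\ref{prop:high_decay}, is bookkeeping the polynomial-in-$x_1$ prefactor $k(x_1+1)$ correctly: one must verify it does not destroy the rapid decay of $\widehat{\langle x_1\rangle^\alpha\chi_+}$ on $\supp h_+$ (it does not, since multiplication by $x_1+1$ in physical space is $\partial$-in-frequency, preserving Schwartz-type decay on the frequency support), and that the extra power of $k$ is absorbed by choosing the integration-by-parts order $K$ large — exactly the same mechanism already used to pass from $v_\omega$ to $\tilde v_\omega = (-c_\omega^{-1}D_{x_1})^s v_\omega$ in the earlier proof. I would therefore present this as: ``The proof is identical to that of Proposition~\ref{prop:high_decay}, except that the series for $\partial_\omega v_\omega$ in the right end picks up an additional factor of $k(x_1+1)$ on half the terms, coming from $\partial_\omega$ hitting the exponential $e^{-ic_\omega k(x_1+1)}$; this factor is absorbed by replacing $\langle x_1\rangle^\alpha$ with $\langle x_1\rangle^{\alpha+1}$ and increasing the number of integrations by parts, and the resulting coefficient sums $\sum\langle k\rangle^{2s'}(|a_k|^2+|\partial_\omega a_k|^2)e^{2k\Im c_\omega}$ are controlled via the Plancherel identity in $x_2$ by $\|u_\omega\|_{H^{2,-N}}^2 + \|\partial_\omega u_\omega\|_{H^{2,-N}}^2$ exactly as in the last steps of Proposition~\ref{prop:high_decay}.''
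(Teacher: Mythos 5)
Your proposal matches the paper's proof in all essentials: differentiating the series solution in $\omega$, observing the linear-in-$x_1$ prefactor and the extra factor of $k$, absorbing these via the weight and extra integrations by parts, and closing via Plancherel in $x_2$ with both $\|\chi_0 u_\omega\|_{L^2}$ and $\|\chi_0\partial_\omega u_\omega\|_{L^2}$. The one place you are a bit terse — handling the cross term between $b_k$ and $k(x_1+1)a_k$ — is exactly where the paper records the pointwise lower bound $|a_k|^2 + |-i\beta k(x_1+1)a_k + b_k|^2 \gtrsim \langle k\rangle^{-2}(|a_k|^2 + |b_k|^2)$, which is the clean version of your Cauchy--Schwarz absorption and explains why the $\langle k\rangle^{-2}$ loss (and hence the large integration-by-parts order $N$) is needed.
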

\begin{proof}
    We focus on the right end and use the change of coordinates $x_1 \mapsto x_1 + M + 1$. Recall that $u_\omega$ has the series solution~\eqref{eq:u_series}. Since the solution is analytic in $\omega$ we can differentiate \eqref{eq:u_series} to see that, for $x_1>1$, the series solution for $\partial_\omega u_\omega$ is given by  
    \begin{equation}\begin{split}\label{eq:deriv_series}
        \partial_\omega u_\omega(x_1, x_2) 
        = &  \sum_{k \in \N} -\frac{i k a_k}{\omega^2 \sqrt{1 - \omega^2}}(x_1 + 1) \sin(kx_2) e^{-ic_\omega k(x_1 + 1)} \\
        & + \sum_{k \in \N} b_k \sin(kx_2) e^{-ic_\omega k(x_1 + 1)}
    \end{split}\end{equation}
    with $b_k \coloneqq \partial_\omega a_k$. The Neumann data on $\partial\Omega_{\uparrow}$ for $x_1>1$ is then given by 
    \[\partial_\omega v_\omega(x_1) = \sum_{k \in \N} -\frac{i k^2 a_k}{\omega^2 \sqrt{1 - \omega^2}}(x_1 + 1) e^{-ic_\omega k(x_1 + 1)} + \sum_{k \in \N} k b_k e^{-ic_\omega k(x_1 + 1)}.\]
    Taking derivatives, we consider, with $s\in \mathbb N_0$ and $x_1>1$,
    \[\tilde v(x_1) \coloneqq \Big( - \frac{1}{c_\omega} D_{x_1} \Big)^s \partial_\omega v_\omega(x_1) = \sum_{k \in \N} ( \tilde a_k(x_1 + 1) + \tilde b_k) e^{-i c_\omega k x_1} \]
    where 
    \[\tilde a_k \coloneqq -\frac{i}{\omega^2 \sqrt{1 - \omega^2}} k^{s + 2} a_k e^{-i c_\omega k}, \quad \tilde b_k \coloneqq k^{s + 1} b_k e^{-ic_\omega k}.\]
    Following the proof of Proposition~\ref{prop:high_decay}, we find the uniform pointwise bound 
    \[|\mathcal F^{-1} \left(h_+(\bullet) \mathcal F(|x_1|^{\alpha + N} \chi_+ \tilde v)\right)|^2 \lesssim \sum_{k \in \N} \big(|\tilde a_k|^2 + |\tilde b_k|^2\big) \langle k \rangle^{-N + 2}\]
    for any $N \in \R$. 
    Indeed, integrating by parts with respect to $((\xi+c_\omega k)D_{x_1})^{N_1}$ gives 
    \[\begin{split} 
    |\mathcal F(|x_1|^{\alpha+N} \chi_+ \tilde v)(\xi)| 
    \leq & \sum_{k\in \mathbb N} \frac{1}{|\xi+c_\omega k|^{N_1}}\int \left| D_{x_1}^{N_1}\left( \chi_+ |x_1|^{\alpha + N}(\tilde a_k(x_1+1) + \tilde b_k) \right) \right| \, \md x_1 \\
    \lesssim & \sum_{k\in \mathbb N} \langle \xi+k \Re c_\omega \rangle^{-N_1}(|\tilde a_k| + |\tilde b_k|)
    \end{split}\]
    for $N_1\in \mathbb N$ large enough and $\xi\in \supp h_+$. Therefore 
    \[\begin{split} 
    & |\mathcal F^{-1} \left(h_+(\bullet) \mathcal F(|x_1|^{\alpha + N} \chi_+ \tilde v)\right)|^2 \\
    & \quad \lesssim \|h_+\mathcal F(|x_1|^{\alpha+N}\chi_+ \tilde v)\|_{L^1(\mathbb R)}^2 \\
    & \quad \leq \sum_{k\in \mathbb N} \langle k \rangle^{-N+2}(|\tilde a_k|^2 + |\tilde b_k|^2) \sum_{k\in \mathbb N} \langle k \rangle^{N-2}\left( \int \frac{h_+(\xi) \,\md \xi}{\langle \xi + k \Re c_\omega \rangle^{N_1}} \right)^2\\
    & \quad \lesssim \sum_{k\in \mathbb N} \langle k \rangle^{-N+2}(|\tilde a_k|^2 + |\tilde b_k|^2).
    \end{split}\]

    Now taking $N > 2s + 8$, it follows that 
    \[\begin{split} 
        & \|\langle x_1 \rangle^{-N}\widetilde \Pi_+\langle x_1 \rangle^{\alpha+N}\chi_+\tilde v\|^2_{L^2} \\
        & \quad \leq \sum_{k\in \mathbb N}\langle k \rangle^{2s+6-N}( |a_k|^2 + |b_k|^2 ) e^{k \Im c_\omega} \leq \sum_{k\in \mathbb N} \langle k\rangle^{-2} (|a_k|^2 + |b_k|^2) e^{2 k\Im c_\omega}. 
    \end{split}\]
    Notice that for all $x_1\in (-1,1)$, $k\in \mathbb N$,
    \[ |a_k|^2 + \left| -\frac{i k a_k}{\omega\sqrt{1-\omega^2}}(x_1+1) + b_k \right|^2 \gtrsim \langle k \rangle^{-2}(|a_k|^2+|b_k|^2), \]
    hence we estimate 
    \[ \|\chi_0 u_\omega\|^2_{L^2}+\| \chi_0\partial_\omega u_\omega \|_{L^2}^2 \gtrsim \sum_{k\in \mathbb N} \langle k \rangle^{-2}(|a_k|^2 + |b_k|^2)\frac{1-e^{4 k \Im c_\omega}}{k |\Im c_\omega|}.  \]
    Now we conclude that
    \begin{equation*}
        \|\langle x_1 \rangle^{-N} \widetilde \Pi_+ \langle x_1 \rangle^{\alpha + N} \chi_+ \tilde v\|_{L^2} \lesssim \|\chi_0 u_\omega\|_{L^2} + \|\chi_0 \partial_\omega u_\omega\|_{L^2}. 
    \end{equation*}
    Unwinding the definitions, we conclude that 
    \[\|\widetilde \Pi_+ \chi_+ \partial_\omega v_\omega \| \lesssim \|u_\omega\|_{H^{2, -N}(\Omega)} + \|\partial_\omega u\|_{H^{2, -N}(\Omega)} + \norm{ \chi_+ v_\omega}_{H^{-N,-N}(\R)}.\]
    Again, the left end estimates follow with minor sign changes in the argument. 
\end{proof}

We also have the low frequency decay estimates for the differentiated solution. In this case, the threshold condition changes.
\begin{proposition}\label{prop:low_decay_deriv}
    Let $u_\omega \in H^1_0(\Omega)$ be a solution to equation~\eqref{eq:P(w)u}, and let $\chi_\pm$ and $\chi_0$ be the same cutoffs as in Proposition~\ref{prop:high_decay}. Then for any $\beta < -\frac{3}{2}$, we have
    \begin{equation}\label{eq:low_decay_deriv}
        \|\chi_\pm \partial_\omega u_\omega\|_{H^{s, \beta}(\Omega)} \le C\|\chi_0 \partial_\omega u_\omega\|_{H^{s + 1}(\Omega)} + \|\chi_0 u_\omega\|_{H^s(\Omega)}.
    \end{equation}
\end{proposition}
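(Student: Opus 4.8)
\emph{Proof proposal.} The plan is to rerun the proof of Proposition~\ref{prop:low_decay}, now using the series \eqref{eq:deriv_series} for $\partial_\omega u_\omega$ in place of \eqref{eq:u_series}; both the weakening of the weight threshold (from $\beta<-1/2$ to $\beta<-3/2$) and the extra derivative on the right‑hand side will come from the factor $(x_1+1)$ in \eqref{eq:deriv_series}. As before it is enough to treat the right end, so after the translation $x_1\mapsto x_1+M+1$ we have $\supp G,\supp f\subset\{x_1<-1\}$, $\chi_+\equiv1$ on $\{x_1>1\}$ with $\supp\chi_+\subset\{x_1>0\}$, $\chi_0\equiv1$ on $(-1,1)$, $\Im c_\omega<0$, and on $\{x_1>-1\}$
\[ u_\omega=\sum_k a_k\sin(kx_2)e^{-ic_\omega k(x_1+1)},\qquad \partial_\omega u_\omega=\sum_k w_k(x_1)\sin(kx_2)e^{-ic_\omega k(x_1+1)}, \]
where $w_k(x_1)=-\tfrac{ik}{\omega^2\sqrt{1-\omega^2}}a_k(x_1+1)+b_k$ and $b_k=\partial_\omega a_k$. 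The one new structural feature is that the mode coefficient $w_k$ is now \emph{affine} in $x_1$, with slope of modulus $\sim k|a_k|$ and value $b_k$ at $x_1=-1$, uniformly for $\omega\in\mathcal J+i(0,\infty)$.

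First I would prove the upper bound in the end. Expanding $\chi_+\partial_\omega u_\omega$ in its $x_2$‑series and using Plancherel (together with the Remark after Proposition~\ref{prop:low_decay} to bound the $x_2$‑Sobolev norm, and $e^{2(\Im c_\omega)k(x_1+1)}\le e^{2(\Im c_\omega)k}$ for $x_1>0$) gives $\|\chi_+\partial_\omega u_\omega\|_{H^{s,\beta}(\Omega)}^2\lesssim\sum_k\langle k\rangle^{2s}e^{2(\Im c_\omega)k}\int_0^\infty\chi_+^2\langle x_1\rangle^{2\beta}|w_k|^2\,\md x_1$. Since $|w_k(x_1)|^2\lesssim k^2|a_k|^2(x_1+1)^2+|b_k|^2$ and $\langle x_1\rangle^{2\beta}(x_1+1)^2$ is integrable on $(0,\infty)$ precisely for $\beta<-3/2$ (whereas $\langle x_1\rangle^{2\beta}$ is integrable already for $\beta<-1/2$), this yields, with constant depending on $\beta$ but uniform in $\omega$,
\[ \|\chi_+\partial_\omega u_\omega\|_{H^{s,\beta}(\Omega)}^2\ \lesssim\ \sum_{k}\big(k^{2s+2}|a_k|^2+k^{2s}|b_k|^2\big)e^{2(\Im c_\omega)k}. \]
On the other side, differentiating \emph{only} in $x_1$ and arguing exactly as in \eqref{eq:x1deriv} (Plancherel for sine series) produces $\|\chi_0 u_\omega\|_{H^s(\Omega)}^2\gtrsim\sum_k k^{2s}|a_k|^2e^{2(\Im c_\omega)k}$ and $\|\chi_0\partial_\omega u_\omega\|_{H^{s+1}(\Omega)}^2\gtrsim\sum_k k^{2s+2}\int_{-1}^{1}|w_k(x_1)|^2e^{2(\Im c_\omega)k(x_1+1)}\,\md x_1$; for the second, the leading term of $\partial_{x_1}^{s+1}\big(\chi_0 w_k(x_1)e^{-ic_\omega k(x_1+1)}\big)$ is $(c_\omega k)^{s+1}$ times the function, and on $\{\chi_0\equiv1\}$ the only Leibniz correction is the single $\partial_{x_1}$ falling on the affine factor $w_k$, which is lower order in $k$. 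Matching these three displays reduces everything to the pointwise‑in‑$k$ inequality $k^2\int_{-1}^{1}|w_k(x_1)|^2e^{2(\Im c_\omega)k(x_1+1)}\,\md x_1\gtrsim\big(k^2|a_k|^2+|b_k|^2\big)e^{2(\Im c_\omega)k}$; this is also the source of the extra $H^{s+1}$‑derivative, since extracting $|a_k|$ from $\partial_\omega u_\omega$ near $x_1=0$ costs the power of $k$ carried by the slope of $w_k$. (The term $\|\chi_0 u_\omega\|_{H^s(\Omega)}$ in the statement serves to absorb the finitely many low modes, though the bound below is in fact uniform in $k$.)

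The heart of the matter is that last inequality. Setting $\nu=-2(\Im c_\omega)k>0$ and $t=x_1+1\in(0,2)$, it follows from the elementary claim that for any affine $P$ and any $\nu>0$
\[ \int_0^2|P(t)|^2e^{-\nu t}\,\md t\ \gtrsim\ \big(|P'|^2+|P(0)|^2\big)\,\min\{1,\nu^{-3}\} \]
with constant independent of $\nu$: for $\nu\lesssim1$ the weight is $\gtrsim1$ on $(0,2)$ and the mean of $|P|^2$ over $(0,2)$ is a positive‑definite quadratic form in $(P',P(0))$; for $\nu\gtrsim1$ one bounds below by $\lambda_{\min}(\nu)(|P'|^2+|P(0)|^2)$, where $\lambda_{\min}(\nu)$ is the least eigenvalue of the Gram matrix of $1,t$ against $e^{-\nu t}\,\md t$ on $(0,2)$, and the rescaling $s=\nu t$ shows that this matrix has determinant $\sim\nu^{-4}$ and trace $\sim\nu^{-1}$, hence $\lambda_{\min}(\nu)\sim\nu^{-3}$ — the crucial non‑degeneracy surviving $\nu\to\infty$. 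Taking $P(t)=w_k(t-1)$ (so $|P'|\sim k|a_k|$, $P(0)=b_k$) and using $\nu=2k|\Im c_\omega|\lesssim k$, that is $|\Im c_\omega|\lesssim1$ (valid for $\ep$ small, the large‑$\ep$ case being easier), to get $k^2\min\{1,\nu^{-3}\}\gtrsim e^{-\nu}$, yields the desired bound; the left end is identical up to sign changes.

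I expect the affine estimate of the previous paragraph to be the main obstacle. In Proposition~\ref{prop:low_decay} the mode coefficients of $u_\omega$ are constant in $x_1$, so the bulk term is handled by a crude pointwise bound; here $w_k$ may vanish at an interior point of $(-1,1)$, so one must genuinely exploit its degree‑one polynomial structure, and do so uniformly over $\nu\in(0,\infty)$ as $\ep\downarrow0$. Note that the pointwise bound $|a_k|^2+|w_k(x_1)|^2\gtrsim\langle k\rangle^{-2}(|a_k|^2+|b_k|^2)$ used in the proof of Proposition~\ref{prop:high_decay_deriv} loses a power of $k$ and, after multiplication by $k^{2s+2}$, controls only $k^{2s}(|a_k|^2+|b_k|^2)$ — short of the $k^{2s+2}|a_k|^2$ term — which is precisely why the sharper integrated version is required.
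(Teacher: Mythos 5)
Your proposal is correct, but it proves something slightly different from what the paper's own proof establishes --- and, somewhat paradoxically, something harder. The final display of the paper's proof bounds $\|\chi_\pm\partial_\omega u_\omega\|_{H^{s,\beta}}^2$ by $\|\chi_0 u_\omega\|_{H^{s+1}}^2+\|\chi_0\partial_\omega u_\omega\|_{H^s}^2$, with the Sobolev orders \emph{swapped} relative to \eqref{eq:low_decay_deriv}; this swapped version is also the one plugged into the proof of Proposition~\ref{prop:SFE_diff} to land on $\|f\|_{H^{s+1}}$ (with your version one would get $\|f\|_{H^{s+2}}$ there), so the displayed statement appears to carry a typo. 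The distinction is not cosmetic. With $\|\chi_0 u_\omega\|_{H^{s+1}}$ available on the right, the $\sum_k k^{2s}|b_k|^2 e^{2\Im c_\omega k}$ term is handled by the crude bound $|b_k|^2=|w_k(-1)|^2\lesssim|w_k(x_1)|^2+k^2|a_k|^2(x_1+1)^2$, the pointwise estimate $(x_1+1)^2\le4$ on $(-1,1)$, and $\int_{-1}^1 e^{2\Im c_\omega k(x_1+1)}\,\md x_1\ge e^{2\Im c_\omega k}$; the $k^2|a_k|^2$ piece is simply absorbed into $\|\chi_0 u_\omega\|_{H^{s+1}}$, and no lower bound on $\int_{-1}^1|w_k|^2 e^{-\nu(x_1+1)}\,\md x_1$ beyond the triangle inequality is required. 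Reading the statement literally forces you to extract \emph{both} $k^2|a_k|^2$ and $|b_k|^2$ from $\|\chi_0\partial_\omega u_\omega\|_{H^{s+1}}$ alone, which is exactly why you need the genuinely quantitative coercivity estimate
\[
\int_0^2|P(t)|^2 e^{-\nu t}\,\md t\ \gtrsim\ \big(|P'|^2+|P(0)|^2\big)\min\{1,\nu^{-3}\}
\]
together with $k^2\min\{1,\nu^{-3}\}\gtrsim e^{-\nu}$ for $\nu\lesssim k$. Your Gram-matrix computation ($\det\sim\nu^{-4}$, $\operatorname{tr}\sim\nu^{-1}$, hence $\lambda_{\min}\sim\nu^{-3}$) is correct, so your argument does validly establish the statement as written --- and as you note, even makes the $\|\chi_0 u_\omega\|_{H^s}$ term superfluous --- but it is considerably more machinery than the paper intended for this lemma. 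Two small cautions: (i) your dismissal of the Leibniz correction $w_k'$ as ``lower order in $k$'' relies on $\nu=2k|\Im c_\omega|\ll k$, which holds since the estimate only needs to be uniform as $\Im\omega\to0+$; for $|\Im c_\omega|\asymp1$ that term would be comparable. (ii) Your final remark correctly diagnoses why the pointwise bound used in Proposition~\ref{prop:high_decay_deriv} is too lossy here, but once one reads off the paper's intended (swapped) statement, the sharper integrated estimate is not actually needed.
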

\begin{proof}
    We focus on the right end and change coordinates by $x_1 \mapsto x_1 + M + 1$ so that we have the series solution given by~\eqref{eq:deriv_series}. The desired inequality follows from a similar argument to Proposition~\ref{prop:low_decay} with a different threshold.
   We see that for $\beta < -\frac{3}{2}$, we have 
    \begin{equation*}
    \begin{aligned}
        \|\chi_+ u\|_{H^{s, \beta}(\Omega)}^2 &\lesssim \sum_{k \in \N} \big(k^{2s + 2} |a_k|^2 + k^{2s} |b_k|^2 \big) e^{2 \Im c_\omega k} \\
        &\lesssim \||\partial_{x_1}|^{s + 1} \chi_0 u_\omega\|_{L^2 (\Omega)}^2 + \||\partial_{x_1}|^s \chi_0 \partial_\omega u_\omega\|_{L^2(\Omega)}^2 \\
        &\lesssim \|\chi_0 u\|_{H^{s + 1}(\Omega)}^2 + \|\chi_0 \partial_\omega u_\omega \|_{H^s(\Omega)}^2.
    \end{aligned}
    \end{equation*}
    Left end estimates follow with minor sign changes in the argument. 
\end{proof}

\section{Boundary reduction}\label{sec:reduction}
With the end estimates in place, we now proceed to analyze the ``middle'' region that contains both the topography and the forcing. As in the previous section, we consider the equation~\eqref{eq:P(w)u} with the spectral parameter $\omega = \lambda + i \epsilon \in \mathcal J +i (0, \infty)$ where $\mathcal J \subset(0, 1)$ is a small interval such that $\Omega$ is $\lambda$-subcritical for every $\lambda \in \mathcal J$. 

Our approach for the middle section is based on black box methods in scattering theory (see \cite[\S 4]{DyZw:19} for an account of these methods in the context of Euclidean scattering). This then allows us to perform a reduction to the boundary similar to the strategy in \cite{DyWaZw:21, Li:24}. First, we choose a ``black box" cutoff that is much bigger than both the forcing and the topography. More precisely, choose $M>0$ such that
\begin{equation}\label{eq:bb1}
\begin{gathered}
    M > \frac{10(\pi - \min G)}{c_\lambda} \quad \text{for all} \quad \lambda \in I, \\
    \supp G \subset (-M, M), \quad \supp f \subset \{x \in \Omega: |x_1| < M\}.
\end{gathered}
\end{equation}
There exists an $N > 0$ so that 
\[b_\lambda^{\pm N}\big(\partial \Omega \cap \{|x_1| \le 4M\}\big) \cap \{|x_1| \le 4M\} = \emptyset \quad \text{for all} \quad \lambda \in \mathcal J.\]
In other words, under repeated applications of $b$, the top piece of boundary in $\{|x_1| \le 4M\}$ will keep moving right and the bottom piece will keep moving left, and at some point, both pieces will be disjoint to $\{|x_1| \le 4M\}$. Now choose $L > 0$ sufficiently large so that 
\begin{equation}\label{eq:bb2}
    b^{\pm N}\big(\partial \Omega \cap \{|x_1| \le 4M\}\big) \subset \{|x_1| \le L\}.
\end{equation}
Finally, we choose our black box cutoff $\chi_\mb$ so that 
\begin{equation}\label{eq:bb3}
    \chi_\mb \in \CIc(\R; \R), \quad \chi_\mb(x_1) = 1 \quad \text{for all} \quad |x_1| \le 2L.
\end{equation}
It is important that the black box cutoff is much bigger than the support of the topography and the forcing. This is to ensure that we can run propagation arguments near the support of the topography and the forcing without picking up effects coming from the edge of the black box. 

\begin{figure}[b]
    \centering
    \includegraphics{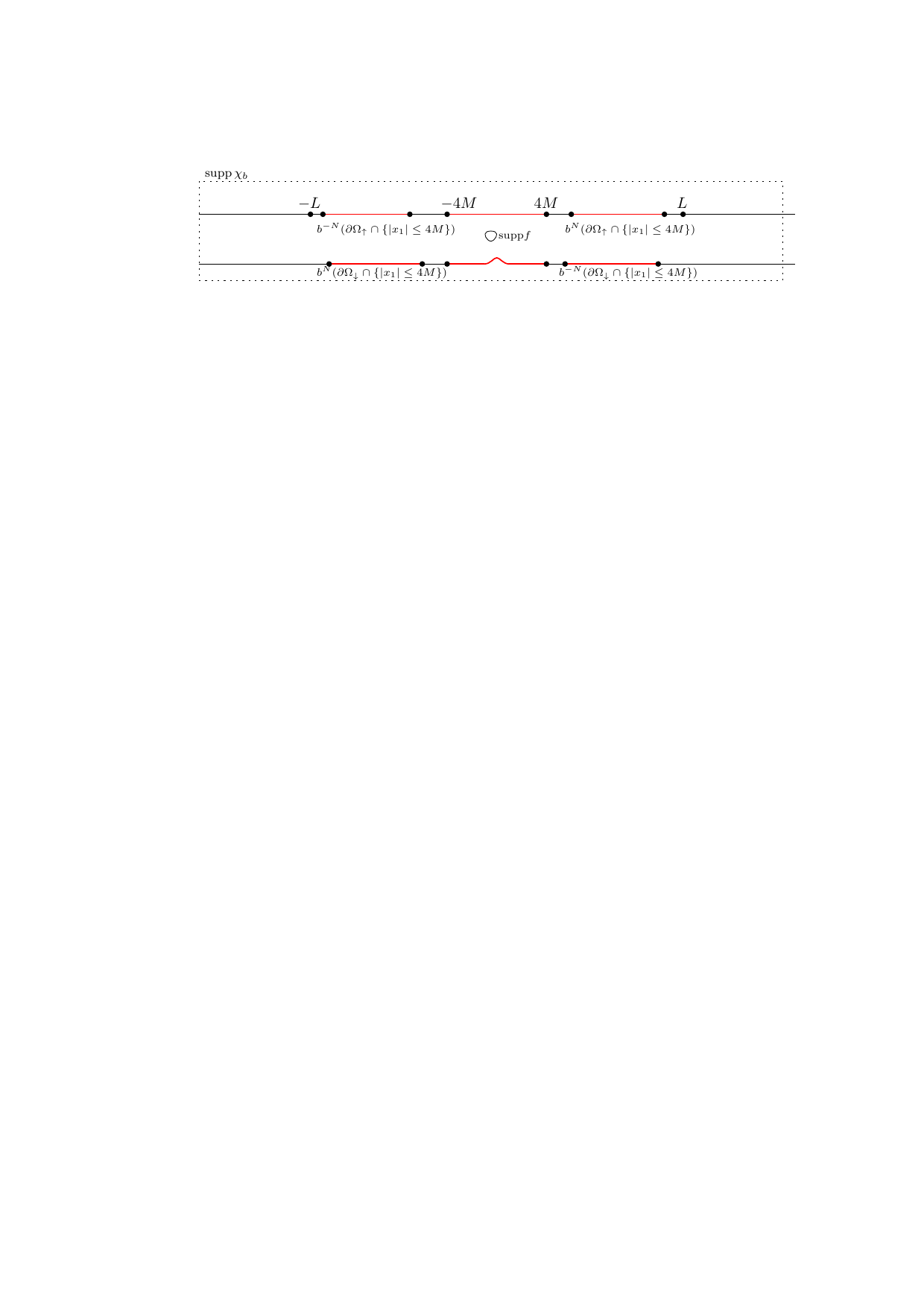}
    \caption{The scales $M,L$ shown relative to the supports of topography and inhomogeneity.}
    \label{fig:t}
\end{figure}

\subsection{Method of layer potentials}
  Recall from Definition~\ref{eq:ellpm} that we have defined coordinates
\begin{equation*}
    \ell^\pm(x, \omega) \coloneqq \pm \frac{x_1}{\omega} + \frac{x_2}{\sqrt{1 - \omega^2}};
\end{equation*}
in this section we extend the definition to allow for $\omega \in \CC$.  We also employ dual differential operators defined by
    \begin{equation}\label{Lpm} L^\pm_\omega \coloneqq \frac{1}{2} (\pm \omega \partial_{x_1} + \sqrt{1 - \omega^2} \partial_{x_2}).\end{equation}
    Thus,
    \[L^\pm_\omega \ell^\pm(x, \omega) = 1, \qquad L_\omega^\mp \ell^\pm(x, \omega) = 0.\]

Let $u_\omega \in H^1_0(\Omega)$ be a solution to~\eqref{eq:P(w)u}. We truncate $u_\omega$ by the black box cutoff $\chi_\mb$ and extend it to a function on $\R^2$ by putting $u_\omega=0$ in $\R^2\setminus \overline{\Omega}$. Then applying $P(\omega)$ and recalling that $u_\omega$ satisfies a Dirichlet boundary condition, we find that 
\begin{equation}\label{eq:P_on_cutoff}
\begin{aligned}
    P(\omega)( \indic_\Omega \chi_\mb u_\omega) &= \indic_\Omega P(\omega) \chi_\mb u_\omega - \mathcal I \mathcal N_\omega \chi_\mb u_\omega \\
    &= \indic_\Omega [P(\omega), \chi_\mb] u_\omega + f - \mathcal I \chi_\mb v_\omega,
\end{aligned}
\end{equation}
where $v_\omega := \mathcal N_\omega u_\omega$ is given by 
\[\mathcal N_\omega u_\omega = - 2 \omega \sqrt{1 - \omega^2} \mathbf j^* (L_\omega^+ u_\omega \md \ell^+(\bullet, \omega))\]
where $\mathbf j: \partial \Omega \to \overline \Omega$ is the embedding map, and $\mathcal I:  \mathcal E'(\partial \Omega; T^* \partial \Omega) \to \mathcal E(\R^2)$ is the unique operator such that 
\[\int_{\R^2} \mathcal I v(x) \varphi(x) \, \md x = \int_{\partial \Omega} \varphi v.\]
In other words, $\mathcal N_\omega$ is a scaled Neumann data operator, and $\mathcal I$ tensors a distribution on $\partial \Omega$ with the $\delta$-function on $\partial \Omega$ to yield a distribution on $\R^2$ supported on $\partial \Omega$. We note that in the left and right ends of $\Omega$, $\mathcal N_\omega$ is simply 
\[\mathcal N_\omega u_\omega = -(1 - \omega^2) \partial_{x_2} u_\omega(x_1, \bullet) \, \md x_1, \quad |x_1| > M, \ \bullet = 0, -\pi,\]
which is indeed a constant multiple of the Neumann data. So all estimates in \S\ref{sec:end} that apply to the Neumann data clearly apply to $\mathcal N_\omega u_\omega$. 

Now we use the fact that $P(\omega)$ on $\R^2$ is a constant coefficient elliptic operator.
The fundamental solution for $P(\omega)$ is given explicitly by 
\begin{equation}\label{eq:FS}
    E_\omega(x) = \kappa_\omega \log\big[ \ell^+(x, \omega) \ell^-(x, \omega)], \quad \kappa_\omega \coloneqq \frac{i \sgn \Im \omega}{4 \pi \omega \sqrt{1 - \omega^2}}
\end{equation}
where we take the branch of log on $\C \setminus (-\infty, 0]$ taking real values on $(0, \infty)$. See \cite[\S4.3]{DyWaZw:21} for details. 

Since $\indic_\Omega \chi_\mb u_\omega$ is compactly supported, convolving~\eqref{eq:P_on_cutoff} with the fundamental solution $E_\omega$ yields
\begin{equation}\label{eq:single_layer}
    \indic_\Omega \chi_\mb u_\omega = E_\omega * (\indic_\Omega [P(\omega), \chi_\mb] u_\omega) + E_\omega * f - S_\omega \chi_\mb v_\omega
\end{equation}
where $S_\omega \coloneqq (E_\omega *) \mathcal I$ is the \emph{single layer potential} that also appears in~\cite{DyWaZw:21, Li:24}. We need some mapping properties for $S_\omega$. 
\begin{lemma}\label{lem:S_mapping}
    Let $\mathcal J \subset (0, 1)$ be an open interval such that $\Omega$ is $\lambda$-subcritical for every $\lambda \in \mathcal J$. Then we have the mapping properties 
    \[S_\omega: H^s_{\comp} (\partial \Omega; T^* \partial \Omega) \to \bar H^{s + 1}_{\loc}(\Omega), \qquad \partial_\omega S_\omega: H^{s}_{\comp}(\partial \Omega; T^* \partial \Omega) \to \bar H_{\loc}^{s}(\Omega)\]
    uniformly in $\omega \in \mathcal J + i(0, \infty)$. 
\end{lemma}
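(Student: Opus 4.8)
The plan is to analyze the single layer potential $S_\omega = (E_\omega *)\mathcal I$ directly from the explicit formula~\eqref{eq:FS} for the fundamental solution. Writing the action of $S_\omega$ on a one-form $v\,\md x_1$ supported on a bounded piece of $\partial\Omega$, we get, for $x$ in a fixed compact set,
\[
(S_\omega v)(x) = \kappa_\omega \int_{\partial\Omega} \log\big[\ell^+(x - y, \omega)\,\ell^-(x - y, \omega)\big]\, v(y),
\]
where $y$ ranges over the (compactly supported part of the) boundary. First I would observe that the kernel $\log\big[\ell^+(x-y,\omega)\ell^-(x-y,\omega)\big]$ is a classical elliptic conormal distribution of order $-2$ (i.e.\ its singularity is logarithmic) on the product of compact pieces, uniformly for $\omega$ in the relevant strip, because $\ell^\pm(z,\omega)$ vanishes only on the two characteristic lines through the origin, these lines are transversal to $\partial\Omega$ by the $\lambda$-subcritical hypothesis (the slopes $\pm c_\lambda$ never match the boundary slope, whose maximum is strictly less than $c_\lambda$ by~\eqref{eq:subcrit}), and $\kappa_\omega$ is bounded and bounded away from zero on $\mathcal J + i(0,\infty)$. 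Thus restricting to $x$ in a compact set and using a partition of unity, $S_\omega$ is (microlocally) an order $-1$ pseudodifferential-type operator mapping $H^s_{\comp}(\partial\Omega; T^*\partial\Omega) \to \bar H^{s+1}_{\loc}(\Omega)$, which gives the first mapping property; uniformity in $\omega$ follows from uniform bounds on the symbol/kernel, together with the off-diagonal decay coming from the fact that $\log[\ell^+\ell^-]$ grows only logarithmically away from the characteristics.

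Next I would handle the $\omega$-derivative. Since $E_\omega(x) = \kappa_\omega \log[\ell^+(x,\omega)\ell^-(x,\omega)]$, we compute
\[
\partial_\omega E_\omega(x) = (\partial_\omega \kappa_\omega)\log[\ell^+ \ell^-] + \kappa_\omega\!\left(\frac{\partial_\omega \ell^+(x,\omega)}{\ell^+(x,\omega)} + \frac{\partial_\omega \ell^-(x,\omega)}{\ell^-(x,\omega)}\right).
\]
The first term is again a log-type (order $-2$ conormal) kernel, so contributes an operator $H^s_{\comp}\to \bar H^{s+1}_{\loc}$, hence certainly into $\bar H^s_{\loc}$. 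The remaining terms are of the form (smooth, bounded) times $1/\ell^\pm(x-y,\omega)$; since $\ell^\pm$ vanishes simply and transversally along each characteristic line meeting $\partial\Omega$ transversally, $1/\ell^\pm(x-y,\omega)$ is a conormal distribution of order $-1$ in the relevant sense (one order worse than the logarithm), which corresponds to an operator of order $0$ mapping $H^s_{\comp}(\partial\Omega)\to \bar H^s_{\loc}(\Omega)$, again uniformly in $\omega$ by the uniform transversality and the uniform bounds on $\kappa_\omega$ and $\partial_\omega\kappa_\omega$. Combining, $\partial_\omega S_\omega : H^s_{\comp}(\partial\Omega; T^*\partial\Omega)\to \bar H^s_{\loc}(\Omega)$ uniformly, as claimed. (In practice one fixes cutoffs $\chi, \psi \in \CIc$ and proves $\chi S_\omega \psi$ and $\chi \partial_\omega S_\omega \psi$ have the stated mapping properties; since the statement is local this suffices.)

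The main obstacle is the uniformity as $\Im\omega \downarrow 0$: for $\Im\omega > 0$ the functions $\ell^\pm(x,\omega)$ are complex-valued and $\log[\ell^+\ell^-]$ is a genuinely smooth-away-from-nothing convolution kernel (there is no real singularity), while in the limit $\omega\to\lambda\in\mathcal J$ the kernel develops conormal singularities along the two real characteristic lines. One must show the relevant symbol/kernel estimates are uniform across this degeneration, i.e.\ that the order $-2$ (resp.\ $-1$) conormal bounds hold with constants independent of $\epsilon = \Im\omega \in (0,\infty)$. This is where the $\lambda$-subcritical condition is essential: it guarantees the characteristic directions stay uniformly transversal to $\partial\Omega$ over the compact interval $\overline{\mathcal J}$, so the change of variables straightening $\ell^\pm(x-y,\lambda)=0$ has uniformly bounded Jacobian, and the perturbation $\ell^\pm(\cdot,\lambda+i\epsilon) - \ell^\pm(\cdot,\lambda) = \mathcal O(\epsilon)$ in $C^\infty$ on compact sets does not destroy these bounds. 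I would also note that the $\kappa_\omega$ prefactor carries $\sgn\Im\omega$, but on $\mathcal J + i(0,\infty)$ this is constant, so no issue arises there. Once uniform transversality and uniform kernel bounds are in hand, the mapping properties are standard for conormal/pseudodifferential operators and the proof is complete.
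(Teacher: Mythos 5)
Your route is genuinely different from the paper's, and it is conceptually sound but leaves the crucial uniformity step at the level of an assertion. You analyze the undifferentiated log kernel $\log[\ell^+\ell^-]$ directly as a conormal distribution along the two characteristic lines and claim that transversality plus ``the perturbation is $\mathcal O(\epsilon)$ and does not destroy the bounds'' yields uniform-in-$\epsilon$ mapping estimates. The paper proceeds differently and more explicitly: it first applies the first-order operators $L^\pm_\omega$ of \eqref{Lpm}, which kill $\ell^\mp$ and differentiate $\ell^\pm$, so that $L^\pm_\omega S_\omega$ has kernel $\kappa_\omega/\ell^\pm(x-y,\omega)$; it then introduces characteristic coordinates $(\theta,\tau)$ on $\overline\Omega$ that straighten the $\pm$-characteristics, so the kernel becomes
\[
\kappa_\omega \big(\psi_\omega(\theta,\theta')(\theta-\theta') - i\epsilon\tau z_\omega(\theta)\big)^{-1},
\]
with $\Re\psi_\omega$ bounded away from $0$ by the subcriticality hypothesis and $\Re z_\omega$ bounded away from $0$. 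In this form one sees \emph{by inspection} that this is a family of zeroth-order right-quantized $\Psi$DOs in $\theta$ depending smoothly on $(\tau,\epsilon)$, uniformly in $\omega$; the $\bar H^{s+1}_{\loc}$ bound on $S_\omega$ then follows because $\partial_{x_j}$ are uniformly bounded combinations of $L^\pm_\omega$. What the paper's route buys is that the delicate uniformity as $\Im\omega\downarrow 0$ — the whole content of the lemma, since for $\epsilon>0$ the kernel is smooth and only in the limit develops its conormal singularities — becomes a concrete, checkable symbol condition rather than an appeal to stability of abstract conormal estimates under a $C^\infty$-small complex deformation. Your route could be made rigorous, but you would essentially have to carry out a comparable computation (e.g.\ writing $\log[\ell^+\ell^-]=\log\ell^+ + \log\ell^-$, straightening, and verifying uniform symbol bounds), at which point there is no real gain over the paper's approach. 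Two further small points: (i) the kernel for $\omega$ real is conormal to a \emph{pair} of transversal lines, not a single submanifold, so a clean argument should split the log before invoking conormal machinery; (ii) your formula for $\partial_\omega E_\omega$ matches the paper's \eqref{eq:dE} once you use the identity $\partial_\omega\ell^\pm = a\,\ell^\pm + b\,\ell^\mp$, which is the step that shows the singular part of $\partial_\omega E_\omega$ is of the form $\ell^\mp/\ell^\pm$ (hence again a Cauchy-type kernel after a smooth factor), rather than generically ``smooth times $1/\ell^\pm$''.
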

\begin{Remark}
    With more work, one can show that $\partial_\omega^k S_\omega: H^{s + k}_{\comp}(\partial \Omega; T^* \partial \Omega) \to \bar H^{s + 1}_{\loc}(\Omega)$, but $C^1$ regularity of the spectral measure is more than enough for the evolution problem. 
\end{Remark}
\begin{proof}
    1. Let $v \in H^s_{\mathrm{comp}}(\partial \Omega; T^* \partial \Omega)$. 
    We check the mapping properties of the operators $L^\pm_\omega S_\omega v$, which is given by 
    \begin{equation}\label{eq:LS}
        L^\pm_\omega S_\omega v(x) = \kappa_\omega \int_{\partial \Omega} \frac{v(y)}{\ell^\pm(x - y, \omega)}.
    \end{equation}
    Let us focus on the $L^+_\omega$ case with $v$ supported on $\partial \Omega_{\downarrow}$. Parameterizing $v(y)$ using the $\theta' = y_1$ coordinate, we can write $v = v(\theta') \md\theta'$ for some $v \in H^s_{\comp}(\R)$. For the domain $\Omega$, we introduce smooth coordinates $(\theta, \tau)$ so that
    \[x_1 = \theta - \frac{\tau(\pi - G(\theta))}{c_\lambda}, \quad x_2 = - (1 - \tau)(\pi - G(\theta)).\]
    Note that $\overline \Omega = \R_\theta \times [0, 1]_\tau$ with $\partial \Omega_{\downarrow} = \{\tau = 0\}$.

    \noindent 
    2. In these coordinates, we can compute
    \begin{align*}
        &L^+_\omega S_\omega v(\theta, \tau) \\
        &= \kappa_\omega \int_\R \Big(\frac{\theta}{\omega} - \frac{\tau(\pi - G(\theta))}{c_\lambda \omega} - \frac{\theta'}{\omega} - \frac{(1 - \tau)(\pi - G(\theta))}{\sqrt{1 - \omega^2}} + \frac{\pi - G(\theta')}{\sqrt{1 - \omega^2}} \Big)^{-1} v(\theta')\, \md \theta' \\
        &= \kappa_\omega \int_\R \Big( \frac{\theta - \theta'}{\omega} + \frac{G(\theta) - G(\theta')}{\sqrt{1 - \omega^2}} - i \epsilon \tau z_\omega(\theta) \Big)^{-1} v(\theta')\, \md \theta'  
    \end{align*}
    where 
    \[z_\omega(\theta) = \frac{1}{i\epsilon} \Big(\frac{1}{\sqrt{1 - \omega^2}} - \frac{1}{c_\lambda \omega} \Big) (-\pi + G(\theta)) = \Big(\frac{1}{\lambda(1 - \lambda^2)^{3/2}} + \mathcal O(\epsilon)\Big) (-\pi + G(\theta)).\]
    Most importantly $z_\omega$ is smooth and bounded away from $0$ uniformly in $\omega$. 

    There exists $F \in C^\infty(\R^2)$ such that \begin{equation}\label{Fdef}G(\theta) - G(\theta') = (\theta - \theta') F(\theta, \theta').\end{equation} So we can write
    \[\begin{gathered}
    L^+_\omega S_\omega v(\theta, \tau) = \kappa_\omega \int_\R \big( \psi_\omega(\theta, \theta') (\theta - \theta') - i \epsilon \tau z_\omega(\theta) \big)^{-1} v(\theta') \, \md \theta', \\ \text{with } \psi_\omega(\theta, \theta') = \frac{1}{\omega} + \frac{F(\theta, \theta')}{\sqrt{1 - \omega^2}}.
    \end{gathered}\]
    By the subcriticality of $\lambda$, we see that for $\ep$ sufficiently small, 
    \[\Re \psi_\omega(\theta, \theta') \ge c > 0, \quad \Im \psi_\omega (\theta, \theta') = \mathcal O(\epsilon) \]
    uniformly in $\omega$. Therefore, $L^+_\omega S_\omega$ is a family of $0$-th order pseudodifferential operators in $\theta$ that depend smoothly on the parameters $\epsilon$ and $\tau$ uniformly in $\omega$; it is, consequently, bounded $H^s_{\comp} (\partial \Omega; T^* \partial \Omega) \to \bar H^{s}_{\loc}(\Omega)$. A similar conclusion follows from considering $L^-_\omega S_\omega$ and $v$ supported on $\partial \Omega_\uparrow$, from which the desired mapping property follows since $\pa_{x_j}$ are linear combinations of $L_\omega^\pm$ with uniformly bounded coefficients.

    \noindent 
    3. Next we examine the derivative in the spectral parameter. Observe that for $\Im \omega >0$,
    \[\partial_\omega \ell^\pm(x, \omega) = \frac{2\omega^2 - 1}{2 \omega(1 - \omega^2)} \ell^\pm(x, \omega) + \frac{1}{2 \omega(1 - \omega^2)} \ell^\mp(x, \omega).\]
    Then we can write
    \begin{equation}\label{eq:dE}
        \partial_\omega E_\omega = \frac{\partial_\omega \kappa_\omega}{\kappa_\omega} E_\omega + \kappa_\omega a_0 + \kappa_\omega a_+ \frac{\ell^+(x, \omega)}{\ell^-(x, \omega)} + \kappa_\omega a_- \frac{\ell^-(x, \omega)}{\ell^+(x, \omega)},
    \end{equation}
    where $a_0, a_\pm$ are uniformly bounded in $\omega \in \mathcal J + i(0, \infty)$. Therefore, 
    \[\partial_\omega S_\omega v(x) = \frac{\partial_\omega \kappa_\omega}{\kappa_\omega} S_\omega v(x) + \kappa_\omega a_0 \int_{\partial \Omega} v(y) + \kappa_\omega \sum_{\pm} a_\pm \int_{\partial \Omega} \frac{\ell^\pm(x - y, \omega) v(y)}{\ell^\mp(x - y, \omega)}.\]
    Observe that the last term takes the same form as~\eqref{eq:LS} modified by a smooth term in the numerator. Therefore, it has the same mapping property as $L_\omega^\pm S_\omega$, which gives $\partial_\omega S_\omega: H^{s}_{\comp}(\partial \Omega; T^* \partial \Omega) \to \bar H_{\loc}^{s}(\Omega)$. 
\end{proof}

It then follows that we have a well-defined map $\mathcal C_\omega : \CIc (\partial \Omega; T^* \partial \Omega) \to C^\infty (\partial \Omega)$ by restricting to the boundary:
\begin{equation*}
    \mathcal C_\omega v \coloneqq (S_\omega v)|_{\partial \Omega}.
\end{equation*}
It will be technically convenient to compose $\mathcal C_\omega$ with the differential to form $\md \mathcal C_\omega$. We remark that Lemma~\ref{lem:S_mapping} implies the mapping property $\md \mathcal C_\omega: H^s_{\comp}(\partial \Omega ; T^* \partial \Omega) \to H^{s - 1}_{\loc} (\partial \Omega; T^* \partial \Omega)$. However, we will see that this mapping property can be improved to $H^s_{\comp}(\partial \Omega ; T^* \partial \Omega) \to H^{s}_{\loc} (\partial \Omega; T^* \partial \Omega)$ uniformly in $\omega$. 

Now restricting~\eqref{eq:single_layer} to the boundary and taking the differential, we then find that
\begin{equation}\label{eq:bdr}
    r_\omega + g_\omega = \md \mathcal C_\omega(\chi_\mb v_\omega) 
\end{equation}
where
\[r_\omega \coloneqq \md\big((E_\omega * (\indic_\Omega [P(\omega), \chi_b] u_\omega))\big|_{\partial \Omega}\big), \quad g_\omega \coloneqq \md\big((E_\omega * f)\big|_{\partial \Omega}\big).\]

\begin{lemma}\label{lem:rw_est}
    Let $\chi \in \CIc(\partial \Omega)$ be such that $\supp \chi \subset \{|\theta| \le L\}$, and let $\widetilde \chi_\mb \in \CIc(\R; \R)$ be such that $\widetilde \chi_\mb = 1$ on $\supp \chi_\mb$. Then for all $s \in \R$, 
    \[\|\chi r_\omega \|_{H^s(\partial \Omega)} \le C\|\widetilde \chi_\mb u_\omega\|_{L^2(\Omega)}, \quad \|\chi \partial_\omega r_\omega\|_{H^s(\partial \Omega)} \le C\big(\|\widetilde \chi_\mb u_\omega\|_{L^2(\Omega)} +\|\widetilde \chi_\mb \partial_\omega u_\omega\|_{L^2(\Omega)}\big)\]
    uniformly in $\omega \in \mathcal J + i (0, \infty)$. 
\end{lemma}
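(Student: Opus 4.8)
The plan is to show that $\chi r_\omega$ is, up to a harmless compactly supported smoothing operator, just $\widetilde\chi_\mb u_\omega$ paired against $E_\omega$ at points lying far from the singular support of $E_\omega$, so that the two estimates reduce to a uniform kernel bound. The first step is to exploit the structure of the commutator: since $\chi_\mb$ depends on $x_1$ only,
\[
  [P(\omega),\chi_\mb] \;=\; -\omega^2\big(\chi_\mb'' + 2\chi_\mb'\,\partial_{x_1}\big),
\]
a first-order operator whose coefficients are supported in $\supp\chi_\mb'\subset\{|x_1|>2L\}$, a region where $\partial\Omega$ is flat and horizontal (as $\supp G\subset\{|x_1|<M\}$ and $L>4M$, cf.~\eqref{eq:bb1}--\eqref{eq:bb3}). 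Choosing $\widetilde\chi_\mb\equiv1$ near $\supp\chi_\mb'$ we may replace $u_\omega$ by $\widetilde\chi_\mb u_\omega$ throughout the definition of $r_\omega$; the resulting source $\indic_\Omega[P(\omega),\chi_\mb](\widetilde\chi_\mb u_\omega)$ is a compactly supported $L^2$ function supported in $\{|x_1|>2L\}$.

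The second step is the geometric separation. The set $\supp\chi\subset\partial\Omega\cap\{|\theta|\le L\}$ is contained in $\{|x_1|\le L+(\pi-\min G)/c_\lambda\}$, and the black-box scales force $(\pi-\min G)/c_\lambda\ll L$ by~\eqref{eq:bb1}, so $\supp\chi$ and $\supp\chi_\mb'\cap\overline\Omega$ are separated by a distance $\gtrsim L$ in the $x_1$ variable, while $|x_2-y_2|\le\pi$ stays bounded. Since $L\gg \pi/c_\lambda$, this yields
\[
  \big|\ell^\pm(x-y,\omega)\big| \;\ge\; \frac{|x_1-y_1|}{|\omega|}-\frac{|x_2-y_2|}{|\sqrt{1-\omega^2}|} \;\ge\; c\,L
\]
for all $x\in\supp\chi$, $y\in\supp\chi_\mb'\cap\overline\Omega$, with $c=c(\mathcal J)>0$ uniform in $\omega=\lambda+i\epsilon$, $\lambda\in\mathcal J$, $\epsilon\in(0,1]$; moreover $x-y$ stays in a fixed bounded set. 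The third step reads off from the explicit formula~\eqref{eq:FS} (together with~\eqref{eq:dE} for the derivative) that on this region $\md_z E_\omega(z)$ and $\md_z\partial_\omega E_\omega(z)$ are smooth in $z$, holomorphic in $\omega$, with all $z$-derivatives bounded uniformly in $\omega$.

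The fourth step assembles these. Differentiating the convolution defining $r_\omega$ in the boundary variable and integrating by parts once in $y_1$ to move $\partial_{x_1}$ off of $u_\omega$ and onto the (now smooth) kernel — the boundary terms vanishing because $u_\omega|_{\partial\Omega}=0$, equivalently because $\partial_{x_1}$ is tangential along the flat portion of $\partial\Omega$ where $\chi_\mb'\ne0$ — gives
\[
  \chi\,r_\omega(\theta) \;=\; \int_\Omega K_\omega(\theta,y)\,\widetilde\chi_\mb(y)\,u_\omega(y)\,\md y,
\]
where $K_\omega$ is built from $\chi(\theta)$, $\md_\theta\!\big[(\md_z E_\omega)(x(\theta)-y)\big]$, $\chi_\mb'(y_1)$, $\chi_\mb''(y_1)$ and one $y_1$-derivative, with $\theta,y$ confined to fixed compact sets. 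By the bounds of the previous step, $K_\omega$ is a smooth compactly supported kernel whose seminorms are bounded uniformly in $\omega\in\mathcal J+i(0,1]$, so it defines operators $L^2(\Omega)\to H^s(\partial\Omega)$ of uniformly bounded norm for every $s$; this is the first estimate. For the second, $\partial_\omega r_\omega$ is a sum of three terms according to whether $\partial_\omega$ falls on $E_\omega$ (replace $\md_z E_\omega$ by $\md_z\partial_\omega E_\omega$ in $K_\omega$), on $-\omega^2(\chi_\mb''+2\chi_\mb'\partial_{x_1})$ (whose $\omega$-derivative is of the same first-order form), or on $u_\omega$ (replacing $\widetilde\chi_\mb u_\omega$ by $\widetilde\chi_\mb\partial_\omega u_\omega$); each term is handled as above, giving $\|\chi\partial_\omega r_\omega\|_{H^s}\le C(\|\widetilde\chi_\mb u_\omega\|_{L^2}+\|\widetilde\chi_\mb\partial_\omega u_\omega\|_{L^2})$. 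The remaining regime $\Im\omega\ge1$ is absorbed by rescaling: $\omega^{-2}P(\omega)=c_\omega^2\partial_{x_2}^2-\partial_{x_1}^2$ is then uniformly elliptic (since $c_\omega^2\to-1$), $\omega^2E_\omega$ is a corresponding fundamental solution with singular support shrinking to $\{0\}$ and uniform bounds away from it, and the $\omega^2$ from the rescaled commutator coefficients cancels the $\omega^{-2}$ in $\kappa_\omega$, so $K_\omega$ stays uniformly bounded.

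The main obstacle---really the only point carrying content---is the geometric estimate of the second step: verifying that the black-box scales~\eqref{eq:bb1}--\eqref{eq:bb3} genuinely force the source of $r_\omega$ to lie off every characteristic line $\ell^\pm(\cdot,\omega)=0$ through $\supp\chi$, uniformly in $\omega$, so that $E_\omega$ and $\partial_\omega E_\omega$ are sampled only where they are smooth with $\omega$-uniform bounds. Once that is in hand, $\chi r_\omega$ is literally a smoothing operator applied to $\widetilde\chi_\mb u_\omega$ and both estimates are immediate.
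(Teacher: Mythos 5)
Your approach is essentially the same as the paper's: exploit the separation of $\supp\chi$ (in $\{|x_1|\le L\}$) from $\supp\chi_\mb'$ (in $\{|x_1|>2L\}$), observe that the difference $x-y$ then avoids the characteristic lines $\{\ell^\pm(\cdot,\lambda)=0\}$ by the scale constraint \eqref{eq:bb1}, and use the explicit formulas \eqref{eq:FS}, \eqref{eq:dE} to conclude that $E_\omega$ and $\partial_\omega E_\omega$ are uniformly smooth there, so that $r_\omega$ is a uniformly smoothing operator applied to $\widetilde\chi_\mb u_\omega$. The paper phrases the kernel-smoothness step by introducing the set $U=\{|x_1|>M,\ |x_2|<\pi-\min G\}$ and replacing $E_\omega$ by $\indic_U E_\omega$ for the relevant convolution, rather than your integration by parts in $y_1$; the latter is fine (there is no boundary term because the $y_1$-integration has no finite endpoints and $\chi_\mb'$ is compactly supported — the appeal to $u_\omega|_{\partial\Omega}=0$ is a red herring, since $\partial_{y_1}$ produces no $\partial\Omega$-boundary term here), but it is not needed once one knows the convolution kernel is smooth with uniform bounds, since $[P(\omega),\chi_\mb]u_\omega$ is already controlled in $H^{-1}_{\mathrm{comp}}$ by $\|\widetilde\chi_\mb u_\omega\|_{L^2}$. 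Your separate treatment of $\Im\omega\ge 1$ is extra and in the spirit of the statement's ``uniformly in $\omega\in\mathcal J+i(0,\infty)$,'' but is not something the paper bothers with, as only the limit $\epsilon\to 0^+$ matters in the sequel.
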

\begin{figure}
    \centering
    \includegraphics[width=.9\textwidth]{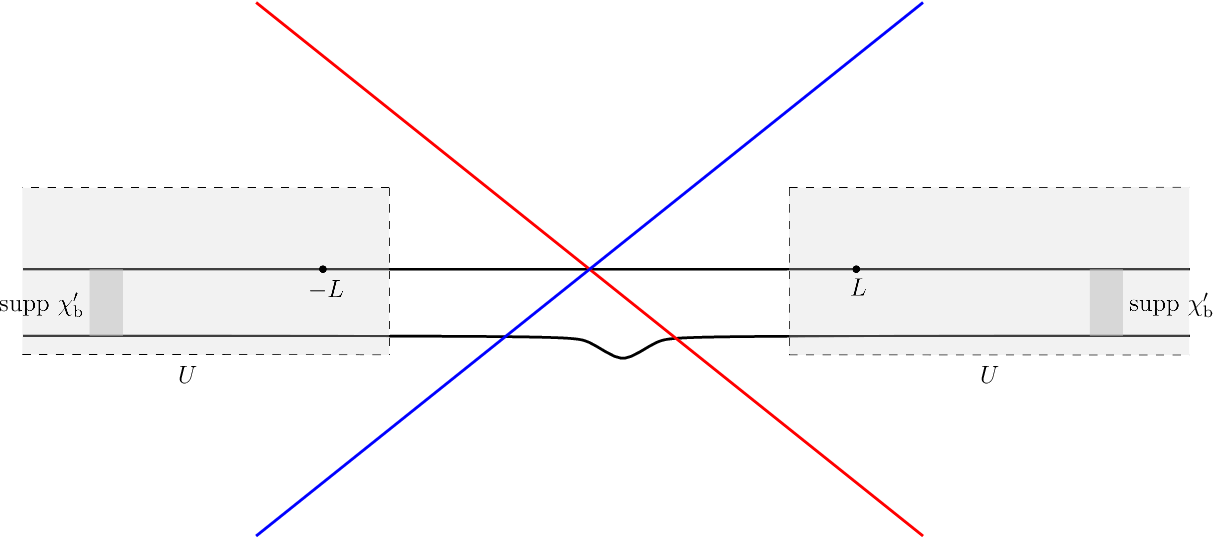}
    \caption{Diagram of the fundamental solution $E_\omega$ convolved with something supported on $\supp \chi'_\mb$. The red and blue lines are the singularities of $E_\omega$ given by $\{\ell^\pm(x, \lambda) = 0\}$ respectively, and the light gray regions are the region $U$ in the fundamental solution. The two dark gray patches near the ends denote $\supp \chi_\mb'$. It is clear from this picture that $\supp \chi_\mb'$ is contained in $U$ whenever the fundamental solution is centered in the region $\Omega_L$.}
    \label{fig:E_singularity}
\end{figure}
\begin{proof}
    The lemma follows by looking at the singularities of $E_\omega$ and $\partial_\omega E_\omega$. In particular, note that $E_\omega$ and $\partial_\omega E_\omega$ are uniformly smooth away from the sets $\{\ell^\pm(x, \lambda) = 0\}$. Consider the set 
    \[U = \{x \in \R^2: |x_1| > M,\, |x_2| < \pi - \min_{x_1 \in \R} G(x_1)\}.\]
    Recall that we chose $M$ to satisfy~\eqref{eq:bb1}, so $U$ is disjoint from $\{\ell^\pm(x, \lambda) = 0\}$. (See Figure~\ref{fig:E_singularity} for a diagram of this region with respect to singularities of $E_\omega$.) It is easy to see that $E_{\omega}|_{\overline U}, \partial_\omega E_{\omega}|_{\overline U} \in C^\infty(\overline U)$ uniformly in $\omega$. Furthermore, 
    \[\supp (\indic_\Omega [P_\omega, \chi_\mb] u_\omega) \subseteq \supp \chi'_\mb.\]
    Denote region $\Omega_L\coloneqq \{x \in \Omega: |x_1| \le L\}$ where $L$ is as in \eqref{eq:bb2}. Suppose $x\in \Omega_L$, $y\in \mathrm{supp} \ \chi'_\mb$, then $|x_1|<L$, $|y_1|>2L$, and $-\pi+\min G< x_2, y_2<0$, hence
    \[ |x_1-y_1|>L, \ |x_2-y_2|<\pi - \min G. \]
    In particular, this implies that $x-y\in U$. Hence we have 
    \[\left(E_\omega * (\indic_\Omega [P(\omega), \chi_\mb] u_\omega) \right)|_{\Omega_L} = \left( (\indic_U E_\omega) * (\indic_\Omega [P(\omega), \chi_\mb] u_\omega)\right)|_{\Omega_L}, \]
    which means $E_\omega * (\indic_\Omega [P(\omega), \chi_\mb] u_\omega) \in \overline C^\infty(\Omega_{L})$ with estimates
    \[\|E_\omega * (\indic_\Omega [P(\omega), \chi_\mb] u_\omega)\|_{H^s(\Omega_{L})} \lesssim \|\widetilde \chi_\mb u_\omega\|_{L^2}.\]
    Restricting to the boundary, taking the differential, and cutting off by $\chi$, we find that
    \[\|\chi r_\omega\|_{H^s} = \|\chi \md\big(((\indic_U E_\omega) * (\indic_\Omega [P(\omega), \chi_\mb] u_\omega))\big|_{\partial \Omega}\big)\|_{H^s} \le \|\widetilde \chi_\mb u_\omega\|_{L^2}\]
    as desired.

    For the differentiated estimate, note that 
    \begin{multline*}
        \partial_\omega r_\omega = (\partial_\omega E_\omega) * (\indic_\Omega [P(\omega), \chi_\mb] u_\omega) \\
        + E_\omega * (\indic_\Omega [\partial_\omega P(\omega), \chi_\mb] u_\omega) + E_\omega * (\indic_\Omega [P(\omega), \chi_\mb] (\partial_\omega u_\omega)).
    \end{multline*}
    Then a similar analysis yields the desired differentiated estimate. 
\end{proof}

Finally, we note that $g_\omega$ is locally smooth, uniformly in $\omega$. In particular, we have the following mapping property. 
\begin{lemma}\label{lem:gw_bound}
    Let $\chi \in \CIc(\partial \Omega)$. Then for any $f \in \CIc(\Omega)$ and $s \ge 0$, 
    \[\|\chi g_\omega\|_{H^{s}(\partial \Omega)} \le C \|f\|_{H^{s + 1}(\Omega)}, \qquad \|\chi \partial_\omega g_\omega\|_{H^{s}(\partial \Omega)} \le C\|f\|_{H^{s + 2}(\Omega)},\]
    where $C$ depends only on $\supp f$, and in particular, the estimate is uniform in $\omega \in \mathcal J + i(0, \infty)$.
\end{lemma}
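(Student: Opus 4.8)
The plan is to exploit the same geometric fact used in the proof of Lemma~\ref{lem:rw_est}: the singularities of $E_\omega$ and $\partial_\omega E_\omega$ lie on the characteristic lines $\{\ell^\pm(x,\lambda)=0\}$, and by the choice of $M$ in~\eqref{eq:bb1} these are bounded away from the region where both $\supp f$ is located and where we wish to evaluate the boundary restriction. First I would fix $\chi \in \CIc(\partial\Omega)$ and note that since $\supp f \subset \{|x_1| < M\}$ is fixed, the relevant differences $x - y$ with $x \in \supp\chi \subset \partial\Omega$ and $y \in \supp f$ range over a fixed compact set. I would split into two cases. If $\supp\chi \subset \{|x_1| \le L\}$, then I invoke the region $U$ from the proof of Lemma~\ref{lem:rw_est} (or rather an enlargement of it to include all of $\{|x_1|\le L\}\times\{-\pi+\min G < x_2 < 0\}$ minus a neighborhood of the characteristic lines through $\supp f$); the key observation is that $E_\omega*f$ is, near $\partial\Omega \cap \{|x_1| \le L\}$, a convolution of a compactly supported smooth function with a kernel that is smooth on the relevant range of $x - y$, uniformly in $\omega \in \mathcal J + i(0,\infty)$, because $E_\omega$ and $\partial_\omega E_\omega$ are uniformly $C^\infty$ away from their singular support.

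For the general $\chi$, i.e.\ when $\supp\chi$ may extend into the ends $|x_1| > L$, I would combine the above with the fact that in the ends the singular lines $\{\ell^\pm(x,\lambda) = 0\}$ through $\supp f$ have already exited the channel (or at least are a bounded distance from $\partial\Omega$); more carefully, for $x \in \partial\Omega$ with $|x_1|$ large and $y \in \supp f$, the quantity $\ell^+(x-y,\omega)\ell^-(x-y,\omega)$ has modulus growing like $|x_1|$ and stays in the domain of analyticity of the logarithm, so $E_\omega * f$ and $\partial_\omega E_\omega * f$ remain smooth there with all seminorms controlled. Thus on any compact piece of $\partial\Omega$, $(E_\omega*f)|_{\partial\Omega} \in C^\infty$ with seminorms bounded by seminorms of $f$; taking the differential costs one derivative, which accounts for the $s+1$ loss. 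Quantitatively, writing $E_\omega * f(x) = \int E_\omega(x-y) f(y)\,\md y$ and using that $\|\partial^\alpha_x E_\omega(x-y)\|$ is bounded uniformly for $(x,y)$ in the relevant compact set (of size depending only on $\supp f$ and the geometry), one gets $\|\chi (E_\omega * f)\|_{\bar H^{s+1}(\partial\Omega)} \lesssim \|f\|_{H^{s+1}(\Omega)}$ by a standard mapping estimate for convolution against a fixed smooth kernel; applying $\md$ and restricting gives the first bound. The second bound follows identically after replacing $E_\omega$ by $\partial_\omega E_\omega$ and using the explicit expression~\eqref{eq:dE}, noting that $\partial_\omega E_\omega$ picks up the extra ratios $\ell^\pm/\ell^\mp$ which are smooth and uniformly bounded on the region $U$ (and in the ends), so no new singularities arise; the extra derivative loss to $s+2$ comes from the logarithmic-type factor $a_\pm$ in~\eqref{eq:dE} being only one order worse, or more simply from tracking that $\partial_\omega$ applied to $E_\omega * f$ is no worse behaved than $E_\omega * (\text{something involving one more derivative of }f)$.

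The main obstacle, such as it is, is bookkeeping rather than substance: one must verify that the branch cut of the logarithm in~\eqref{eq:FS} is never crossed as $x$ ranges over $\supp\chi$ and $y$ over $\supp f$, and that the estimates are genuinely uniform as $\epsilon = \Im\omega \downarrow 0$ — but this is exactly the content of the region $U$ being disjoint from $\{\ell^\pm(x,\lambda)=0\}$, which was already established via~\eqref{eq:bb1}, together with the observation that $\kappa_\omega$ and the coefficients $a_0, a_\pm$ in~\eqref{eq:dE} are uniformly bounded on $\mathcal J + i(0,\infty)$. So the argument is essentially a corollary of the machinery set up for Lemma~\ref{lem:rw_est}, specialized to the smooth inhomogeneity $f$ in place of the commutator term $\indic_\Omega[P(\omega),\chi_\mb]u_\omega$.
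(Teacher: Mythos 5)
Your proposal is not correct: it replaces the paper's argument by one whose central geometric claim is false for this term, precisely because $g_\omega$ behaves very differently from $r_\omega$.

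The paper's proof of Lemma~\ref{lem:gw_bound} is short but uses a genuinely different mechanism than Lemma~\ref{lem:rw_est}: it factors through the first-order operators $L^\pm_\omega$ from~\eqref{Lpm}, notes that $L^\pm_\omega(E_\omega*f) = \kappa_\omega (1/\ell^\pm(\cdot,\omega)) * f$, and then uses the explicit Fourier multiplier $\mathcal F(1/\ell^\pm(\cdot,\omega)) = (\xi_1/\sqrt{1-\omega^2} - \xi_2/\omega)^{-1}$ to obtain a uniform $H^s$-bound on $\psi L^\pm_\omega(E_\omega*f)$, and finally restricts to the boundary and applies $\md$. Nowhere does this argument assert smoothness of $E_\omega * f$ near $\partial\Omega$, because such smoothness is not uniformly available.

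The gap in your proposal is the claim that ``$E_\omega*f$ is, near $\partial\Omega\cap\{|x_1|\le L\}$, a convolution of a compactly supported smooth function with a kernel that is smooth on the relevant range of $x-y$.'' This is false. Take $x\in\partial\Omega_\uparrow$ with $x_2=0$ and $y\in\supp f$ with $y_2\in(-\pi,0)$, $|y_1|<M$. Then $\ell^+(x-y,\lambda)=0$ exactly when $x_1 = y_1 + y_2/c_\lambda$, which lies in $(-M-\pi/c_\lambda,\,M)\subset\{|x_1|\le L\}$; the analogous computation holds for $\ell^-$ and for $\partial\Omega_\downarrow$. So the characteristics of $E_\omega$ emanating from $\supp f$ strike both components of $\partial\Omega$ well inside the interior region, and the set of differences $\supp\chi - \supp f$ is not disjoint from the singular support $\{\ell^+\ell^-=0\}$ of $E_\omega$ in the limit $\epsilon\to 0$. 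The mechanism that drives Lemma~\ref{lem:rw_est} --- the region $U$ where the kernel is uniformly smooth --- relies crucially on the fact that the source $\indic_\Omega[P(\omega),\chi_\mb]u_\omega$ is supported on $\supp\chi'_\mb\subset\{|x_1|>2L\}$, far to the sides of the evaluation region $\{|x_1|\le L\}$, with $M>10(\pi-\min G)/c_\lambda$ ensuring the horizontal separation exceeds the width of the channel divided by the characteristic slope. None of this holds when the source is $f$, which sits in the middle. You partially notice this (``minus a neighborhood of the characteristic lines through $\supp f$''), but that excision removes precisely the neighborhood of $\partial\Omega$ where the estimate must be proved, and you give no argument there. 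Without an argument on the characteristics, the proof does not go through; moreover, the asserted uniform $C^\infty$ bound cannot be correct, since $\hat E_\omega$ develops a $1/\epsilon$-type singularity on the characteristic cone and the sup-norm of $E_\omega * f$ is therefore expected to degrade logarithmically. The uniform Sobolev estimate in the lemma is obtained only after applying $L^\pm_\omega$ (equivalently $\md$ on $\partial\Omega$), which is exactly what the paper's Fourier multiplier argument exploits. You should replace the disjoint-singular-support step by the factorization through $L^\pm_\omega$ and the explicit Fourier multiplier bound.
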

\begin{proof}
    As in the proof of Lemma~\ref{lem:S_mapping}, we first observe that 
    \[L^\pm_\omega (E_\omega * f) = \kappa_\omega \int_{\R^2} \frac{f(y)}{\ell^\pm(x - y, \omega)}\, \md y.\]
    Using the fact that $\mathcal F(1/z) = -i/(\xi_1 + i \xi_2)$, $z = x_1 + ix_2$, we find that
    \begin{equation*}
        \mathcal F \Big(\frac{1}{\ell^\pm(\bullet, \omega)}\Big) = \Big(\frac{1}{\sqrt{1 -\omega^2}} \xi_1 - \frac{1}{\omega} \xi_2 \Big)^{-1}.
    \end{equation*}
    Then it follows that for any $\psi \in \CIc(\R^2)$,
    \[\|\psi L_\omega^\pm(E_\omega * f)\|_{H^s(\R^2)} \lesssim C \|f\|_{H^s(\R^2)},\]
    where $C$ depends only on $\supp f$. Restricting to the boundary, we conclude that for $s \ge 1$, we have
    \[\|\chi g_\omega\|_{H^{s - 1}(\partial \Omega)} \lesssim C\|f\|_{H^s(\Omega)}.\]
    Next, we can take the derivative with respect to the spectral parameter. Then using the formula~\eqref{eq:dE}, the estimate for $\partial_\omega g_\omega$ follows. 
\end{proof}
\begin{Remark}
    Note that we did not use the fact that our domain is subcritical in the proof above. The estimates can be improved by one order if subcriticality is taken into account. For the sake of concision, we simply assume $f \in \CIc(\Omega)$ and forgo the optimal estimates. We also remark that analogous estimates for higher derivatives in the spectral parameter can be obtained with more work. 
\end{Remark}

\subsection{Kernel computation}\label{sec:kernel}
We now compute the Schwartz kernel of the restricted single layer operator $\md \mathcal C_\omega$ explicitly.
The operator acts on one-forms on $\partial \Omega = \partial \Omega_\uparrow \sqcup \partial \Omega_\downarrow \simeq \R_{\uparrow} \sqcup \R_{\downarrow}$, which we parameterize using the $x_1$ coordinate by 
\begin{equation}\begin{split}\label{param}
& \partial \Omega_\uparrow \coloneqq \{\mathbf x(\theta) \coloneqq (\theta, 0) : \theta\ \in \R_{\uparrow}\}, \\ 
& \partial \Omega_\downarrow \coloneqq  \{\mathbf x(\theta) \coloneqq (\theta, G(\theta) - \pi) : \theta \in \R_{\downarrow}\}.
\end{split}\end{equation}
Note that this parametrization does not agree with a consistent orientation of $\pa\Omega$. Neither it is the parametrization that we used in the proof of Lemma \ref{lem:S_mapping}.
Since we have the explicit formula~\eqref{eq:FS} for the fundamental solution, we see that the Schwartz kernel of $\md \mathcal C_\omega$ can be split into two pieces, $K_\omega = K^+_\omega + K^-_\omega$, given by the formula
\begin{equation}\label{eq:dC_formula}
    K_\omega^\pm (\theta, \theta') = \kappa_\omega \lim_{\delta \to 0 +} \frac{\partial_\theta \ell^\pm (\mathbf x(\theta), \omega)}{\ell^\pm( \mathbf x(\theta) - \mathbf x(\theta') + \delta \mathbf v(\theta),\omega)}
\end{equation}
where $\mathbf v(\theta)$ is an inward pointing vector field. We can simply take 
\[\mathbf v(\theta) = \begin{cases} (0, -1), & \theta \in \partial \Omega_\uparrow, \\ (0, 1), & \theta \in \partial \Omega_\downarrow. \end{cases}\]
Note that each of $K^\pm_\omega$ further consists of four components to account for the choice of $\theta$ and $\theta'$ in $\partial \Omega_\uparrow$ or $\partial \Omega_\downarrow$.  We will analyze these components separately below.

Recall that $\ell^\pm(x, \lambda) = \ell^\pm(\gamma^\pm(x), \lambda)$. We wish to compute the adjustment needed when $\lambda$ is replaced by $\omega = \lambda + i \epsilon$. We have the following
\begin{lemma}\label{lem:invol_errors}
Assume that $\Omega$ is $\lambda$-subcritical. Then for $\omega = \lambda + i \epsilon$, $\epsilon > 0$, there exist $z_\omega^{\pm, \uparrow}(\theta), z_\omega^{\pm, \downarrow}(\theta) \in C^\infty(\R)$ such that 
\begin{gather*}
    \ell^\pm(\mathbf x(\theta), \omega) - \ell^\pm(\gamma^\pm(\mathbf x(\theta)), \omega) = i \epsilon z_\omega^{\pm, \uparrow}(\theta) \quad \text{for} \quad \theta \in \R_\uparrow, \\
    \ell^\pm(\mathbf x(\theta), \omega) - \ell^\pm(\gamma^\pm(\mathbf x(\theta)), \omega) = -i \epsilon z_\omega^{\pm, \downarrow}(\theta) \quad \text{for} \quad \theta \in \R_\downarrow,
\end{gather*}
and $z_\omega^{\pm, \bullet}(\theta)$, $\bullet = \uparrow, \downarrow$, satisfies
\begin{equation}\label{eq:z1}
    C \ge \Re z^{\pm, \bullet}_\omega \ge C^{-1} > 0, \quad \Im z^{\pm, \bullet}_\omega = \mathcal O(\epsilon),
\end{equation}
and there exists $M' > 0$ such that 
\begin{equation}\label{eq:z2}
    \partial_\theta z_\omega^{\pm, \bullet}(\theta) = 0 \quad \text{when} \quad |\theta| > M'.
\end{equation}
\end{lemma}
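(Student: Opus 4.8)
The plan is to exploit that the involution $\gamma^\pm=\gamma^\pm_\lambda$ depends only on the \emph{real} parameter $\lambda$, not on $\epsilon$: with $\omega=\lambda+i\epsilon$ the left-hand side vanishes identically at $\epsilon=0$, so a factor of $i\epsilon$ can be pulled out cleanly. Fix $\lambda$ in the (compact) subcritical interval and $\theta$, and set $\mathbf y=\gamma^\pm_\lambda(\mathbf x(\theta))\in\partial\Omega$, a point independent of $\epsilon$. Then
\[
\ell^\pm(\mathbf x(\theta),\omega)-\ell^\pm(\mathbf y,\omega)=\pm\frac{x_1(\theta)-y_1}{\omega}+\frac{x_2(\theta)-y_2}{\sqrt{1-\omega^2}},
\]
holomorphic in $\omega$ in a complex neighborhood of the subcritical range, with $\sqrt{1-\omega^2}$ the branch near $\sqrt{1-\lambda^2}>0$. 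By definition of $\gamma^\pm_\lambda$ this vanishes at $\omega=\lambda$, i.e. $\pm(x_1(\theta)-y_1)=-\lambda(x_2(\theta)-y_2)/\sqrt{1-\lambda^2}$; substituting this back collects all $\theta$-dependence into a single prefactor:
\[
\ell^\pm(\mathbf x(\theta),\omega)-\ell^\pm(\mathbf y,\omega)=\big(x_2(\theta)-y_2\big)\,g(\omega),\qquad g(\omega):=\frac{1}{\sqrt{1-\omega^2}}-\frac{\lambda}{\omega\sqrt{1-\lambda^2}}.
\]

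Next I would analyze the scalar $g$. It is holomorphic near $\lambda$ with $g(\lambda)=0$, so $g(\omega)=(\omega-\lambda)h(\omega)$ with $h$ holomorphic and $h(\lambda)=g'(\lambda)=\big(\lambda(1-\lambda^2)^{3/2}\big)^{-1}>0$; hence $g(\lambda+i\epsilon)=i\epsilon\,h(\lambda+i\epsilon)$, where for $\epsilon$ small $C^{-1}\le\Re h(\lambda+i\epsilon)\le|h(\lambda+i\epsilon)|\le C$ and $\Im h(\lambda+i\epsilon)=\mathcal O(\epsilon)$, uniformly in $\lambda$. For $\theta\in\R_{\uparrow}$ one has $x_2(\theta)=0$, $y_2=G(y_1)-\pi$, so I would set $z_\omega^{\pm,\uparrow}(\theta):=(\pi-G(y_1))\,h(\omega)$ with $y_1=(\gamma_\lambda^\pm(\mathbf x(\theta)))_1$; for $\theta\in\R_\downarrow$ one has $x_2(\theta)=G(\theta)-\pi$, $y_2=0$, so I would set $z_\omega^{\pm,\downarrow}(\theta):=(\pi-G(\theta))\,h(\omega)$. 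By construction these reproduce the two displayed identities with the stated signs, and \eqref{eq:z1} is immediate since $\pi-G\in[\pi-\max G,\pi-\min G]\subset(0,\infty)$. Smoothness of $z_\omega^{\pm,\bullet}$ in $\theta$ reduces to smoothness of $\theta\mapsto\gamma_\lambda^\pm(\mathbf x(\theta))$, which follows from $\lambda$-subcriticality (each characteristic meets each boundary component transversally at exactly one point, so the implicit function theorem applies).

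It remains to prove \eqref{eq:z2}. Outside $\{|x_1|<M\}\supset\supp G$ the two boundary pieces are the horizontal lines $\{x_2=0\}$ and $\{x_2=-\pi\}$, and the reflection of a characteristic of slope $\pm c_\lambda$ off a horizontal line is a rigid horizontal translation by $\pm\pi/c_\lambda$; thus, once both $\mathbf x(\theta)$ and $\gamma_\lambda^\pm(\mathbf x(\theta))$ lie over $\{|x_1|>M\}$ we have $G(\theta)=G(y_1)=0$ and $z_\omega^{\pm,\bullet}(\theta)$ equals a $\theta$-independent constant, so $\partial_\theta z_\omega^{\pm,\bullet}=0$. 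Since $\theta\mapsto(\gamma_\lambda^\pm(\mathbf x(\theta)))_1$ is proper, there is such an $M'$, uniform over the compact subcritical interval by compactness. The main obstacle is purely bookkeeping: keeping the signs consistent across the four combinations of $\pm$ and $\uparrow/\downarrow$, tracking uniformity in $\omega$ (the branch of $\sqrt{1-\omega^2}$ and the smooth dependence of $\gamma_\lambda^\pm$ on $\lambda$), and pinning down $M'$; the one structural point — that the left-hand side vanishes at $\epsilon=0$, so the factor $i\epsilon$ comes out cleanly — is elementary.
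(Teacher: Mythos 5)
Your proposal is correct, and it gives a cleaner argument than the one in the paper. The paper's proof introduces $\Lambda(x,\omega)\coloneqq\ell^\pm(x,\omega)-\ell^\pm(x,\lambda)$, Taylor-expands it to first order in $\epsilon$, and then checks case by case (four cases, for $\pm$ and $\uparrow/\downarrow$) that the two terms of $\Lambda(x,\omega)-\Lambda(\gamma^\pm(x),\omega)$ have the same sign, using the monotonicity of the $x_1$ and $x_2$ coordinates under $\gamma^\pm$. Your argument instead substitutes the relation $\pm(x_1-y_1)=-\lambda(x_2-y_2)/\sqrt{1-\lambda^2}$ (valid exactly, not just to first order) into $\ell^\pm(\mathbf x,\omega)-\ell^\pm(\mathbf y,\omega)$, which collapses it to the exact product
\[
\ell^\pm(\mathbf x(\theta),\omega)-\ell^\pm(\gamma^\pm(\mathbf x(\theta)),\omega)=(x_2(\theta)-y_2)\Big(\tfrac{1}{\sqrt{1-\omega^2}}-\tfrac{\lambda}{\omega\sqrt{1-\lambda^2}}\Big),
\]
so that the $\pm$ drops out, the sign of the whole expression is read off from the single factor $x_2-y_2$ (positive on $\partial\Omega_\uparrow$, negative on $\partial\Omega_\downarrow$), and the factor $i\epsilon$ is extracted once and for all from the scalar holomorphic function $g(\omega)$. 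This buys you an explicit closed formula $z_\omega^{\pm,\bullet}(\theta)=(\pi-G(\cdot))\,h(\omega)$ (with $h=g/(\omega-\lambda)$), from which \eqref{eq:z1}, \eqref{eq:z2}, and smoothness are immediate; the paper's sign-check is replaced by a single computation, and the fact that $g$ is real-analytic and real on the real axis near $\lambda$ gives $\Im h=\mathcal O(\epsilon)$ without a separate estimate. Your identification $h(\lambda)=g'(\lambda)=\big(\lambda(1-\lambda^2)^{3/2}\big)^{-1}$ is correct and agrees with the constant appearing in the paper's Taylor expansion, and your argument for \eqref{eq:z2} (using that $\gamma^\pm_\lambda$ is an affine translation outside $\supp G$, so $G(\theta)=G(y_1)=0$ for $|\theta|$ large) is the same as the paper's brief ``$G$ compactly supported'' remark, just made explicit.

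One small remark: for $z_\omega^{\pm,\downarrow}$ the smoothness is immediate from smoothness of $G$; only $z_\omega^{\pm,\uparrow}$ requires invoking smoothness of $\theta\mapsto\gamma_\lambda^\pm(\mathbf x(\theta))$, which as you say follows from transversality and the implicit function theorem under the subcriticality hypothesis. This distinction doesn't affect the proof but is worth keeping straight when writing it up.
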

\begin{proof}
    We compute
    \begin{align*}
        \Lambda(x, \omega) \coloneqq & \ell^\pm(x, \omega) - \ell^\pm(x, \lambda) = \mp \frac{i\epsilon }{(\lambda + i \epsilon) \lambda}x_1 + \frac{\sqrt{1 - \lambda^2} - \sqrt{1 - \omega^2} }{\sqrt{(1 - \lambda^2)(1 - \omega^2)}}x_2 \\
        =& \mp \frac{i \epsilon}{\lambda^2} x_1 +\frac{i\lambda\epsilon}{(1-\lambda^2)^{\frac32}} x_2 + \mathcal O(\epsilon^2).
    \end{align*}
    Note that $\ell^\pm(x, \omega) - \ell^\pm(\gamma^\pm(x), \omega) = \Lambda(x, \omega) - \Lambda(\gamma^\pm(x), \omega)$. When $x \in \partial \Omega_\uparrow$, we see that the $x_1$ coordinate increases and the $x_2$ coordinate decreases when $\gamma^+$ is applied, both by a bounded amount, so 
    \[C \ge \epsilon^{-1} \Im (\ell^+(x, \omega) - \ell^+(\gamma^+(x), \omega)) \ge C^{-1} > 0.\]
    Similarly going through the other cases, we obtain~\eqref{eq:z1}. Finally, \eqref{eq:z2} is an easy consequence of the fact that $G$ is compactly supported. 
\end{proof}

We now assume that $\omega \in (0, 1) + i(0, \infty)$, and analyze the various components of $K_\omega^\pm(\theta,\theta')$ one by one.
For brevity, we slightly overload notation in what follows by writing
$$
\gamma^\pm(\theta)  = x_1(\gamma^\pm(\mathbf x(\theta)))
$$
where $\mathbf x(\theta)$ is one of the parametrizations of a boundary component from \eqref{param}.

\noindent
{\bf Case 1:} $\theta, \theta' \in \R_\uparrow$. Note that for $\epsilon > 0$, $\Im \frac{\omega}{\sqrt{1-\omega^2}} > 0$. Then
\begin{equation*}
    K_\omega^\pm(\theta, \theta') = \kappa_\omega \lim_{\delta \to 0} \frac{\pm \omega^{-1}}{\pm \omega^{-1}(\theta - \theta') - \delta (1 - \omega^2)^{-1/2}} = \kappa_\omega (\theta - \theta' \mp i0)^{-1}.
\end{equation*}
Note that this appears to be the opposite sign as the one appearing in \cite{DyWaZw:21}, but this is accounted for by the convention that $\partial \Omega_\uparrow$ is parameterized in the clockwise direction with respect to the interior of $\Omega$.

\noindent
{\bf Case 2:} $\theta \in \R_\uparrow$, $\theta' \in \R_\downarrow$. Then 
\begin{align*}
    K^\pm_\omega(\theta, \theta') 
    &= \kappa_\omega \frac{\pm \omega^{-1}}{\ell^\pm(\mathbf x(\theta), \omega) - \ell^\pm(\mathbf x(\theta'), \omega)} \\
    &= \kappa_\omega \frac{\pm \omega^{-1}}{[\ell^\pm(\mathbf x(\theta), \omega) - \ell^\pm(\gamma^\pm( \mathbf x(\theta')), \omega)] + [\ell^\pm(\gamma^\pm( \mathbf x(\theta')), \omega) - \ell^\pm(\mathbf x(\theta'), \omega)]} \\
    & = \kappa_\omega \frac{ \pm \omega^{-1} }{ \pm \omega^{-1}(\theta-\gamma^{\pm}(\theta') ) + i\epsilon z_\omega^{\pm, \downarrow}(\theta') } \\
    &= \kappa_\omega \frac{1}{\theta - \gamma^{\pm}(\theta') \pm i \epsilon \omega z^{\pm, \downarrow}_{\omega}(\theta')}
\end{align*}
where $z^{\pm, \downarrow}_{\omega}$ is described in Lemma~\ref{lem:invol_errors}. Note that as $\epsilon \to 0$, the operator wavefront set of $K^+_\omega$ lies uniformly in the {positive} frequencies. In other words, $K^+_\omega$ maps {positive} frequencies on $\partial \Omega_\downarrow$ to {positive} frequencies on $\partial \Omega_\uparrow$ via the map $\gamma^+$, which means {positive} frequencies move to the {left}. On the other hand, we see that $K^-_\omega$ maps {negative} frequencies on $\partial \Omega_\downarrow$ to {negative} frequencies on $\partial \Omega_{\uparrow}$ via the map $\gamma^-$, which means {negative} frequencies move to the {right}. We note that this is consistent with our end analysis since we have high regularity estimates for positive frequencies in the right end of the domain and negative frequencies in the left end of the domain. This means that positive frequencies should propagate left out of the right end, and negative frequencies should propagate right out of the left end.

\noindent
{\bf Case 3:} $\theta \in \R_\downarrow$, $\theta' \in \R_\downarrow$. 
Then
\[K_\omega^\pm(\theta, \theta') = \kappa_\omega \lim_{\delta \to 0+} \frac{\pm \omega^{-1} + G'(\theta) (1 - \omega^2)^{-1/2}}{\pm(\theta - \theta') \omega^{-1} + (G(\theta) - G(\theta'))(1 - \omega^2)^{-1/2} + i \delta}.\]
Note that with $F$ defined as in \eqref{Fdef}, $F(\theta, \theta) = G'(\theta)$. Since $G$ is compactly supported, we also see that there exists $M' > 0$ such that 
\begin{equation}\label{eq:GF_cpct}
    G'(\theta) = 0 \quad \text{and} \quad F(\theta, \theta') = 0 \quad \text{when} \quad |\theta|, |\theta'| > M', \  |\theta - \theta'| \le \frac{1}{M'}.
\end{equation}
By the subcriticality condition, there exists $C > 0$ such that 
\begin{equation}\label{eq:GF_subcrit}
    \begin{gathered}
        C^{-1} \le \pm \Re (\pm \omega^{-1} + G'(\theta) (1 - \omega^2)^{-1/2}) \le C, \\
        C^{-1} \le \pm \Re (\pm \omega^{-1} + F(\theta, \theta') (1 - \omega^2)^{-1/2}) \le C
    \end{gathered}
\end{equation}
for all sufficiently small $\epsilon > 0$. Therefore, taking the $\delta \to 0$ limit yields
\begin{equation}\label{eq:psi_omega}
\begin{gathered}
    K_\omega^\pm (\theta, \theta') = \kappa_\omega \left( \psi_\omega(\theta, \theta') (\theta - \theta')  \pm i 0\right)^{-1}, \\
    \text{where} \quad \psi_\omega(\theta, \theta') \coloneqq \frac{\pm \omega^{-1} + F(\theta, \theta')(1 - \omega^2)^{-1/2}}{\pm \omega^{-1} + G'(\theta) (1 - \omega^2)^{-1/2}}.
\end{gathered}
\end{equation}
Observe that by~\eqref{eq:GF_cpct} and~\eqref{eq:GF_subcrit},  $\psi_\omega \in C^\infty(\R^2)$ and
\begin{equation}\label{eq:psi_prop}
\begin{split}
    & \psi_\omega(\theta, \theta) = 1 \text{ for all } \theta, 
    \\ 
    & \psi_\omega(\theta, \theta') = 1 \text{ when } |\theta|, |\theta'| > M', \  |\theta - \theta'| \le \frac{1}{M'}, \\
    & \Re\psi(\theta,\theta')\in [C^{-1},C] \text{ for all } (\theta,\theta'), \ \epsilon \text{ small, } \text{ and some } C>0, \\
    & |\partial_\theta^\alpha \partial_{\theta'}^\beta \psi_\omega(\theta, \theta')| \le C_{\alpha, \beta}, \text{ for all $\alpha, \beta \in \N_0$},
\end{split}
\end{equation}
where the estimate holds uniformly for all $\epsilon$ sufficiently small. We can write
\begin{equation*}
    K^\pm_\omega(\theta, \theta') = \kappa_\omega (\theta - \theta' \pm i 0)^{-1} + R_\omega^\pm, \quad R_\omega^\pm(\theta, \theta') \coloneqq \kappa_\omega \frac{1 - \psi_\omega(\theta, \theta')}{(\theta - \theta') \psi_\omega(\theta, \theta')}.
\end{equation*}
By~\eqref{eq:psi_prop}, we see that for all $\alpha, \beta\in \mathbb N_0$
\[|\partial_\theta^\alpha \partial_{\theta'}^\beta R_\omega^\pm(\theta, \theta')| \le C_{\alpha, \beta} \langle \theta - \theta' \rangle^{-1}\]
uniform in $\epsilon > 0$ sufficiently small. Therefore, $R_\omega^\pm \in \Psi_{\mathrm{res}}(\R)$.

\noindent
{\bf Case 4:} $\theta \in \R_\downarrow$, $\theta' \in \R_\uparrow$. Then 
\begin{align*}
K^\pm_\omega(\theta, \theta') &= \kappa_\omega \frac{\pm \omega^{-1} + G'(\theta) (1 - \omega^2)^{-1/2}}{\ell^+(\mathbf x(\theta), \omega) - \ell^+(\mathbf x(\theta'), \omega)} \\
&= \kappa_\omega \frac{\pm \omega^{-1} + G'(\theta)(1 - \omega^2)^{-1/2}}{[\ell^\pm(\mathbf x(\theta), \omega) - \ell^\pm(\gamma^\pm( \mathbf x(\theta')), \omega)] + [\ell^\pm(\gamma^\pm( \mathbf x(\theta')), \omega) - \ell^\pm(\mathbf x(\theta'), \omega)]} \\
&= \kappa_\omega \frac{\pm \omega^{-1} + G'(\theta) (1 - \omega^2)^{-1/2}}{ \pm (\theta - \gamma^\pm(\theta')) \omega^{-1} + (G(\theta) - G(\gamma^\pm(\theta')))(1 - \omega^2)^{-1/2} - i\epsilon z_\omega^{\pm, \uparrow}(\theta)}, 
\end{align*}
where $z_\omega^{\pm, \uparrow}$ is described in Lemma~\ref{lem:invol_errors}. Let $\psi_\omega$ be as defined in~\eqref{eq:psi_omega}. Then we see that 
\[K^\pm_\omega(\theta, \theta') = \kappa_\omega \big[\psi_\omega(\theta, \gamma^\pm(\theta')) \cdot (\theta - \gamma^\pm(\theta')) \mp i \epsilon \tilde z_\omega^{\pm, \uparrow}(\theta') \big]^{-1}\]
where
\[\tilde z_\omega^{\pm, \uparrow} = \pm \frac{z_\omega^{\pm, \uparrow}(\theta)}{\pm \omega^{-1} + G'(\theta) (1 - \omega^2)^{-1/2}}.\]
Observe that $\tilde z_\omega^{\pm, \uparrow}$ also satisfies the properties~\eqref{eq:z1} and~\eqref{eq:z2} enjoyed by $z_\omega^{\pm, \uparrow}$ because $\Omega$ is $\lambda$-subcritical. Then we can write
\[K_\omega^\pm(\theta, \theta') = \kappa_\omega \big(\theta - \gamma^\pm(\theta') \mp i \epsilon \tilde z_\omega^{\pm, \uparrow}(\theta') \big)^{-1} + R_\omega^\pm(\theta, \theta')\]
where 
\begin{equation}\label{eq:big_remainder}
    R_\omega^\pm(\theta, \theta') \coloneqq \kappa_\omega \frac{1 - \psi_\omega(\theta, \gamma^\pm(\theta'))}{\psi_\omega(\theta, \gamma^\pm(\theta')) \cdot (\theta - \gamma^\pm(\theta')) \mp i \epsilon \tilde z_\omega^{\pm, \uparrow}(\theta')} \cdot \frac{\theta - \gamma^\pm(\theta')}{\theta - \gamma^\pm(\theta') \mp i \epsilon \tilde z_\omega^{\pm, \uparrow}(\theta')}.
\end{equation}
Observe that by~\eqref{eq:z1} and~\eqref{eq:z2}, 
\[\left|\partial_\theta^\alpha \partial_{\theta'}^\beta \frac{\theta - \gamma^\pm(\theta')}{\theta - \gamma^\pm(\theta') \mp i \epsilon \tilde z_\omega^{\pm, \uparrow}(\theta')} \right| \le C_{\alpha, \beta}.\]
The other term in the remainder~\eqref{eq:big_remainder} can be analyzed using~\eqref{eq:psi_prop} together with~\eqref{eq:z1} and~\eqref{eq:z2}, and one similarly finds
\[\left|\partial_\theta^\alpha \partial_{\theta'}^\beta \frac{1 - \psi_\omega(\theta, \gamma^\pm(\theta'))}{\psi_\omega(\theta, \gamma^\pm(\theta'))\cdot (\theta - \gamma^\pm(\theta')) \mp i \epsilon \tilde z_\omega^{\pm, \uparrow}(\theta')} \right| \le  C_{\alpha, \beta} \langle \theta - \theta' \rangle^{-1}.\]
Therefore, $R_{\omega}^\pm \in \Psi_{\mathrm{res}}(\R)$ uniformly for $\epsilon > 0$ sufficiently small.

Again, this kernel is consistent with the propagation. In particular, $K^+_\omega$ is mapping the negative frequencies on $\Omega_{\uparrow}$ to negative frequencies on $\Omega_{\downarrow}$ via $\gamma^+$, and $K^-_\omega$ is mapping the positive frequencies on $\Omega_{\uparrow}$ to positive frequencies on $\Omega_{\downarrow}$ via $\gamma^-$. So we again see that positive frequencies propagate to the left and negative frequencies propagate to the right when $\epsilon > 0$.

We collect the above kernel computations in the following proposition and compute the symbols. Using the coordinates $\theta$, we identify (say smooth and compactly supported) 1-forms on the boundary $\partial \Omega$ with the pair $(v_\uparrow(\theta), v_\downarrow(\theta)) \md \theta$ where $v_\bullet \in \CIc(\R_\bullet)$, $\bullet = \uparrow, \downarrow$. 
\begin{proposition}\label{prop:kernel}
The operator $\md \mathcal C_\omega$ is given by 
    \[\md \mathcal C_\omega = \begin{pmatrix} \mathscr H & B^{+, \uparrow}_{\omega} (\gamma^+)^*  + B^{-, \uparrow}_{\omega} (\gamma^-)^* \\ B^{-, \downarrow}_{\omega} (\gamma^+)^*  + B^{+, \downarrow}_{\omega} (\gamma^-)^* & \mathscr H\end{pmatrix} + \mathcal R_\omega\]
    where $(\gamma^\pm)^*$ act as pull-backs on 1-forms, $\mathscr H \coloneqq -2 \pi i \kappa_\omega \Op(\sgn(\xi))$ denotes a multiple of the Hilbert transform, and for $\bullet=\uparrow, \downarrow$,
    \[B_\omega^{\pm, \bullet} = \Op(b_\omega^{\pm, \bullet}), \qquad b_\omega^{\pm, \bullet}(\theta', \xi) = \mp 2 \pi i \kappa_\omega H(\pm \xi) e^{-\epsilon z_\omega^{\pm, \bullet}(\theta')|\xi|}\]
    where $z^{\pm, \bullet}_\omega$ satisfies~\eqref{eq:z1} and~\eqref{eq:z2}, and every entry of $\mathcal R_\omega$ lies in $\Psi_{\mathrm{res}}(\R)$. 
\end{proposition}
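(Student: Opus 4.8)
The operator $\md\mathcal C_\omega$ preserves the splitting $\partial\Omega=\partial\Omega_\uparrow\sqcup\partial\Omega_\downarrow$, so it is a $2\times2$ matrix of operators on $\R$ whose Schwartz kernels are $K_\omega=K_\omega^++K_\omega^-$ restricted to the four products of boundary components, and these restrictions are exactly the Cases~1--4 computed above. The proof is then a matter of assembling those four computations, summing over $\pm$, and reading off the symbols. The one structural ingredient is the choice of the \emph{right} quantization: a one-line contour computation shows that for $z\in C^\infty(\R)$ with $\Re z\ge c>0$ the function $\kappa_\omega\big(\theta-\theta'+i\epsilon z(\theta')\big)^{-1}$ is \emph{exactly} (not merely modulo $\Psi_{\mathrm{res}}(\R)$) the Schwartz kernel of $\Op\big(-2\pi i\kappa_\omega H(\xi)e^{-\epsilon z(\theta')|\xi|}\big)$, and $\kappa_\omega\big(\theta-\theta'-i\epsilon z(\theta')\big)^{-1}$ that of $\Op\big(2\pi i\kappa_\omega H(-\xi)e^{-\epsilon z(\theta')|\xi|}\big)$; degenerating $\epsilon z$ to $\pm i0$ gives that $\kappa_\omega(\theta-\theta'\pm i0)^{-1}$ is the kernel of $\Op\big(\mp 2\pi i\kappa_\omega H(\pm\xi)\big)$. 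I would record these identities first.

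\emph{Diagonal blocks.} Here I use Cases~1 and~3. In Case~1 the sum $K_\omega^++K_\omega^-$ is exactly $\kappa_\omega\big[(\theta-\theta'-i0)^{-1}+(\theta-\theta'+i0)^{-1}\big]$, which by the identities above is the Schwartz kernel of $\Op\big(-2\pi i\kappa_\omega(H(\xi)-H(-\xi))\big)=-2\pi i\kappa_\omega\Op(\sgn\xi)=\mathscr H$. In Case~3 the same leading term appears --- the sum over $\pm$ of $\kappa_\omega(\theta-\theta'\pm i0)^{-1}$ is again the kernel of $\mathscr H$ --- together with the remainders $R_\omega^\pm$, which were shown there to lie in $\Psi_{\mathrm{res}}(\R)$ uniformly in $\epsilon$; these go into $\mathcal R_\omega$.

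\emph{Off-diagonal blocks.} Here I use Cases~2 and~4. There each $K_\omega^\pm$ equals $\kappa_\omega\big(\theta-\gamma^\pm(\theta')\pm i\epsilon w_\omega^\pm(\theta')\big)^{-1}$ (plus, in Case~4, a $\Psi_{\mathrm{res}}(\R)$ term), where $w_\omega^\pm$ is one of the functions of Lemma~\ref{lem:invol_errors} multiplied by a smooth factor that is bounded away from $0$ by subcriticality --- namely $\omega$ in Case~2 and $\big(\pm\omega^{-1}+G'(1-\omega^2)^{-1/2}\big)^{-1}$ in Case~4 --- so that $w_\omega^\pm$ still obeys \eqref{eq:z1} and \eqref{eq:z2}. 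Changing variables $\theta''=\gamma^\pm(\theta')$, which is legitimate because $\gamma^\pm$ is a smooth involution equal to the identity outside a compact set, factors the operator as $\tilde A_\omega^\pm\circ(\gamma^\pm)^*$, with $(\gamma^\pm)^*$ the pull-back on $1$-forms and $\tilde A_\omega^\pm$ having Schwartz kernel $\kappa_\omega\big(\theta-\theta''\pm i\epsilon(w_\omega^\pm\circ\gamma^\pm)(\theta'')\big)^{-1}$; by the contour identities $\tilde A_\omega^\pm=\Op(b_\omega^{\pm,\bullet})=B_\omega^{\pm,\bullet}$ with $z_\omega^{\pm,\bullet}:=w_\omega^\pm\circ\gamma^\pm$ (which inherits \eqref{eq:z1}--\eqref{eq:z2}). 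The superscript $\pm$ in $B_\omega^{\pm,\bullet}$ is dictated by the sign of the $i\epsilon$-regularization, i.e.\ by whether one lands on $H(+\xi)$ or $H(-\xi)$, and since the regularizing signs in Cases~2 and~4 are opposite, the superscripts in the upper-right and lower-left matrix entries come out swapped, exactly as in the statement. The remainders $R_\omega^\pm$, composed on the right with $(\gamma^\pm)^*$ or with the scattering operators occurring here, remain in $\Psi_{\mathrm{res}}(\R)$ --- stability under a $\gamma^\pm$-pull-back is immediate since $\gamma^\pm$ is the identity outside a compact set, and under composition with order-$(s,0)$ scattering operators it is Lemma~\ref{lem:residual} --- and are collected into $\mathcal R_\omega$.

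\emph{Main difficulty.} The substantive work has already been spent in Cases~1--4 (the contour limits, the subcriticality bounds \eqref{eq:GF_subcrit}, and Lemma~\ref{lem:invol_errors}); what remains is essentially bookkeeping --- matching the four case computations to the four matrix entries, tracking the $\pm$ sums, and checking that $z_\omega^{\pm,\bullet}$ retains \eqref{eq:z1}--\eqref{eq:z2} after composition with $\gamma^\pm$ and multiplication by the bounded-below factors above. The only point that needs genuine care is the \emph{exactness} of the symbol identifications --- that the off-diagonal leading pieces equal $B_\omega^{\pm,\bullet}(\gamma^\pm)^*$ on the nose rather than only modulo $\Psi_{\mathrm{res}}(\R)$ --- which is precisely the reason the right quantization was adopted, together with the stability of $\Psi_{\mathrm{res}}(\R)$ under the relevant compositions, which reduces to Lemma~\ref{lem:residual}.
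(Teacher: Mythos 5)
Your proof is correct and takes essentially the same route as the paper: assemble the four case-by-case kernel computations from \S\ref{sec:kernel} into the $2\times 2$ matrix, observe that under the right quantization the leading kernels $\kappa_\omega(\theta-\theta'\pm i\epsilon z(\theta'))^{-1}$ are exactly the claimed scattering operators $\Op(\mp 2\pi i\kappa_\omega H(\pm\xi)e^{-\epsilon z(\theta')|\xi|})$, and collect remainders in $\Psi_{\mathrm{res}}(\R)$. One small factual slip worth noting: $\gamma^\pm$ is \emph{not} the identity outside a compact set --- outside $\supp G$ it is the translation $\theta\mapsto\theta\mp\pi/c_\lambda$ --- but the properties you actually need from it do hold for exactly that reason: $(\gamma^\pm)'\equiv 1$ outside a compact set, so the scattering change-of-variables formula \eqref{eq:cov} applies, and $\gamma^\pm(\theta')-\theta'$ is bounded, so $\langle\theta-\gamma^\pm(\theta')\rangle\sim\langle\theta-\theta'\rangle$ and the $\Psi_{\mathrm{res}}(\R)$ kernel estimate \eqref{eq:kernel_decay} is preserved under precomposition with $(\gamma^\pm)^*$.
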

\begin{Remarks}
1. The convention here is that $B^{\pm, \bullet}_\omega$ has operator wavefront set on the positive/negative frequencies, and the $\bullet$ indicates the component of boundary on which the operator acts. 

\noindent
2. By a slight abuse of notation, $z_\omega^{\pm, \bullet}$ may differ from those defined in Lemma~\ref{lem:invol_errors}, but they satisfy the same properties as in the lemma. 
\end{Remarks}
\begin{proof}
    The four entries of $\md \mathcal C_\omega$ correspond to the four cases above, so we compute the symbols accordingly. 

    \noindent
    {\bf Case 1:} $\theta, \theta' \in \R_{\uparrow}$. Note that 
    \[K_\omega^\pm = \kappa_\omega (\theta - \theta' \mp i0)^{-1} = \pm i \kappa_\omega \int e^{i (\theta - \theta') \xi} H(\mp \xi) \, \md \xi .\]
    So it follows that the corresponding entry is given by $- 2 \pi i \Op(H(\mp \xi))$. 

    \noindent
    {\bf Case 2:} $\theta \in \R_\uparrow$, $\theta' \in \R_{\downarrow}$. We see that 
    \[K_\omega^\pm(\theta, \gamma^{\pm}(\theta')) = \mp i\kappa_\omega \int e^{i (\theta - \theta') \xi} H(\pm \xi) e^{- \epsilon |\xi|z_\omega^{\pm, \downarrow}(\gamma^\pm(\theta'))} \, \md \xi.\]
    Note that $z^{\pm, \downarrow}_{\omega}(\gamma^\pm(\theta'))$ satisfies~\eqref{eq:z1} and~\eqref{eq:z2}. Then upon redefining $z^{\pm, \uparrow}_{\omega}(\theta') = z^{\pm, \downarrow}_{\omega}(\gamma^\pm(\theta'))$, it follows that the corresponding entry is given by 
    \[K^+_\omega + K^-_\omega = B^{+, \uparrow}_\omega (\gamma^+)^* + B^{-, \uparrow}_\omega (\gamma^-)^*. \]
    where $(\gamma^\pm)^*$ denotes the pullback on 1-forms. 

    For the remaining two cases, the analysis for {\bf Case 3} is nearly identical to {\bf Case 1} and the analysis for {\bf Case 4} is nearly identical to {\bf Case 2}. 
\end{proof}

The following mapping property follows immediately from Proposition~\ref{prop:kernel}.
\begin{corollary}\label{cor:diffC}
    Let $\omega \in \mathcal J + i(0, \infty)$. Then 
    \[\partial_\omega^k \md\mathcal C_\omega : H^{s + k}_{\comp}(\partial \Omega; T^* \partial \Omega) \to H^{s}_{\loc}(\partial \Omega; T^* \partial \Omega)\]
    uniformly in $\omega$. 
\end{corollary}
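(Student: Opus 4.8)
The plan is to read off the claim directly from the structure exhibited in Proposition~\ref{prop:kernel}, using the two basic calculus facts recalled in \S\ref{sec:pre}: that elements of $\Psi^{m,0}(\R)$ map $H^{s,\alpha}_{\loc}\to H^{s-m,\alpha}_{\loc}$ (after cutting off), that pullback by $\gamma^\pm$ preserves the local Sobolev scale (since $\gamma^\pm$ is a smooth diffeomorphism of $\partial\Omega$, affine outside a compact set in the $\theta$ coordinate), and that elements of $\Psi_{\mathrm{res}}(\R)$ are smoothing, hence harmless. The one point requiring care is that differentiating in $\omega$ costs exactly one order of regularity per derivative, which is why the source space gains $k$ in its Sobolev index.

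First I would write $\md\mathcal C_\omega$ in the block form of Proposition~\ref{prop:kernel} and observe that $\mathcal R_\omega$ and all its $\omega$-derivatives remain in $\Psi_{\mathrm{res}}(\R)$: each entry is of the form $\kappa_\omega (1-\psi_\omega)/(\cdots)$ with denominators uniformly bounded below in modulus (by subcriticality, as in~\eqref{eq:psi_prop},~\eqref{eq:z1},~\eqref{eq:z2}), so differentiating in $\omega$ produces the same type of expression with one more factor in the denominator and smooth numerators, still satisfying~\eqref{eq:kernel_decay} uniformly; by Lemma~\ref{lem:residual} this contributes $H^{s+k}_{\comp}\to H^s_{\loc}$ trivially. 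The Hilbert-transform entries $\mathscr H = -2\pi i\kappa_\omega\Op(\sgn\xi)\in\Psi^{0,0}(\R)$ have $\partial_\omega^k\mathscr H = -2\pi i(\partial_\omega^k\kappa_\omega)\Op(\sgn\xi)$, still in $\Psi^{0,0}$ with uniform bounds for $\omega\in\mathcal J+i(0,\infty)$ since $\kappa_\omega$ and its derivatives are uniformly bounded there; these act $H^s_{\loc}\to H^s_{\loc}$, with room to spare.

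The main work is the off-diagonal entries $B_\omega^{\pm,\bullet}(\gamma^\pm)^*$ with $B_\omega^{\pm,\bullet}=\Op\big(\mp 2\pi i\kappa_\omega H(\pm\xi)e^{-\epsilon z_\omega^{\pm,\bullet}(\theta')|\xi|}\big)$. Here I would split the symbol using the frequency cutoffs $h_\pm$ of~\eqref{eq:freq_cutoff}: by Lemma~\ref{lem:cutoff_decomp}, replacing $H(\pm\xi)$ by $h_\pm(\pm\xi)$ changes $B_\omega^{\pm,\bullet}$ by an operator in $\Psi_{\mathrm{res}}(\R)$ (uniformly), which is harmless. The remaining piece $\Op\big(\mp2\pi i\kappa_\omega h_\pm(\pm\xi)e^{-\epsilon z_\omega^{\pm,\bullet}(\theta')|\xi|}\big)$ has a symbol that is genuinely smooth in $\xi$ on $\supp h_\pm(\pm\cdot)$, and the exponential damping factor $e^{-\epsilon z_\omega^{\pm,\bullet}|\xi|}$ has symbol seminorms that are \emph{bounded uniformly in $\epsilon>0$} in $S^{0,0}$ (each $\xi$-derivative brings down a factor $\epsilon z_\omega|\xi|^0$-type quantity controlled using $\Re z_\omega>0$, exactly the kind of estimate behind Lemma~\ref{lem:cutoff_decomp}), so $B_\omega^{\pm,\bullet}\in\Psi^{0,0}(\R)$ uniformly. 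Now $\partial_\omega^k$ applied to this symbol: each $\omega$-derivative either hits $\kappa_\omega$ (bounded), or hits the exponent $\epsilon z_\omega^{\pm,\bullet}(\theta')|\xi|$, producing a factor $\epsilon(\partial_\omega z_\omega^{\pm,\bullet})|\xi|$ times the same exponential. Since $\epsilon e^{-c\epsilon|\xi|}\lesssim \langle\xi\rangle^{-1}$ uniformly in $\epsilon$, each such differentiation costs exactly one power of $\xi$, i.e. $\partial_\omega^k B_\omega^{\pm,\bullet}\in\Psi^{k,0}(\R)$ uniformly in $\omega\in\mathcal J+i(0,\infty)$. Composing with the pullback $(\gamma^\pm)^*$ (order zero on the Sobolev scale, with the $\theta$-affine-at-infinity property making the scattering change-of-variables formula~\eqref{eq:cov} applicable), and summing over the finitely many blocks via Leibniz, gives $\partial_\omega^k\md\mathcal C_\omega: H^{s+k}_{\comp}\to H^s_{\loc}$ uniformly, which is the claim. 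The main obstacle, such as it is, is bookkeeping the uniformity in $\epsilon$ of these symbol estimates — but this is precisely what was already established for the building-block symbols in~\eqref{eq:cutoff_symbol} and Lemmas~\ref{lem:residual},~\ref{lem:cutoff_decomp}, so no new idea is needed.
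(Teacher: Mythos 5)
Your proposal takes the same route as the paper (which simply reads the statement off the block decomposition in Proposition~\ref{prop:kernel}), and it lands on the correct conclusion, but one step of bookkeeping in the $B$-block is garbled and a small $\omega$-dependence is silently dropped.

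First, the bookkeeping. When you differentiate $b_\omega^{\pm,\bullet}(\theta',\xi) = \mp 2\pi i\,\kappa_\omega\, H(\pm\xi)\, e^{-\epsilon z_\omega^{\pm,\bullet}(\theta')|\xi|}$ in $\omega$, the quantity in the exponent, $\epsilon z_\omega^{\pm,\bullet}(\theta')|\xi|$, is (up to a sign and a factor of $i$) the holomorphic function of $\omega$ supplied by Lemma~\ref{lem:invol_errors}, i.e.\ $\ell^\pm(\mathbf x(\theta),\omega)-\ell^\pm(\gamma^\pm(\mathbf x(\theta)),\omega)$, multiplied by $|\xi|$. The $\omega$-derivative therefore brings down $\partial_\omega\bigl(\epsilon z_\omega^{\pm,\bullet}(\theta')\bigr)\,|\xi|$, and $\partial_\omega(\epsilon z_\omega)$ is $\mathcal O(1)$, \emph{not} $\mathcal O(\epsilon)$ --- the $\epsilon$-smallness of $\epsilon z_\omega$ itself is precisely the $\mathcal O(\epsilon)$ vanishing of the numerator at $\epsilon=0$, and that vanishing does not survive one $\omega$-derivative. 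You wrote the derived factor as $\epsilon(\partial_\omega z_\omega)|\xi|$ and then invoked $\epsilon e^{-c\epsilon|\xi|}\lesssim\langle\xi\rangle^{-1}$; if that chain were followed through, it would improve the derivative to order $0$, which contradicts the order-$1$ conclusion you (correctly) state a line later. The honest reason each $\omega$-derivative costs one order is simply that $\partial_\omega(\epsilon z_\omega)\cdot|\xi|$ is $\mathcal O(|\xi|)$, with $|e^{-\epsilon z_\omega|\xi|}|\le 1$ uniformly; the $\epsilon$-gain estimate is both misapplied and unneeded here. (It \emph{is} needed for the $\xi$-derivative bounds that put $b_\omega$ in $S^{0,0}$ uniformly, which is where Lemma~\ref{lem:cutoff_decomp} uses it.)

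Second, a small omission. In Proposition~\ref{prop:kernel} the pullbacks $(\gamma^\pm)^*$ are pullbacks by $\gamma^\pm_\lambda$ with $\lambda = \Re\omega$, so they do depend on $\omega$. Your Leibniz step should therefore also differentiate $(\gamma^\pm)^*$; each $\partial_\omega(\gamma^\pm_\lambda)^*$ is a first-order differential operator composed with a pullback, again costing exactly one order per derivative, so the net count is unchanged and the conclusion $H^{s+k}_{\comp}\to H^s_{\loc}$ stands. Once these two points are cleaned up, your argument is exactly the paper's ``follows immediately from Proposition~\ref{prop:kernel}'' made explicit.
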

The higher derivatives in the spectral parameter are again needed to establish regularity of the spectral measure.

\section{Global high frequency estimate}\label{sec:global}
The goal of this section is to prove the semi-Fredholm estimate found in Proposition~\ref{prop:SFE}. To do so, we link the high frequency decay estimates of Proposition~\ref{prop:high_decay} to the low frequency decay estimates of Proposition~\ref{prop:low_decay} by propagating through the black box region. We will use the black box construction and notations found in~\eqref{eq:bb1}--\eqref{eq:bb3} throughout this section.

\subsection{Propagation inside the black box}
We proceed to establish propagation estimates for the boundary reduced problem inside the black box. For the sake of clarity, we focus our attention to how positive frequencies propagate. The analysis for negative frequencies is nearly identical with propagation in the opposite direction to the positive frequencies. Let 
\[\widetilde \Pi_+ \coloneqq \begin{pmatrix} \Op(h_+(\xi)) & 0 \\ 0 & \Op(h_+(\xi)) \end{pmatrix}\]
where $h_+ \in C^\infty(\R; \R)$, $h_+(\xi) = 1$ for $\xi \ge 1$ and $h_+(\xi) = 0$ for $\xi \le 1/2$. By a slight abuse of notation, we will also write $\widetilde \Pi_+ = \Op(h_+(\xi))$, since it will be clear from context what space $\widetilde \Pi_+$ acts on. It follows from the quantization formula that 
\[\widetilde \Pi_+ B^{-, \bullet}_\omega = 0, \quad \bullet = \uparrow,\, \downarrow.\]
Then
\begin{equation}\label{eq:pdC0}
    \widetilde \Pi_+ \md \mathcal C_\omega = - 2 \pi i \kappa_\omega \widetilde \Pi_+ + \begin{pmatrix} 0 & \widetilde \Pi_+ B^{+, \uparrow}_\omega (\gamma^+)^* \\ \widetilde \Pi_+ B^{+, \downarrow}_\omega (\gamma^-)^* & 0\end{pmatrix} + \mathcal R_\omega
\end{equation}
where $\mathcal R_\omega$ may change from line to line, but each component of $\mathcal R_\omega$ always lies uniformly in $\Psi_{\mathrm{res}}(\R)$ for $\epsilon > 0$ sufficiently small. Note that by~\eqref{eq:cov}, we have
\[(\gamma^\pm)^*\widetilde \Pi_+ (\gamma^\pm)^* - \widetilde \Pi_+ \in \Psi^{-\infty, -\infty}(\R) \subset \Psi_{\mathrm{res}}(\R),\]
and by the composition formula~\eqref{eq:composition} and Lemma~\ref{lem:cutoff_decomp}, we see that 
\[[\widetilde \Pi_+, B_\omega^{+, \bullet}] \in \Psi^{-\infty, -\infty}(\R) + \Psi_{\mathrm{res}}(\R) \subset \Psi_{\mathrm{res}}(\R).\]
Therefore, it follows that 
\begin{align*}
    \widetilde \Pi_+ B_\omega^{+, \bullet} (\gamma^\pm)^* &= [\widetilde \Pi_+, B_\omega^{+, \bullet}] (\gamma^\pm)^* + B_\omega^{+, \bullet} \widetilde \Pi_+ (\gamma^\pm)^* \\
    &= [\widetilde \Pi_+, B_\omega^{+, \bullet}] (\gamma^\pm)^* + B_\omega^{+, \bullet}(\gamma^\pm)^* \widetilde \Pi_+ + B_\omega^{+, \bullet}(\gamma^\pm)^*((\gamma^\pm)^* \widetilde \Pi_+ (\gamma^\pm)^* - \widetilde \Pi_+) \\
    &= B_\omega^{+, \bullet}(\gamma^\pm)^* \widetilde \Pi_+ + \mathcal R_\omega.
\end{align*}
Then we can rearrange~\eqref{eq:pdC0} to 
\begin{equation}\label{eq:pdC}
    \widetilde \Pi_+ \md \mathcal C_\omega = \left[ -2 \pi i \kappa_\omega \Id  + \begin{pmatrix} 0 & \widetilde \Pi_+ B^{+, \uparrow}_\omega (\gamma^+)^* \\ \widetilde \Pi_+ B^{+, \downarrow}_\omega (\gamma^-)^* & 0\end{pmatrix}\right] \widetilde \Pi_+ + \mathcal R_\omega.
\end{equation}
Put
\begin{equation}\label{eq:tilde_v}
    \tilde v_\omega^+ \coloneqq \widetilde \Pi_+ \chi_\mb v_\omega
\end{equation}
where recall that $\chi_\mb$ is the black box cutoff. By the composition formula~\eqref{eq:composition}, for all $\chi\in C_c^{\infty}(\RR)$, $[\widetilde \Pi_+, \chi] \in \Psi^{-\infty, -\infty}(\R) \subset \Psi_{\mathrm{res}}(\R)$. Then by~\eqref{eq:bdr} and~\eqref{eq:pdC}, we see that for every $\chi \in \CIc(\partial \Omega)$, 
\begin{equation}\label{eq:cohom}
    \begin{aligned}
        \widetilde \Pi_+ \chi (r_\omega + g_\omega) &= \widetilde \Pi_+ \chi \md \mathcal C_\omega (\chi_\mb v_\omega) \\
        &= \chi \widetilde \Pi_+ \md \mathcal C_\omega (\chi_\mb v_\omega) + \mathcal R_\omega \chi_\mb v_\omega \\
        &= - 2 \pi i \kappa_\omega \chi \tilde v_\omega^+ + \begin{pmatrix} 0 & \chi B^{+, \uparrow}_\omega (\gamma^+)^* \\ \chi B^{+, \downarrow}_\omega (\gamma^-)^* & 0\end{pmatrix} \tilde v_\omega^+ + \mathcal R_\omega \chi_\mb v_\omega.
    \end{aligned}
\end{equation}
It is helpful to interpret~\eqref{eq:cohom} as a microlocal cohomological equation (see for instance \cite[(2.10)]{LiWaWu:24} and \cite[(2.6)]{CoLi:24}) --- without the cutoff $\chi$ and the operators $B^{+, \bullet}_\omega$, the right-hand-side of~\eqref{eq:cohom} is simply identity plus pull-back.

\begin{lemma}\label{lem:bb_prop}
    Let $\chi = (\chi_{\uparrow}, \chi_{\downarrow}) \in \CIc(\partial \Omega)$ be such that $\supp \chi \subset \{|x_1| < L\}$. For $\bullet = \uparrow$, $\downarrow$, or $\mb$, let $\widetilde \chi_\bullet \in \CIc(\partial \Omega)$ be such that $\widetilde \chi_\bullet = 1$ on $\supp \chi_\bullet$. Then 
    \begin{equation*}\begin{split}
        \|\widetilde \Pi_\pm \chi v_\omega \|_{H^s(\partial \Omega)} \le C \big( & \|\widetilde \Pi_\pm(\widetilde \chi_\uparrow \circ \gamma^\pm) v_\omega\|_{H^s(\partial \Omega_\downarrow)} + \|\widetilde \Pi_\pm(\widetilde \chi_\downarrow \circ \gamma^\mp) v_\omega\|_{H^s(\partial \Omega_\uparrow)} \\
        & + \|\widetilde \chi_\mb u_\omega\|_{\bar H^{2}(\Omega)} + \|f\|_{H^{s + 1}(\Omega)} \big).
    \end{split}\end{equation*}
\end{lemma}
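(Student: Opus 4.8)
The plan is to solve the microlocal cohomological equation~\eqref{eq:cohom} for the localized quantity $\chi\tilde v_\omega^+$, reading off the pull-back terms on its right-hand side as the main contribution and absorbing everything else into the two error norms $\|\widetilde\chi_\mb u_\omega\|_{\bar H^2(\Omega)}$ and $\|f\|_{H^{s+1}(\Omega)}$. I will only treat the positive frequencies; the $\widetilde\Pi_-$ case is identical after swapping $+\leftrightarrow-$ and $\gamma^+\leftrightarrow\gamma^-$ throughout, using that $\widetilde\Pi_-$ annihilates the operators $B^{+,\bullet}_\omega$ of Proposition~\ref{prop:kernel} rather than the $B^{-,\bullet}_\omega$. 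The whole argument rests on two routine devices for disposing of error terms uniformly in $\omega$: the mapping property $\chi R\chi : H^{-N}\to H^N$ for $R\in\Psi_{\mathrm{res}}(\R)$, and the trace bound $\|\chi_\mb v_\omega\|_{H^{-N}(\partial\Omega)}\lesssim\|\widetilde\chi_\mb u_\omega\|_{\bar H^2(\Omega)}$, valid because $v_\omega=\mathcal N_\omega u_\omega$ is obtained from $u_\omega$ by a first-order differential operator with coefficients bounded together with all their derivatives, uniformly for $\omega\in\mathcal J+i(0,\infty)$.

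First I would reduce the left-hand side. Since $\chi_\mb\equiv 1$ on $\{|x_1|\le 2L\}\supset\supp\chi$ we have $\chi\chi_\mb=\chi$, hence $\widetilde\Pi_+\chi v_\omega=\chi\tilde v_\omega^++[\widetilde\Pi_+,\chi]\chi_\mb v_\omega$, and $[\widetilde\Pi_+,\chi]\in\Psi^{-\infty,-\infty}(\R)$ by~\eqref{eq:composition}, so the second term is handled by the two devices above; it thus suffices to estimate $\|\chi\tilde v_\omega^+\|_{H^s(\partial\Omega)}$. Invoking~\eqref{eq:cohom} and dividing by $-2\pi i\kappa_\omega=(2\omega\sqrt{1-\omega^2})^{-1}$, which is uniformly bounded away from $0$, I express $\chi\tilde v_\omega^+$ through $\widetilde\Pi_+\chi(r_\omega+g_\omega)$, the off-diagonal pull-back matrix on the right of~\eqref{eq:cohom} applied to $\tilde v_\omega^+=(\widetilde\Pi_+\chi_\mb v_{\omega,\uparrow},\widetilde\Pi_+\chi_\mb v_{\omega,\downarrow})$, and a $\Psi_{\mathrm{res}}(\R)$-error applied to $\chi_\mb v_\omega$. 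Using that $\widetilde\Pi_+$ is bounded on $H^s$, Lemma~\ref{lem:rw_est} bounds the $r_\omega$ contribution by $\|\widetilde\chi_\mb u_\omega\|_{L^2}\le\|\widetilde\chi_\mb u_\omega\|_{\bar H^2}$ and Lemma~\ref{lem:gw_bound} bounds the $g_\omega$ contribution by $\|f\|_{H^{s+1}}$; the $\Psi_{\mathrm{res}}(\R)$-error is again absorbed by the two devices.

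The heart of the proof is the pull-back matrix, say its first entry $\chi_\uparrow B^{+,\uparrow}_\omega(\gamma^+)^*\widetilde\Pi_+\chi_\mb v_{\omega,\downarrow}$, which I would treat in four steps. (i) Write $B^{+,\uparrow}_\omega=\Op(b)+\mathcal R_\omega$ using Lemma~\ref{lem:cutoff_decomp}, where $\Op(b)\in\Psi^{0,0}(\R)$ is a genuine, hence pseudolocal, operator uniformly in $\epsilon$. (ii) By pseudolocality and the off-diagonal decay~\eqref{eq:rapid_off_diag}, replace $\chi_\uparrow\Op(b)$ by $\chi_\uparrow\Op(b)\widehat\chi_\uparrow$ modulo $\Psi_{\mathrm{res}}(\R)$, where $\widehat\chi_\uparrow\in\CIc$ equals $1$ near $\supp\chi_\uparrow$ (the residual and $(\gamma^\pm)^*$-conjugation errors being handled via~\eqref{eq:cov} and Lemma~\ref{lem:residual} together with the two devices). (iii) Since $\gamma^+$ is an involution, $\widehat\chi_\uparrow(\gamma^+)^*=(\gamma^+)^*(\widehat\chi_\uparrow\circ\gamma^+)$, so $H^s$-boundedness of $(\gamma^+)^*$ and of $\chi_\uparrow\Op(b)$ reduces the estimate to $\|(\widehat\chi_\uparrow\circ\gamma^+)\widetilde\Pi_+\chi_\mb v_{\omega,\downarrow}\|_{H^s}$. (iv) Commute $\widehat\chi_\uparrow\circ\gamma^+$ past $\widetilde\Pi_+$ (a $\Psi^{-\infty,-\infty}$ error) and use the scale hierarchy~\eqref{eq:bb1}--\eqref{eq:bb3}: a single application of $\gamma^\pm$ displaces a point by at most $(\pi-\min G)/c_\lambda<M/10$, so $\supp(\widehat\chi_\uparrow\circ\gamma^+)=\gamma^+(\supp\widehat\chi_\uparrow)\subset\{|x_1|\le 2L\}$ where $\chi_\mb\equiv 1$, whence $(\widehat\chi_\uparrow\circ\gamma^+)\chi_\mb=\widehat\chi_\uparrow\circ\gamma^+$. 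This yields the term $\|\widetilde\Pi_+(\widehat\chi_\uparrow\circ\gamma^+)v_{\omega,\downarrow}\|_{H^s(\partial\Omega_\downarrow)}$; the second entry of the matrix produces $\|\widetilde\Pi_+(\widehat\chi_\downarrow\circ\gamma^-)v_{\omega,\uparrow}\|_{H^s(\partial\Omega_\uparrow)}$ in the same way. Shrinking $\widehat\chi_\bullet$ inside $\{\widetilde\chi_\bullet\equiv 1\}$ and commuting once more replaces $\widehat\chi_\bullet$ by the given $\widetilde\chi_\bullet$, and collecting all the errors completes the $+$ case.

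I expect the main obstacle to lie in step (iv) and the accompanying bookkeeping: one must be sure which frequency projection survives composition with which $B^{\pm,\bullet}_\omega$, that every residual error retains enough spatial localization (a left cutoff or rapid decay in the first variable) for $\chi R\chi : H^{-N}\to H^N$ to apply, and — crucially — that the black-box scale $L$ is large enough that one application of $\gamma^\pm$ cannot carry the support of an auxiliary cutoff outside $\{\chi_\mb\equiv 1\}$. This last point is precisely why the scales $M\ll L$ were fixed as in~\eqref{eq:bb1}--\eqref{eq:bb3}.
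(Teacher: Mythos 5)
Your proposal is correct and follows essentially the same route as the paper's proof: use the microlocal cohomological identity~\eqref{eq:cohom}, discard residual and $r_\omega$, $g_\omega$ contributions via Lemmas~\ref{lem:residual}, \ref{lem:rw_est}, \ref{lem:gw_bound} and the trace bound, and for the pull-back term insert a cutoff that commutes through $(\gamma^\pm)^*$ using the scale hierarchy~\eqref{eq:bb1}--\eqref{eq:bb3}. The only difference is cosmetic bookkeeping (you insert the auxiliary cutoff to the left of $(\gamma^\pm)^*$ and commute it across, whereas the paper conjugates $B_\omega^{\pm,\bullet}$ by $(\gamma^\pm)^*$ first and inserts the cutoff on the right), and both give the same estimate.
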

\begin{proof}
    We focus on the $\widetilde \Pi_+$ case since the $\widetilde \Pi_-$ case follows via the same argument with appropriate sign changes. We further assume that $\chi_\downarrow = 0$. Then using \eqref{eq:cohom}, we have
    \begin{equation}\begin{split}\label{eq:cohom_est}
        \|\chi_\uparrow \tilde v_\omega^+ \|_{H^s(\partial \Omega_\uparrow)} \lesssim 
        & \|\chi_\uparrow B_\omega^{+, \uparrow} (\gamma^+)^* \tilde v_\omega^+\|_{H^s(\partial \Omega_\uparrow)} \\
        & + \|\chi_\uparrow \mathcal R_\omega \chi_\mb v_\omega\|_{H^s(\partial\Omega_{\uparrow})} + \| \chi_\uparrow \widetilde \Pi_+(r_\omega + g_\omega))\|_{H^s(\partial \Omega_\uparrow)}.
    \end{split}\end{equation}
    First, we see that
    \begin{equation}\label{eq:rv_est}
        \|\chi_\uparrow \mathcal R_\omega \chi_\mb v_\omega\|_{H^s(\partial\Omega_{\uparrow})} \lesssim \|\chi_\mb v_\omega\|_{H^{-N}(\partial\Omega_{\uparrow})}.
    \end{equation}
    By Lemma~\ref{lem:rw_est} and~\ref{lem:gw_bound}, we also have 
    \begin{equation}\label{eq:r+g_est}
        \|\widetilde \Pi_+ \chi_\uparrow(r_\omega + g_\omega)\|_{H^s(\partial \Omega_\uparrow)} \lesssim \|f\|_{H^{s + 1}(\Omega)} + \|\widetilde \chi_\mb u_\omega\|_{L^2(\Omega)}.
    \end{equation}
    Next, observe that 
    \[\chi_\uparrow B_\omega^{+, \uparrow} (\gamma^+)^* = (\gamma^+)^*(\chi_\uparrow \circ \gamma^+) \widetilde B_\omega^{+, \uparrow} = (\gamma^+)^*(\chi_\uparrow \circ \gamma^+) \widetilde B_\omega^{+, \uparrow} (\widetilde \chi_\uparrow \circ \gamma^+) + \chi_\mb \mathcal R_{\omega}\]
    where $\widetilde B_\omega^{+, \uparrow} \coloneqq (\gamma^+)^* B_\omega^{+, \uparrow} (\gamma^+)^*$, and $\mathcal R_{\omega}$ is some operator in $\Psi_{\mathrm{res}}(\R)$. Therefore,  
    \begin{equation}\label{eq:transport_est}
        \|\chi_\uparrow B_\omega^{+, \uparrow} (\gamma^+)^* \tilde v_\omega^+\|_{H^s(\partial \Omega_\uparrow)} \lesssim \|(\widetilde \chi_\uparrow \circ \gamma^+) \tilde v_\omega^+\|_{H^s(\partial \Omega_\downarrow)} + \|\chi_\mb v_\omega\|_{H^{-N}(\partial\Omega)}.
    \end{equation}
    Note that since $v_\omega$ is the Neumann data of $u_\omega$,
    \begin{equation}\label{residualbound}
    \|\chi_\mb v_\omega\|_{H^{-N}(\partial\Omega)} \lesssim \|\chi_\mb u_\omega\|_{\bar H^2(\Omega)}.
    \end{equation}
    Then bounding the right-hand-side of~\eqref{eq:cohom_est} by~\eqref{eq:rv_est}--\eqref{eq:transport_est}, we find that 
    \begin{equation}\label{eq:tilde_v_est1}
        \|\chi_\uparrow \tilde v_\omega^+ \|_{H^s(\partial \Omega_\uparrow)} \lesssim \|(\widetilde \chi_\uparrow \circ \gamma^+) \tilde v_\omega^+\|_{H^s(\partial \Omega_\downarrow)} + \|\widetilde \chi_\mb u_\omega\|_{H^{2}(\Omega)} + \|f\|_{H^{s + 1}(\Omega)}.
    \end{equation}
    By our assumption, $\chi_\mb = 1$ on $\supp \chi_\uparrow$, and we may further assume without the loss of generality that $\chi_\mb = 1$ on $\supp \widetilde \chi_\uparrow \circ \gamma^+$. Note that such $\widetilde \chi_\uparrow$ always exists by our black box construction~\eqref{eq:bb1}--\eqref{eq:bb3}. Then we see that
    \[\chi_\uparrow \widetilde \Pi_+ \chi_\mb - \widetilde \Pi_+ \chi_\uparrow \in \Psi^{-\infty, -\infty}(\R), \quad (\widetilde \chi_\uparrow \circ \gamma^+) \widetilde \Pi_+ \chi_\mb - \widetilde \Pi_+ (\widetilde \chi_\uparrow \circ \gamma^+) \in \Psi^{-\infty, -\infty}(\R),\]
    Unwinding the definition of $\tilde v^+_\omega$ given in~\eqref{eq:tilde_v} and continuing to bound residual term on the boundary by using \eqref{residualbound}, it follows from~\eqref{eq:tilde_v_est1} that
    \[\|\widetilde \Pi_+ \chi_\uparrow v_\omega \|_{H^s(\partial \Omega_\uparrow)} \lesssim \|\widetilde \Pi_+ (\widetilde \chi_\uparrow \circ \gamma^+) v_\omega\|_{H^s(\partial \Omega_\downarrow)} + \|\widetilde \chi_\mb u_\omega\|_{\bar H^{2}(\Omega)} + \|f\|_{H^{s + 1}}.\]
    The case that $\chi_{\downarrow}$ is nontrivial and $\chi_{\uparrow}$ is trivial follows via a similar analysis with minor sign changes. 
\end{proof}

\subsection{Global propagation}
Now we iterate the above estimates to control the interior region $\{|x_1| \le M\}$ by the ends.
\begin{lemma}\label{lem:interior_prop}
    Let $\chi_{\mathrm{int}}, \chi_{\mathrm{R}}, \chi_{\mathrm{L}} \in \CIc(\partial \Omega)$ be such that 
    \begin{equation*}
    \begin{gathered}
        \chi_{\mathrm{int}} \equiv 1 \quad \text{for}  \quad |x_1| \le 2M, \quad \supp \chi_{\mathrm{int}} \subset \{|x_1| \le 4M\}, \\
        \chi_{\mathrm{R}} \equiv 1 \quad \text{for} \quad x_1 \in [4M, L], \quad \supp \chi_{\mathrm{R}} \subset \{|x_1|\le 2L\}, \\
        \chi_{\mathrm{L}}(x_1) = \chi_{\mathrm{R}}(-x_1).
    \end{gathered}
    \end{equation*}
    Then 
    \[\|\chi_{\mathrm{int}} v_\omega\|_{H^s(\partial \Omega)} \le C \big(\|\widetilde \Pi_+ \chi_{\mathrm{R}} v_\omega\|_{H^s(\partial \Omega)} + \|\widetilde \Pi_- \chi_{\mathrm{L}} v_\omega\|_{H^s(\partial \Omega)} + \|\widetilde \chi_\mb u_\omega\|_{\bar H^{2}(\Omega)} + \|f\|_{H^{s + 1}(\Omega)} \big).\]
\end{lemma}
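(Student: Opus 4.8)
The plan is to split $\chi_{\mathrm{int}} v_\omega$ into positive, negative and bounded frequencies and then to iterate the black box propagation estimate of Lemma~\ref{lem:bb_prop}, transporting the positive frequencies out the right end of the channel and the negative frequencies out the left end.

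First I would dispose of the bounded-frequency part. Write $\chi_{\mathrm{int}} v_\omega = \widetilde\Pi_+ \chi_{\mathrm{int}} v_\omega + \widetilde\Pi_- \chi_{\mathrm{int}} v_\omega + (\Id - \widetilde\Pi_+ - \widetilde\Pi_-)\chi_{\mathrm{int}} v_\omega$. Since $\Id - \widetilde\Pi_+ - \widetilde\Pi_- = \Op(1 - h_+ - h_-)$ is a Fourier multiplier with smooth, compactly supported symbol, it is convolution against a Schwartz function; composing with the compactly supported cutoff $\chi_{\mathrm{int}}$ and using the trace bound \eqref{residualbound} gives $\|(\Id - \widetilde\Pi_+ - \widetilde\Pi_-)\chi_{\mathrm{int}} v_\omega\|_{H^s(\partial\Omega)} \lesssim \|\chi_{\mathrm{int}} v_\omega\|_{H^{-N}(\partial\Omega)} \lesssim \|\widetilde\chi_\mb u_\omega\|_{\bar H^2(\Omega)}$. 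It then remains to bound $\widetilde\Pi_\pm \chi_{\mathrm{int}} v_\omega$, and because the $\widetilde\Pi_-$ argument is identical to the $\widetilde\Pi_+$ one after interchanging $\gamma^+$ with $\gamma^-$ (equivalently $b$ with $b^{-1}$) and $\chi_{\mathrm R}$ with $\chi_{\mathrm L}$, I would only write out the $\widetilde\Pi_+$ case.

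For $\widetilde\Pi_+$, decompose $\chi_{\mathrm{int}} = (\chi_{\mathrm{int},\uparrow},\chi_{\mathrm{int},\downarrow})$ and treat the two boundary components separately. One application of Lemma~\ref{lem:bb_prop} bounds $\widetilde\Pi_+$ applied to a cutoff living on $\partial\Omega_\uparrow$ by $\widetilde\Pi_+$ applied to that cutoff precomposed with $\gamma^+$ and living on $\partial\Omega_\downarrow$, plus the fixed errors $\|\widetilde\chi_\mb u_\omega\|_{\bar H^2(\Omega)}$ and $\|f\|_{H^{s+1}(\Omega)}$; a second application carries the cutoff back to $\partial\Omega_\uparrow$ via $\gamma^-$. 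Thus two applications advance a $\partial\Omega_\uparrow$-supported cutoff by the billiard map $b = \gamma^-\circ\gamma^+$, and likewise two applications advance a $\partial\Omega_\downarrow$-supported cutoff by $b^{-1}$. By the choice of $M$, $N$, $L$ in \eqref{eq:bb1}--\eqref{eq:bb3}, the part of $\partial\Omega_\uparrow$ in $\{|x_1|\le 4M\}$ is carried by $b^N$ into $\{4M < x_1 \le L\}$, the part of $\partial\Omega_\downarrow$ in $\{|x_1|\le 4M\}$ is carried by $b^{-N}$ into $\{4M < x_1 \le L\}$, and all intermediate iterates stay within $\{|x_1|\le L\}$. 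So one fixes a finite chain of auxiliary cutoffs, all supported in $\{|x_1| < L\}$ where $\chi_\mb \equiv 1$, and iterates Lemma~\ref{lem:bb_prop} a total of $2N$ times; the accumulated error is a fixed multiple of $\|\widetilde\chi_\mb u_\omega\|_{\bar H^2(\Omega)} + \|f\|_{H^{s+1}(\Omega)}$. Since the final cutoff is supported where $\chi_{\mathrm R}\equiv 1$, it may be replaced by $\chi_{\mathrm R}$ up to a residual commutator with $\widetilde\Pi_+$, which is in turn absorbed into $\|\chi_\mb u_\omega\|_{\bar H^2(\Omega)}$ via \eqref{residualbound}. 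This yields $\|\widetilde\Pi_+ \chi_{\mathrm{int}} v_\omega\|_{H^s(\partial\Omega)} \lesssim \|\widetilde\Pi_+ \chi_{\mathrm R} v_\omega\|_{H^s(\partial\Omega)} + \|\widetilde\chi_\mb u_\omega\|_{\bar H^2(\Omega)} + \|f\|_{H^{s+1}(\Omega)}$, together with the mirror-image estimate involving $\widetilde\Pi_-$ and $\chi_{\mathrm L}$; summing the three pieces of the frequency decomposition gives the claimed inequality.

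The only delicate point I anticipate is the bookkeeping of the supports of the auxiliary cutoffs through the (finitely many) iterations: one must verify that at each step they can be chosen inside $\{|x_1| < L\}$, so that $\chi_\mb \equiv 1$ on them and Lemma~\ref{lem:bb_prop} applies, and that after $N$ rounds the positive-frequency data on both components of $\partial\Omega$ has been driven into $\{4M < x_1 \le L\}$ where $\chi_{\mathrm R}\equiv 1$. This is exactly the monotonicity of the chess billiard dynamics provided by $\lambda$-subcriticality, and is the reason the scales $M$ and $L$ were fixed as in \eqref{eq:bb1}--\eqref{eq:bb3}; all of the analytic content has already been packaged into Lemma~\ref{lem:bb_prop}.
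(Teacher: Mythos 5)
Your proof is correct and follows the same strategy as the paper: split into $\widetilde\Pi_+$, $\widetilde\Pi_-$, and bounded-frequency parts; iterate the black-box propagation estimate of Lemma~\ref{lem:bb_prop} (two applications give one step of the chess billiard map), using the scale separation of \eqref{eq:bb1}--\eqref{eq:bb3} and the monotonicity of $b$ in the subcritical regime to land the iterated cutoffs inside $\{\chi_{\mathrm R}\equiv 1\}$ (resp.\ $\{\chi_{\mathrm L}\equiv 1\}$) after $N$ rounds. The only differences from the paper's write-up are expository: you dispose of the bounded-frequency piece $(\Id-\widetilde\Pi_+-\widetilde\Pi_-)\chi_{\mathrm{int}} v_\omega$ explicitly via \eqref{residualbound}, and you explicitly mention the residual commutator needed to replace the final auxiliary cutoff by $\chi_{\mathrm R}$ — both steps the paper leaves implicit in its remark that ``summing the positive and negative frequencies yields the desired result.'' One minor bookkeeping difference: the paper tracks $\|\widetilde\chi_\mb g_\omega\|_{H^s}$ through the iteration and converts to $\|f\|_{H^{s+1}}$ at the end via Lemma~\ref{lem:gw_bound}, whereas you carry $\|f\|_{H^{s+1}}$ directly from Lemma~\ref{lem:bb_prop}; since the number of iterations is fixed, these are equivalent.
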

\begin{proof}
    We first focus on the positive frequencies on the top piece of boundary, and consider the cutoffs $\chi_\bullet$ as elements of $\CIc(\partial\Omega_\uparrow)$. Then applying Lemma~\ref{lem:bb_prop} twice, we find that for any $\widetilde \chi_{\mathrm{int}} \in \CIc(\partial \Omega_\uparrow)$ such that $\widetilde \chi_{\mathrm{int}} \equiv 1$ on $\supp \chi_{\mathrm{int}}$, we have
    \begin{equation}\label{eq:twice}
        \|\widetilde \Pi_+ \chi_{\mathrm{int}} v_\omega\|_{H^s(\partial \Omega_{\uparrow})} \lesssim \|\widetilde \Pi_+(\widetilde \chi_{\mathrm{int}} \circ b^{-1}) v_\omega \|_{H^s(\partial \Omega_{\uparrow})} + \|\widetilde \chi_\mb u_\omega\|_{\bar H^{2}(\Omega)} +  \|\widetilde \chi_\mb g_\omega\|_{H^{s}(\partial \Omega)}.
    \end{equation}
    Let $N > 0$ be so that~\eqref{eq:bb2} holds. Then it follows that 
    \[\supp (\chi_{\mathrm{int}} \circ b^{-N}) \subset \{ \chi_{\mathrm R}=1 \}\cap \partial\Omega_{\uparrow}.\]
    Therefore, we can arrange $\widetilde \chi_{\mathrm{int}}$ in \eqref{eq:twice} so that $\widetilde \chi_{\mathrm{int}} \circ b^{-N} \le \chi_{\mathrm{R}}$ on $\partial\Omega_{\uparrow}$. Then iterating~\eqref{eq:twice} $N$-times and using Lemma~\ref{lem:gw_bound}, we conclude that 
    \begin{equation*}
        \|\widetilde \Pi_+ \chi_{\mathrm{int}} v_\omega\|_{H^s(\partial \Omega_{\uparrow})} \le C\big(\|\widetilde \Pi_+ \chi_{\mathrm{R}} v_\omega \|_{H^s(\partial \Omega_{\uparrow})} + \|\widetilde \chi_\mb u_\omega\|_{\bar H^{2}(\Omega)} +  \|f\|_{H^{s + 1}(\Omega)}\big).
    \end{equation*}
    The case for positive frequencies on $\partial \Omega_\downarrow$ also follows similarly from iterating Lemma~\ref{lem:bb_prop} with minor sign changes. For negative frequencies, one can iterate the $\widetilde \Pi_-$ sign in Lemma~\ref{lem:bb_prop}. Summing the positive and negative frequencies yields the desired result.
\end{proof}

For higher regularity of the spectral measure, we also need analogous estimates for $\partial_\omega v_\omega$. We have the following:
\begin{lemma}\label{lem:interior_prop_diff}
Let $\chi_\bullet$ be the same cutoffs as in Lemma~\ref{lem:interior_prop}. Then
    \begin{equation*}\begin{split}
        \|\chi_{\mathrm{int}} \partial_\omega v_\omega \|_{H^s(\partial \Omega)} 
        \le C \big( & \|\widetilde \Pi_+ \chi_{\mathrm{R}} \partial_\omega v_\omega \|_{H^s(\partial \Omega)} + \|\widetilde \Pi_- \chi_{\mathrm{L}} \partial_{\omega} v_\omega \|_{H^s(\partial \Omega)} \\
        & + \|\widetilde \Pi_+ \chi_{\mathrm{R}} v_\omega \|_{H^{s + 1}(\partial \Omega)}
        + \|\widetilde \Pi_- \chi_{\mathrm{L}} v_\omega \|_{H^{s + 1}(\partial \Omega)} \\
        &+ \|\widetilde \chi_\mb u_\omega\|_{\bar H^{2}(\Omega)} + \|\widetilde \chi_\mb \partial_\omega u_\omega\|_{\bar H^{2}(\Omega)} + \|f\|_{H^{s + 2}(\Omega)}\big).
    \end{split}\end{equation*}
\end{lemma}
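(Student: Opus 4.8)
The plan is to run the proof of Lemma~\ref{lem:interior_prop} with $v_\omega$ replaced by $\partial_\omega v_\omega$, feeding in the differentiated estimates established in \S\ref{sec:end}--\S\ref{sec:reduction}. The starting point is to differentiate the boundary identity~\eqref{eq:bdr} in the spectral parameter,
\[
\partial_\omega r_\omega + \partial_\omega g_\omega = \md \mathcal C_\omega(\chi_\mb \partial_\omega v_\omega) + (\partial_\omega \md \mathcal C_\omega)(\chi_\mb v_\omega),
\]
move the last term to the left, and apply $\widetilde\Pi_\pm \chi$. Repeating verbatim the microlocal cohomological manipulation that produced~\eqref{eq:pdC} and~\eqref{eq:cohom}, now for $\partial_\omega v_\omega$, one obtains the same structure as in Lemma~\ref{lem:bb_prop} together with three new source terms: $\widetilde\Pi_\pm \chi\, \partial_\omega r_\omega$, controlled by $\|\widetilde\chi_\mb u_\omega\|_{\bar H^2(\Omega)} + \|\widetilde\chi_\mb \partial_\omega u_\omega\|_{\bar H^2(\Omega)}$ via the second estimate of Lemma~\ref{lem:rw_est} and~\eqref{residualbound}; $\widetilde\Pi_\pm \chi\, \partial_\omega g_\omega$, controlled by $\|f\|_{H^{s+2}(\Omega)}$ via the second estimate of Lemma~\ref{lem:gw_bound}; and $\widetilde\Pi_\pm \chi (\partial_\omega \md \mathcal C_\omega)(\chi_\mb v_\omega)$, controlled by $\|\widetilde\chi_\mb v_\omega\|_{H^{s+1}(\partial\Omega)}$ modulo a $\Psi_{\mathrm{res}}$ contribution absorbed by~\eqref{residualbound}, using Corollary~\ref{cor:diffC} (equivalently, the result of differentiating the case-by-case kernel computation of \S\ref{sec:kernel} using~\eqref{eq:dE}).

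This yields a differentiated black-box propagation estimate of exactly the shape of Lemma~\ref{lem:bb_prop}: for $\chi = (\chi_\uparrow,\chi_\downarrow)\in\CIc(\partial\Omega)$ with $\supp\chi\subset\{|x_1|<L\}$ and $\widetilde\chi_\bullet$ equal to $1$ on $\supp\chi_\bullet$,
\begin{align*}
\|\widetilde\Pi_\pm \chi\, \partial_\omega v_\omega\|_{H^s(\partial\Omega)} \le C\big(
& \|\widetilde\Pi_\pm(\widetilde\chi_\uparrow\circ\gamma^\pm)\partial_\omega v_\omega\|_{H^s(\partial\Omega_\downarrow)}
+ \|\widetilde\Pi_\pm(\widetilde\chi_\downarrow\circ\gamma^\mp)\partial_\omega v_\omega\|_{H^s(\partial\Omega_\uparrow)} \\
& + \|\widetilde\chi_\mb v_\omega\|_{H^{s+1}(\partial\Omega)}
+ \|\widetilde\chi_\mb u_\omega\|_{\bar H^2(\Omega)}
+ \|\widetilde\chi_\mb\partial_\omega u_\omega\|_{\bar H^2(\Omega)}
+ \|f\|_{H^{s+2}(\Omega)}\big),
\end{align*}
with the negative-frequency and opposite-boundary cases following with the usual sign changes.

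Then I would iterate this estimate $N$ times, with $N$ as in~\eqref{eq:bb2}, exactly as in the proof of Lemma~\ref{lem:interior_prop}: each application pushes the $\partial_\omega v_\omega$ term forward along the return map $b$, and after $N$ steps the relevant cutoffs are dominated by $\chi_{\mathrm{R}}$ on $\partial\Omega_\uparrow$ (resp.\ $\chi_{\mathrm{L}}$ for the negative-frequency piece), so the iterated term becomes $\|\widetilde\Pi_+\chi_{\mathrm{R}}\partial_\omega v_\omega\|_{H^s} + \|\widetilde\Pi_-\chi_{\mathrm{L}}\partial_\omega v_\omega\|_{H^s}$; the bulk terms and $\|f\|_{H^{s+2}}$ accumulate only a bounded number of times. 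The genuinely new feature is the non-differentiated remainder $\|\widetilde\chi_\mb v_\omega\|_{H^{s+1}}$ produced at each step: since $\widetilde\chi_\mb$ is compactly supported, it is controlled by invoking Lemma~\ref{lem:interior_prop} and Lemma~\ref{lem:bb_prop} with $s$ replaced by $s+1$, which contributes precisely $\|\widetilde\Pi_+\chi_{\mathrm{R}}v_\omega\|_{H^{s+1}} + \|\widetilde\Pi_-\chi_{\mathrm{L}}v_\omega\|_{H^{s+1}}$ together with $\|\widetilde\chi_\mb u_\omega\|_{\bar H^2}$ and $\|f\|_{H^{s+2}}$. Summing the $\widetilde\Pi_+$ and $\widetilde\Pi_-$ contributions and using $\chi_{\mathrm{int}} = \widetilde\Pi_+\chi_{\mathrm{int}} + \widetilde\Pi_-\chi_{\mathrm{int}} + (\text{residual})$ then gives the claim.

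The main obstacle I anticipate is the bookkeeping around the term $\widetilde\Pi_\pm \chi(\partial_\omega\md\mathcal C_\omega)(\chi_\mb v_\omega)$: one must confirm that differentiating in $\omega$ preserves the frequency-localized ``identity $+$ smoothing pull-back $+$ residual'' structure of~\eqref{eq:pdC} (so that the iteration still terminates after $N$ steps) while losing exactly one order of regularity. That loss of one order is precisely what forces the $H^{s+1}$ norm of $v_\omega$ and the $H^{s+2}$ norm of $f$ in the statement, and, downstream, the worsened weight threshold $\beta<-3/2$ in Proposition~\ref{prop:low_decay_deriv}. Verifying this structure amounts to applying $\partial_\omega$ to the four-case kernel analysis of \S\ref{sec:kernel} via~\eqref{eq:dE}; once that is in hand, the remainder of the argument is a routine transcription of the undifferentiated proofs.
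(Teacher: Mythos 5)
Your proposal matches the paper's proof in all essentials: differentiate \eqref{eq:bdr}, carry the extra term $(\partial_\omega\md\mathcal C_\omega)(\chi_\mb v_\omega)$ through the cohomological argument of Lemma~\ref{lem:bb_prop}, bound it using Corollary~\ref{cor:diffC} (with the far-away part treated as a $\Psi^{-\infty,-\infty}$ contribution) at the cost of one order on $v_\omega$, feed in the differentiated estimates of Lemmas~\ref{lem:rw_est} and~\ref{lem:gw_bound}, invoke Lemma~\ref{lem:interior_prop} at order $s+1$ to close the new $v_\omega$ term, and iterate $N$ times. The only cosmetic difference is that the paper replaces your $\|\widetilde\chi_\mb v_\omega\|_{H^{s+1}}$ with the more tightly supported $\|(\widetilde\chi\circ\gamma^\mp)v_\omega\|_{H^{s+1}}$ (choosing $\widetilde\chi$ to satisfy \eqref{eq:chi_supp}, so that the pullbacks remain inside $\{|x_1|\le L\}$), which lets Lemma~\ref{lem:interior_prop} be applied directly rather than having to account separately for $v_\omega$ in the transition region $L\lesssim|x_1|\lesssim 2L$.
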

\begin{proof}

Differentiating~\eqref{eq:bdr} in $\omega$, we find that
\begin{equation}\label{eq:diff_bdr}
    \md \mathcal C_\omega (\chi_\mb \partial_\omega v_\omega) = \partial_\omega g_\omega + \partial_\omega r_\omega - (\partial_\omega \md \mathcal C_\omega)(\chi_\mb v_\omega).
\end{equation}
Note that the equation~\eqref{eq:diff_bdr} for $\partial_\omega v_\omega$ only differs from the equation~\eqref{eq:bdr} for $v_\omega$ on the right-hand-side, and an analogous computation to~\eqref{eq:cohom} yields
\begin{multline}\label{eq:cohom_diff}
    \widetilde \Pi_\pm \chi \big[\partial_\omega g_\omega + \partial_\omega r_\omega - (\partial_\omega \md \mathcal C_\omega)(\chi_\mb v_\omega)\big] \\
    = \mp 2 \pi i \kappa_\omega \chi \widetilde \Pi_\pm \chi_\mb \partial_\omega v_\omega + \begin{pmatrix} 0 & \chi B^{\pm, \downarrow}_\omega (\gamma^\pm)^* \\ \chi B^{\pm, \uparrow}_\omega (\gamma^\mp)^* & 0\end{pmatrix} \partial_\omega \widetilde \Pi_\pm \chi_\mb v_\omega + \mathcal R_\omega \chi_\mb \partial_\omega v_\omega
\end{multline}
for any $\chi = (\chi_\uparrow, \chi_\downarrow) \in \CIc(\partial \Omega)$. Further assume that
\begin{equation}\label{eq:chi_supp}
    \supp \chi \subset [-L, L], \qquad \supp (\chi \circ \gamma^\pm) \subset [-L, L],
\end{equation}
and let $\widetilde \chi_\bullet$ be smooth and compactly supported cutoffs such that $\widetilde \chi_\bullet = 1$ on $\supp \chi_\bullet$. Then taking the $H^s$-norm on both sides of \eqref{eq:cohom_diff} and proceeding as in Lemma~\ref{lem:bb_prop}, we find 
\begin{equation}\begin{split}\label{eq:bb_prop_diff1}
    \|\widetilde \Pi_\pm \chi \partial_\omega v_\omega\|_{H^s(\partial \Omega)} 
    \le C \big( & \|\widetilde \Pi_\pm (\widetilde \chi_\uparrow \circ \gamma^\pm) \partial_\omega v_\omega \|_{H^s(\partial \Omega_\downarrow)} + \|\widetilde \Pi_\pm (\widetilde \chi_\downarrow \circ \gamma^\mp) v_\omega \|_{H^s(\partial \Omega_\uparrow)} \\
    & + \|\widetilde \chi_\mb u_\omega \|_{\bar H^2(\Omega)} + \|\widetilde \chi_\mb \partial_\omega u_\omega \|_{\bar H^2(\Omega)} \\
    & + \|f\|_{H^{s + 2}(\Omega)} + \|\widetilde \Pi_\pm \chi (\partial_\omega \md \mathcal C_\omega)(\chi_\mb v_\omega)\|_{H^s(\partial \Omega)} \big),
\end{split}\end{equation}
where we used the differentiated estimates in Lemmas~\ref{lem:gw_bound} and~\ref{lem:rw_est} to control the $\partial_\omega g_\omega$ and $\partial_\omega r_\omega$ terms. Now we control the last term on the right-hand-side of~\eqref{eq:bb_prop_diff1}.
We may choose $\widetilde \chi$ to also satisfy~\eqref{eq:chi_supp}. Then
\[\begin{split} 
& \|\widetilde \Pi_\pm \chi_\uparrow (\partial_\omega \md \mathcal C_\omega)(\chi_\mb v_\omega)\|_{H^s(\partial \Omega)} \\
    & \ \le \|\widetilde \Pi_\pm \chi_{\uparrow} (\partial_\omega \md \mathcal C_\omega)((\widetilde \chi \circ \gamma^\mp) v_\omega)\|_{H^s(\partial \Omega)}
    + \|\widetilde \Pi_\pm \chi_\uparrow (\partial_\omega \md \mathcal C_\omega)((\chi_\mb  -\widetilde \chi \circ \gamma^\mp) v_\omega)\|_{H^s(\partial \Omega)}. 
\end{split}\]
We use Corollary~\ref{cor:diffC} to control the first term on the right-hand-side 
\[ \|\widetilde \Pi_{\pm} \chi_{\uparrow}(\partial_\omega \md \mathcal C_\omega)((\widetilde \chi \circ \gamma^{\mp}) v_\omega) \|_{H^s(\partial\Omega)} \leq C\| (\widetilde \chi \circ \gamma^{\mp}) v_\omega \|_{H^{s+1}(\partial\Omega)}. \]
For the second term on the right-hand-side, notice that we can choose $\widetilde \chi$ such that $\supp \chi_{\uparrow}$ does not intersect with any of $\supp (\chi_\mb - \widetilde \chi \circ \gamma^\mp)$, $\gamma^{\pm}(\supp (\chi_\mathrm b - \widetilde \chi \circ \gamma^\mp))$. Hence by Proposition \ref{prop:kernel}, 
\[\widetilde \Pi_\pm \chi_{\uparrow}(\partial_\omega \md \mathcal C_\omega)(\chi_\mb - \widetilde \chi\circ \gamma^{\pm})\in \Psi^{-\infty, -\infty}(\partial\Omega).\] 
As a result, also using the fact that $v_\omega$ is the Neumann data of $u_\omega$,
\[ \|\widetilde \Pi_\pm \chi_\uparrow (\partial_\omega \md \mathcal C_\omega)((\chi_\mb  -\widetilde \chi \circ \gamma^\mp) v_\omega)\|_{H^s(\partial \Omega)} \leq C\|\widetilde \chi_\mb v_\omega\|_{H^{-N}(\partial\Omega)}\leq C\|\widetilde \chi_\mb u_\omega \|_{\bar H^2(\Omega)}. \]
Here we regard $\widetilde \chi_\mb$ as cutoff functions on both $\partial\Omega$ and $\Omega$.
Combining estimates for both terms on the right-hand-side, we find
\begin{equation*}\begin{split}
    & \|\widetilde \Pi_\pm \chi_\uparrow (\partial_\omega \md \mathcal C_\omega)(\chi_\mb v_\omega)\|_{H^s(\partial \Omega)} \\
    & \ \lesssim \|(\widetilde \chi \circ \gamma^\mp) v_\omega\|_{H^{s + 1}(\partial \Omega)} + \|\widetilde \chi_\mb u_\omega \|_{\bar H^{2}(\Omega)} \\
    & \ \lesssim \|\widetilde \Pi_+ \chi_{\mathrm{R}} v_\omega\|_{H^{s + 1}(\partial \Omega)} + \|\widetilde \Pi_- \chi_{\mathrm{L}} v_\omega\|_{H^{s + 1}(\partial \Omega)} 
    + \|\widetilde \chi_\mb u_\omega\|_{\bar H^{2}(\Omega)} + \|f\|_{H^{s + 2}(\Omega)},
\end{split}\end{equation*}
where the last inequality is due to Lemma~\ref{lem:interior_prop}. 
We also have the identical inequality when $\chi_\uparrow$ is replaced by $\chi_\downarrow$. Then combining with~\eqref{eq:bb_prop_diff1}, we find that 
\begin{equation}\begin{split}\label{eq:bb_prop_diff2}
    \|\widetilde \Pi_\pm \chi \partial_\omega v_\omega\|_{H^s(\partial \Omega)} 
    \le C \big( & \|\widetilde \Pi_\pm (\widetilde \chi_\uparrow \circ \gamma^\pm) \partial_\omega v_\omega \|_{H^s(\partial \Omega)} + \|\widetilde \Pi_\pm (\widetilde \chi_\downarrow \circ \gamma^\mp) v_\omega \|_{H^{s}(\partial \Omega)} \\
    & + \|\widetilde \Pi_+ \chi_{\mathrm{R}} v_\omega\|_{H^{s + 1}(\partial \Omega)} + \|\widetilde \Pi_- \chi_{\mathrm{L}} v_\omega\|_{H^{s + 1}(\partial \Omega)}
    \\
    & + \|\widetilde \chi_\mb u_\omega \|_{\bar H^2(\Omega)} + \|\widetilde \chi_\mb \partial_\omega u_\omega \|_{\bar H^2(\Omega)} + \|f\|_{H^{s + 2}(\Omega)} \big)
\end{split}\end{equation}
Iterating the estimate~\eqref{eq:bb_prop_diff2} similar to Lemma~\ref{lem:interior_prop} yields the desired estimate. 
\end{proof}

Now assembling the propagation estimate with the end decay estimates, we conclude the desired semi-Fredholm estimate. 
\begin{proposition}\label{prop:SFE}
Assume that $\omega \in \mathcal J + i(0, \infty)$. Let $u_\omega \in H^1_0(\Omega)$ be the unique solution to~\eqref{eq:P(w)u}. Then for any $\beta < -1/2$, $s \in \R$, and $N \in \NN$,
    \[\|u_\omega\|_{\bar H^{s, \beta}(\Omega)} \le C\big(\|f\|_{H^{s}(\Omega)} + \|u_\omega\|_{H^{2, -N}(\Omega)}\big)\]
    uniformly in $\omega = \lambda + i \epsilon$ for all sufficiently small $\epsilon > 0$. 
\end{proposition}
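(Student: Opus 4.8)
The plan is to split $u_\omega$ into a piece supported in a fixed compact part of the channel and two pieces supported in the ends: the ends are handled directly by the low-frequency decay estimate, and the compact piece is extracted from the single layer representation and then controlled by the propagation estimate of Lemma~\ref{lem:interior_prop} together with the high-frequency end estimate of Proposition~\ref{prop:high_decay}.

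\emph{Step 1: reduction to a compactly supported estimate.} Fix a partition $1 = \chi_{\mathrm{mid}} + \chi_+ + \chi_-$ of unity on $\Omega$ with $\chi_\pm$ supported in $\{\pm x_1 > M+1\}$ and $\chi_{\mathrm{mid}}$ compactly supported, compatible with the cutoffs of Propositions~\ref{prop:high_decay} and~\ref{prop:low_decay}. Since the weight $\langle x_1\rangle^\beta$ is bounded on $\supp\chi_{\mathrm{mid}}$, we have $\|u_\omega\|_{\bar H^{s,\beta}(\Omega)} \lesssim \|\chi_{\mathrm{mid}} u_\omega\|_{\bar H^s(\Omega)} + \sum_\pm \|\chi_\pm u_\omega\|_{\bar H^{s,\beta}(\Omega)}$, and Proposition~\ref{prop:low_decay} bounds the two end terms by $\|\chi_0 u_\omega\|_{H^s(\Omega)}$ for a fixed compactly supported $\chi_0$. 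Thus it suffices to prove $\|\chi u_\omega\|_{\bar H^s(\Omega)} \lesssim \|f\|_{H^s(\Omega)} + \|u_\omega\|_{H^{2,-N}(\Omega)}$ for a single compactly supported cutoff $\chi$ equal to $1$ on $\supp\chi_{\mathrm{mid}}\cup\supp\chi_0$, whose support, by the scale choice~\eqref{eq:bb1}, is separated along the characteristic foliation both from $\{|x_1|\ge 2M\}$ and from $\{|x_1|\ge 2L\}$.

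\emph{Step 2: the single layer representation.} Where $\chi_\mb\equiv1$, identity~\eqref{eq:single_layer} reads $\chi u_\omega = \chi E_\omega*(\indic_\Omega[P(\omega),\chi_\mb]u_\omega) + \chi E_\omega*f - \chi S_\omega(\chi_\mb v_\omega)$. The commutator is supported in $\supp\chi_\mb'\subset\{|x_1|\ge2L\}$, which is characteristically separated from $\supp\chi$, so $E_\omega$ is uniformly smooth on the relevant difference set and this term is $\lesssim\|\widetilde\chi_\mb u_\omega\|_{L^2(\Omega)}\lesssim\|u_\omega\|_{H^{2,-N}(\Omega)}$ exactly as in Lemma~\ref{lem:rw_est}; the term $\chi E_\omega*f$ is $\lesssim\|f\|_{H^s(\Omega)}$ by the computation in Lemma~\ref{lem:gw_bound}. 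For the last term, split $\chi_\mb v_\omega = \chi_{\mathrm{int}}v_\omega + (\chi_\mb-\chi_{\mathrm{int}})v_\omega$ with $\chi_{\mathrm{int}}$ as in Lemma~\ref{lem:interior_prop}. The far piece $(\chi_\mb-\chi_{\mathrm{int}})v_\omega$ is supported in a compact subset of $\{|x_1|\ge2M\}$, again characteristically separated from $\supp\chi$, so the operator $w\mapsto\chi S_\omega((\chi_\mb-\chi_{\mathrm{int}})w)$ has a uniformly smooth compactly supported Schwartz kernel, whence its contribution is $\lesssim\|\widetilde\chi_\mb v_\omega\|_{H^{-N}(\partial\Omega)}\lesssim\|u_\omega\|_{H^{2,-N}(\Omega)}$. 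For the near piece, Lemma~\ref{lem:S_mapping} gives $\|\chi S_\omega(\chi_{\mathrm{int}}v_\omega)\|_{\bar H^s(\Omega)}\lesssim\|\chi_{\mathrm{int}}v_\omega\|_{H^{s-1}(\partial\Omega)}$, and Lemma~\ref{lem:interior_prop} (with $s$ replaced by $s-1$) bounds the latter by $\|\widetilde\Pi_+\chi_{\mathrm R}v_\omega\|_{H^{s-1}(\partial\Omega)}+\|\widetilde\Pi_-\chi_{\mathrm L}v_\omega\|_{H^{s-1}(\partial\Omega)}+\|\widetilde\chi_\mb u_\omega\|_{\bar H^2(\Omega)}+\|f\|_{H^s(\Omega)}$. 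Since $\chi_{\mathrm R}$ (resp.\ $\chi_{\mathrm L}$) is supported in the right (resp.\ left) end, it may be absorbed into the cutoff $\chi_+$ (resp.\ $\chi_-$) of Proposition~\ref{prop:high_decay} modulo a smoothing operator, and that proposition yields $\|\widetilde\Pi_\pm\chi_{\mathrm{R/L}}v_\omega\|_{H^{s-1}(\partial\Omega)}\lesssim\|\chi_0 u_\omega\|_{L^2(\Omega)}+\|\chi_\pm v_\omega\|_{H^{-N,-N}(\partial\Omega)}\lesssim\|u_\omega\|_{H^{2,-N}(\Omega)}$, using the weighted trace theorem. Collecting these estimates proves the reduced inequality, hence the proposition.

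\emph{The main obstacle} is to carry out Step 2 without circularity. The portion of $\chi_\mb v_\omega$ supported in the ends but inside the black box cannot be estimated uniformly in $\epsilon$ by $\|\chi_0 u_\omega\|_{H^s}$: that bound would reintroduce (a nontrivial multiple of) the left-hand side and, through the $\epsilon$-dependent exponential factors in the Fourier series of $v_\omega$, would degenerate as $\epsilon\downarrow0$. The resolution is the observation used above, that once composed with $S_\omega$ and localized by the interior cutoff $\chi$ this far part of the Neumann data contributes only a smoothing error; this is exactly what the scale separation~\eqref{eq:bb1} between $M$, $L$, and the characteristic slope $c_\lambda$ is designed to deliver. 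The remaining work is bookkeeping --- tracking which cutoffs live in which region and matching the frequency localizations $\widetilde\Pi_\pm$ of Lemma~\ref{lem:interior_prop} against those of Proposition~\ref{prop:high_decay} (positive frequencies controlled in the right end, negative in the left).
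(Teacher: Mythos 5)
Your proposal is correct and follows essentially the same route as the paper's own proof: decompose into interior and ends, control the ends by Proposition~\ref{prop:low_decay}, reconstruct the interior from the single layer representation~\eqref{eq:single_layer}, handle the commutator and forcing terms by the characteristic-separation argument of Lemmas~\ref{lem:rw_est} and~\ref{lem:gw_bound}, split $\chi_\mb v_\omega$ into a near piece controlled by Lemma~\ref{lem:S_mapping} and Lemma~\ref{lem:interior_prop} and a far piece which contributes a smoothing error, and finally absorb the frequency-localized end Neumann data into $\|u_\omega\|_{\bar H^{2,-N}}$ via Proposition~\ref{prop:high_decay}. The only cosmetic difference is the order of operations: you perform the low-frequency reduction to a compactly supported cutoff at the start, whereas the paper derives the compact estimate~\eqref{eq:fred_interior} first and then appends the end estimate~\eqref{eq:fred_ends}; the ingredients and the scale-separation argument via~\eqref{eq:bb1} are identical.
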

\begin{proof}
    1. We first control $u_\omega$ in the region $\{|x_1| \le M\}$ by $v_\omega$. Let $\chi \in \CIc(\R)$ be such that $\chi = 1$ on $[-M, M]$ and $\supp \chi \subset (-\frac{3}{2} M, \frac{3}{2} M)$. Recall $\chi_{\mathrm{int}} \in \CIc(\R_{x_1})$ defined in Lemma~\ref{lem:interior_prop}. Since $E_\omega$ is uniformly smooth in every compact set supported away from $\{x \in \R^2 \mid \ell^\pm(x, \lambda) = 0, \ \lambda \in \mathcal J\}$, it follows that for every $N \in \R$, 
    \[\|\chi S_\omega (\chi_\mb - \chi_{\mathrm{int}})v_\omega \|_{\bar H^s(\Omega)} = \|\chi E_\omega * \mathcal I \big((\chi_\mb - \chi_{\mathrm{int}}) v_\omega\big)\|_{\bar H^s(\Omega)} \lesssim \|\chi_b v_\omega\|_{H^{-N}(\partial \Omega)}.\]
    Then by Lemma~\ref{lem:S_mapping}, we have
    \begin{align*}
        \|\chi S_\omega \chi_\mb v_\omega\|_{\bar H^{s}(\Omega)} &\le \|\chi S_\omega \chi_{\mathrm{int}} v_\omega \|_{\bar H^{s}(\Omega)} + \|\chi S_\omega (\chi_\mb - \chi_{\mathrm{int}})v_\omega\|_{\bar H^{s}(\Omega)} \\
        &\lesssim \|\chi_{\mathrm{int}} v_\omega\|_{H^{s - 1}(\partial \Omega)} + \|\chi_\mb v_\omega\|_{H^{-N}(\partial \Omega)} \\
        &\lesssim \|\chi_{\mathrm{int}} v_\omega\|_{H^{s - 1}(\partial \Omega)} + \|u_\omega\|_{H^{2, -N}(\Omega)}.
    \end{align*}
    Then by~\eqref{eq:single_layer} and Lemma~\ref{lem:interior_prop},
    \begin{align*}
        \|\chi u_\omega \|_{\bar H^{s}} &\lesssim \|\chi_{\mathrm{int}} v_\omega\|_{H^{s - 1}(\partial \Omega)} + \|u_\omega\|_{H^{2, -N}(\Omega)} + \|f\|_{H^{s - 1}(\Omega)} \\
        &\lesssim \|\widetilde \Pi_+ \chi_{\mathrm{R}} v_\omega\|_{H^{s - 1}(\partial \Omega)} + \|\widetilde \Pi_- \chi_{\mathrm{L}} v_\omega\|_{H^{s - 1}(\partial \Omega)} + \|u_\omega\|_{\bar H^{2, -N}(\Omega)} + \|f\|_{H^{s}(\Omega)}.
    \end{align*}

    \noindent
    2. By the high frequency decay estimate of Proposition~\ref{prop:high_decay}, we then find that
    \begin{equation}\label{eq:fred_interior}
        \|\chi u_\omega\|_{\bar H^{s}(\Omega)} \lesssim \|u_\omega\|_{\bar H^{2, -N}(\Omega)} + \|f\|_{H^{s}(\Omega)}.
    \end{equation}
    Notice that the $\|\chi_\pm v_\omega\|_{H^{-N,-N}}$ terms in Proposition~\ref{prop:high_decay} are absorbed in $\|u_\omega\|_{\bar H^{2,-N}(\Omega)}$ here.
    Note that $1 - \chi$ cuts off to the left and right ends of the domain. Therefore, by the low frequency decay estimates of Proposition~\ref{prop:low_decay}, we see that for any $\beta < -1/2$, 
    \begin{equation}\label{eq:fred_ends}
        \|(1 - \chi)u_\omega\|_{\bar H^{s, \beta}(\Omega)} \lesssim \|\chi u_\omega\|_{H^{s}(\Omega)}.
    \end{equation}
    Then combining~\eqref{eq:fred_interior} and~\eqref{eq:fred_ends} yields the desired estimates. 
\end{proof}

Again, we need the analogue of Proposition~\ref{prop:SFE} with a derivative in the spectral parameter.
\begin{proposition}\label{prop:SFE_diff}
    Let $\omega$ and $u_\omega$ be the same as in Proposition~\ref{prop:SFE}. Then for any $\beta < -3/2$, $s \in \R$, and $N \in \NN$,
    \[\|\partial_\omega u_\omega \|_{\bar H^{s, \beta}(\Omega)} \le C \big(\|f\|_{H^{s + 1}(\Omega)} + \|u_\omega\|_{\bar H^{2, -N}(\Omega)} + \|\partial_\omega u_\omega\|_{\bar H^{2, -N}(\Omega)} \big).\]
\end{proposition}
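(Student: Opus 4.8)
The plan is to follow the proof of Proposition~\ref{prop:SFE} almost line for line, replacing each ingredient by its differentiated counterpart. Recall that $\partial_\omega u_\omega \in H^1_0(\Omega)$ is the unique solution of $P(\omega)\partial_\omega u_\omega = 2\omega\Delta u_\omega$, and that $u_\omega$, hence its Neumann data $v_\omega$, depends holomorphically on $\omega$ for $\epsilon>0$. We may therefore differentiate the single layer representation~\eqref{eq:single_layer} term by term, which by the product rule gives
\[\partial_\omega(\indic_\Omega \chi_\mb u_\omega) = \partial_\omega\big(E_\omega * (\indic_\Omega[P(\omega),\chi_\mb]u_\omega)\big) + \partial_\omega(E_\omega * f) - (\partial_\omega S_\omega)(\chi_\mb v_\omega) - S_\omega(\chi_\mb \partial_\omega v_\omega).\]

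First I would control $\partial_\omega u_\omega$ on a fixed compact set containing $\{|x_1|\le M\}$; pick $\chi\in\CIc(\R)$ with $\chi\equiv 1$ on $[-M,M]$. The terms in which $E_\omega$ or $\partial_\omega E_\omega$ is convolved against the commutator or the forcing are treated exactly as $\partial_\omega r_\omega$ and $\partial_\omega g_\omega$ were in Lemmas~\ref{lem:rw_est} and~\ref{lem:gw_bound}, using that $E_\omega$ and $\partial_\omega E_\omega$ are uniformly smooth away from $\{\ell^\pm(\cdot,\lambda)=0\}$; from the compact region they contribute at most $C(\|\widetilde\chi_\mb u_\omega\|_{\bar H^2(\Omega)} + \|\widetilde\chi_\mb\partial_\omega u_\omega\|_{\bar H^2(\Omega)} + \|f\|_{H^{s+1}(\Omega)})$ to $\|\chi\partial_\omega u_\omega\|_{\bar H^s(\Omega)}$. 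For the two single layer terms I split $\chi_\mb v_\omega = \chi_{\mathrm{int}}v_\omega + (\chi_\mb - \chi_{\mathrm{int}})v_\omega$, and likewise for $\partial_\omega v_\omega$; the pieces carrying $\chi_\mb - \chi_{\mathrm{int}}$ see only the uniformly smooth part of the relevant kernels as viewed from the compact region (compare Figure~\ref{fig:E_singularity}) and are harmless, while for $\chi_{\mathrm{int}}v_\omega$ and $\chi_{\mathrm{int}}\partial_\omega v_\omega$ I invoke Lemma~\ref{lem:S_mapping}, which supplies $\partial_\omega S_\omega: H^s_{\comp}\to\bar H^s_{\loc}$ and $S_\omega: H^{s-1}_{\comp}\to\bar H^s_{\loc}$. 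Applying Lemma~\ref{lem:interior_prop} at order $s$ and Lemma~\ref{lem:interior_prop_diff} at order $s-1$ then bounds $\|\chi\partial_\omega u_\omega\|_{\bar H^s(\Omega)}$ by the $H^{s-1}(\partial\Omega)$-norms of $\widetilde\Pi_+\chi_{\mathrm{R}}\partial_\omega v_\omega$ and $\widetilde\Pi_-\chi_{\mathrm{L}}\partial_\omega v_\omega$, the $H^s(\partial\Omega)$-norms of $\widetilde\Pi_+\chi_{\mathrm{R}}v_\omega$ and $\widetilde\Pi_-\chi_{\mathrm{L}}v_\omega$, plus $\|u_\omega\|_{\bar H^{2,-N}(\Omega)} + \|\partial_\omega u_\omega\|_{\bar H^{2,-N}(\Omega)} + \|f\|_{H^{s+1}(\Omega)}$, where $\chi_{\mathrm{R}},\chi_{\mathrm{L}}$ are the end cutoffs of Lemma~\ref{lem:interior_prop}.

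Next I would absorb the boundary frequency content at the ends. The differentiated high frequency decay estimate of Proposition~\ref{prop:high_decay_deriv} bounds $\|\widetilde\Pi_+\chi_{\mathrm{R}}\partial_\omega v_\omega\|_{H^{s-1}}$ and $\|\widetilde\Pi_-\chi_{\mathrm{L}}\partial_\omega v_\omega\|_{H^{s-1}}$ by $\|u_\omega\|_{\bar H^{2,-N}(\Omega)} + \|\partial_\omega u_\omega\|_{\bar H^{2,-N}(\Omega)} + \|\chi_\pm v_\omega\|_{H^{-N,-N}(\R)}$, and Proposition~\ref{prop:high_decay} bounds the corresponding norms of $v_\omega$ by $\|\chi_0 u_\omega\|_{L^2(\Omega)} + \|\chi_\pm v_\omega\|_{H^{-N,-N}(\R)}$; since $v_\omega$ is the Neumann data of $u_\omega$, all of these are in turn dominated by $\|u_\omega\|_{\bar H^{2,-N}(\Omega)} + \|\partial_\omega u_\omega\|_{\bar H^{2,-N}(\Omega)}$. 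This yields
\[\|\chi\partial_\omega u_\omega\|_{\bar H^s(\Omega)} \le C\big(\|u_\omega\|_{\bar H^{2,-N}(\Omega)} + \|\partial_\omega u_\omega\|_{\bar H^{2,-N}(\Omega)} + \|f\|_{H^{s+1}(\Omega)}\big)\]
at every regularity level $s$. Finally, $1-\chi$ is supported in the two ends, so the differentiated low frequency decay estimate of Proposition~\ref{prop:low_decay_deriv} — the step where the weight threshold weakens from $\beta<-1/2$ to $\beta<-3/2$ — controls $\|(1-\chi)\partial_\omega u_\omega\|_{\bar H^{s,\beta}(\Omega)}$ by $\chi_0$-localized norms of $\partial_\omega u_\omega$ and of $u_\omega$; these are bounded by the previous display (used at the appropriate orders) together with the undifferentiated estimate Proposition~\ref{prop:SFE}. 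Adding the estimates on the two regions gives the proposition.

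I expect the main difficulty to be the regularity bookkeeping rather than any new geometric input. Differentiation in $\omega$ costs one derivative in three distinct places — the source term $2\omega\Delta u_\omega$, the restricted single layer operator through $\partial_\omega\md\mathcal C_\omega$ (Corollary~\ref{cor:diffC}), and the fundamental solution through $\partial_\omega E_\omega$ — and the end estimate Proposition~\ref{prop:low_decay_deriv} demands an extra derivative as well. One has to verify that these shifts are mutually consistent, so that only $\|f\|_{H^{s+1}(\Omega)}$ — and never $\|f\|_{H^{s+2}(\Omega)}$ — appears on the right: the gain of one derivative in $S_\omega$ (Lemma~\ref{lem:S_mapping}) offsets the derivative loss in passing from $v_\omega$ to $\partial_\omega v_\omega$, and the extra derivative demanded by the end estimate is absorbed by running the interior estimates at the appropriate orders and invoking Proposition~\ref{prop:SFE} for the undifferentiated contribution. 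Threading these orders consistently through the propagation iterations is the only genuinely delicate point.
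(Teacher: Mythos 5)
Your proof is correct and follows essentially the same strategy as the paper's: differentiate the single-layer representation~\eqref{eq:single_layer} in $\omega$, use Lemma~\ref{lem:S_mapping} on the compact region (with $S_\omega: H^{s-1}_{\comp}\to\bar H^s_{\loc}$ for $\partial_\omega v_\omega$ and $\partial_\omega S_\omega: H^s_{\comp}\to\bar H^s_{\loc}$ for $v_\omega$), feed into Lemma~\ref{lem:interior_prop_diff} at order $s-1$ and Lemma~\ref{lem:interior_prop} at order $s$, close the boundary terms with Propositions~\ref{prop:high_decay} and~\ref{prop:high_decay_deriv}, and finish in the ends with Proposition~\ref{prop:low_decay_deriv}. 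You also correctly flagged the one point the paper leaves implicit, namely that the $\|\chi_0 u_\omega\|_{H^{s+1}}$ contribution from the end estimate is absorbed by running the undifferentiated interior estimate (Proposition~\ref{prop:SFE}) at the higher order, so that only $\|f\|_{H^{s+1}}$ ever appears on the right-hand side.
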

\begin{proof}
    1. We first recover $\partial_\omega u_\omega$ away from the ends using $v_\omega$ away from the ends by using~\eqref{eq:single_layer}: differentiating this equation in $\omega$ yields
    \[\chi_\mb \partial_\omega u_\omega = \partial_\omega(E_\omega * (\indic_\omega[P(\omega), \chi_\mb] u_\omega)) + \partial_\omega E_\omega * f - (\partial_\omega S_\omega) \chi_\mb v_\omega + S_\omega \chi_\mb (\partial_\omega v_\omega).\]
    As in to Step 1 of Proposition~\ref{prop:SFE}, we let $\chi \in \CIc(\R)$ be such that $\chi = 1$ on $[-M, M]$ and $\supp \chi \subset (-\frac{3}{2} M, \frac{3}{2} M)$. Then using the mapping properties of Lemma~\ref{lem:S_mapping}, we find that
    \begin{equation*}\begin{split}
        \|\chi \partial_\omega u_\omega \|_{\bar H^s(\Omega)} \lesssim & \|\chi_{\mathrm{int}}\partial_\omega v_\omega\|_{H^{s - 1}(\partial \Omega)} + \|\chi_{\mathrm{int}} v_\omega \|_{H^{s}(\partial \Omega)} \\
        & + \|\partial_\omega u_\omega \|_{\bar H^{2, -N}(\Omega)} + \|u_\omega \|_{\bar H^{2, -N}(\Omega)} + \|f\|_{H^{s}(\Omega)}.
    \end{split}\end{equation*}
    Then applying Lemmas~\ref{lem:interior_prop} and~\ref{lem:interior_prop_diff}, we have
    \begin{equation*}\begin{split}
        \|\chi \partial_\omega u_\omega \|_{\bar H^s(\Omega)} \lesssim & \|\widetilde \Pi_+ \chi_{\mathrm{R}} \partial_\omega v_\omega \|_{H^{s - 1}(\partial \Omega)} + \|\widetilde \Pi_- \chi_{\mathrm{L}} \partial_{\omega} v_\omega \|_{H^{s - 1}(\partial \Omega)} \\
        & + \|\widetilde \Pi_+ \chi_{\mathrm{R}} v_\omega \|_{H^{s}(\partial \Omega)} + \|\widetilde \Pi_- \chi_{\mathrm{L}} v_\omega \|_{H^{s}(\partial \Omega)} \\
        & + \|u_\omega\|_{\bar H^{2, -N}(\Omega)} + \|\partial_\omega u_\omega\|_{\bar H^{2, -N}(\Omega)} + \|f\|_{H^{s+1}(\Omega)}.
    \end{split}\end{equation*}
 Using the high frequency decay estimates of Proposition~\ref{prop:high_decay} and~\ref{prop:high_decay_deriv} gives
    \[\|\chi \partial_\omega u_\omega \|_{H^s(\Omega)} \lesssim \|\partial_\omega u_\omega \|_{\bar H^{2, -N}(\Omega)} + \|u_\omega \|_{\bar H^{2, -N}(\Omega)} + \|f\|_{H^{s + 1}(\Omega)}.\]
    Finally, applying the low frequency decay estimates of Proposition~\ref{prop:low_decay_deriv} to $(1 - \chi) \partial_\omega u_\omega$ in the ends, we conclude that 
    \[\|\partial_\omega u_\omega \|_{\bar H^s(\Omega)} \lesssim \|f\|_{H^{s+ 1}(\Omega)} + \|u_\omega\|_{\bar H^{2, -N}(\Omega)} + \|\partial_\omega u_\omega\|_{\bar H^{2, -N}(\Omega)}\]
    as desired. 
\end{proof}

\begin{Remark}
    For Propositions~\ref{prop:SFE} and~\ref{prop:SFE_diff}, the Sobolev order needed on the right-hand-side for $f$ is not optimal. An improvement of one order can be obtained if one instead uses a sharper version of Lemma~\ref{lem:gw_bound}. However, it is simpler to assume $f \in \CIc(\Omega)$ so that the one extra derivative needed on $f$ does not ultimately affect the evolution problem. 
\end{Remark}

\section{Spectral and evolution problem}\label{sec:LAP}
With the global high frequency estimates of Proposition~\ref{prop:SFE} in place, we now use the uniqueness result in~\cite{LiWaWu:24} to establish the desired limiting absorption principle. In turn, the limiting absorption principle allows us to characterize the spectral measure near real values of the spectral parameter $\lambda$ such that $\Omega$ is $\lambda$-subcritical. This gives us a precise description of the long-time behavior of the evolution problem. As before, $\mathcal J \subset [0, 1]$ denotes an open interval such that $\Omega$ is $\lambda$-subcritical with respect to every $\lambda \in \mathcal J$.

\subsection{Limiting absorption principle}
Recall that \emph{incoming} and \emph{outgoing} solutions to the stationary equation are defined in Definition~\ref{def:io}
according to the Fourier modes present in their Neumann data in the ends.

\begin{proposition}\label{prop:LAP}
    Assume that $\omega_j \to \lambda \in \mathcal J$, $\Im \omega_j > 0$. Then for all $\beta < -1/2$ and $s \ge 1$, 
    \[u_{\omega_j} \to u_{\lambda + i0} \quad \text{in} \quad \bar H^{s, \beta}_0(\Omega) \quad \text{as} \quad j \to \infty\]
    where $u_{\lambda + i0}$ is the unique incoming solution to the stationary internal waves equation~\eqref{eq:SIW}. 
\end{proposition}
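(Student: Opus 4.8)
The plan is to run a limiting absorption argument in the standard way: the semi-Fredholm estimate of Proposition~\ref{prop:SFE} reduces the whole statement to a single uniform bound on $u_{\omega_j}$ in a weak norm, and the only obstruction to that bound is removed by the uniqueness statement of Proposition~\ref{prop:uniqueness} (equivalently, the uniqueness theorem of \cite{LiWaWu:24}). I treat only the incoming case $\Im\omega_j>0$; the outgoing case is identical up to sign conventions. Since $\bar H^{2,\beta}(\Omega)\hookrightarrow\bar H^{1,\beta}(\Omega)$ continuously, it suffices to prove the convergence for $s\ge 2$, the case $s=1$ following from this embedding.

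\emph{Step 1: uniform bounds.} Fix $N$ large. I would first show $\sup_j\|u_{\omega_j}\|_{\bar H^{2,-N}(\Omega)}<\infty$ by contradiction. If $m_j:=\|u_{\omega_j}\|_{\bar H^{2,-N}(\Omega)}\to\infty$ along a subsequence, put $w_j:=u_{\omega_j}/m_j$, so $\|w_j\|_{\bar H^{2,-N}}=1$ and $P(\omega_j)w_j=m_j^{-1}f\to 0$ in $\CIc(\Omega)$. Applying Proposition~\ref{prop:SFE} (with Sobolev order $3$ and a fixed weight $\beta\in(-1,-\tfrac12)$) bounds $(w_j)$ in $\bar H^{3,\beta}(\Omega)$, so by the compact embedding $\bar H^{3,\beta}\hookrightarrow\bar H^{2,-N}$ a further subsequence converges in $\bar H^{2,-N}(\Omega)$ to some $w$ with $\|w\|_{\bar H^{2,-N}}=1$; in particular $w\neq 0$. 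Passing to the limit in the equation — the coefficients $1-\omega_j^2,\,-\omega_j^2$ converge to $1-\lambda^2,\,-\lambda^2$ and $w_j\to w$ in $\bar H^2_{\loc}(\Omega)$ — gives $P(\lambda)w=0$, and $w\in H^2_{\loc}(\Omega)\cap H^1_{0,\loc}(\Omega)$ since each $w_j\in H^1_0(\Omega)$ and the boundary trace is continuous on compacts. The decisive point is that $w$ must be incoming: applying Proposition~\ref{prop:high_decay} to $w_j$ with $s=0$, $\alpha=0$ and using $\|w_j\|_{\bar H^{2,-N}}\lesssim 1$, one gets $\|\widetilde\Pi_+\chi_+v_{w_j}\|_{L^2(\partial\Omega)}+\|\widetilde\Pi_-\chi_-v_{w_j}\|_{L^2(\partial\Omega)}\le C$ uniformly in $j$, where $v_{w_j}$ is the Neumann data of $w_j$; since $v_{w_j}\to v_w$ locally, lower semicontinuity transfers this bound to $v_w$. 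As $0>-\tfrac12$, Proposition~\ref{prop:uniqueness} applies and forces $w=0$, a contradiction. Feeding $\sup_j\|u_{\omega_j}\|_{\bar H^{2,-N}}<\infty$ back into Proposition~\ref{prop:SFE} yields $\sup_j\|u_{\omega_j}\|_{\bar H^{s,\beta}(\Omega)}<\infty$ for every $s\in\R$ and every $\beta<-\tfrac12$.

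\emph{Step 2: identifying and sharpening the limit.} With the uniform bounds and the compact embeddings $\bar H^{s,\beta'}\hookrightarrow\bar H^{s-1,\beta}$ for $\beta<\beta'<-\tfrac12$, every subsequence of $(u_{\omega_j})$ has a further subsequence converging in $\bar H^{s-1,\beta}(\Omega)$, and weakly in $\bar H^{s,\beta}(\Omega)$, to some $u\in\bar H^{s,\beta}_0(\Omega)$. Exactly as in Step 1, $u$ solves $P(\lambda)u=f$ with $u\in H^2_{\loc}\cap H^1_{0,\loc}$, and Proposition~\ref{prop:high_decay} shows $u$ is incoming; applying Proposition~\ref{prop:uniqueness} to $u-u_{\lambda+i0}$ (a difference of incoming solutions, hence incoming) forces $u=u_{\lambda+i0}$. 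Since the limit does not depend on the subsequence, $u_{\omega_j}\to u_{\lambda+i0}$ in $\bar H^{s-1,\beta}(\Omega)$ for every $\beta<-\tfrac12$. To reach the sharp space, set $d_j:=u_{\omega_j}-u_{\lambda+i0}$: by Step 1 (with Sobolev order $s+1$) it is bounded in $\bar H^{s+1,\beta}(\Omega)$, while it tends to $0$ in $\bar H^{s-1,\beta}(\Omega)$ by the previous sentence, so interpolating in the Sobolev index at the fixed weight $\beta$ gives $\|d_j\|_{\bar H^{s,\beta}}\lesssim\|d_j\|_{\bar H^{s+1,\beta}}^{1/2}\|d_j\|_{\bar H^{s-1,\beta}}^{1/2}\to 0$, which is the asserted convergence in $\bar H^{s,\beta}_0(\Omega)$.

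I expect the genuine difficulty to be concentrated in Step 1: one has to be sure that the blow-up limit $w$ really inherits the radiation (incoming) condition rather than merely solving the homogeneous stationary equation, and this is exactly what the $\epsilon$-uniformity of the high-frequency decay estimate of Proposition~\ref{prop:high_decay} provides, so that Proposition~\ref{prop:uniqueness} can be invoked. The remaining ingredients — compact embeddings of weighted Sobolev spaces, passing the constant-coefficient equation and the Neumann trace to the limit, and the interpolation at the end — are routine.
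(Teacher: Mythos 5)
Your proof is correct and follows essentially the same route as the paper: a contradiction argument that combines the semi-Fredholm estimate (Proposition~\ref{prop:SFE}), compactness of weighted Sobolev embeddings, and the uniqueness result (Proposition~\ref{prop:uniqueness}), with the high-frequency decay estimate (Proposition~\ref{prop:high_decay}) supplying uniformity of the incoming radiation condition in the blow-up limit. The only cosmetic differences are that you normalize the blow-up sequence in $\bar H^{2,-N}$ rather than in $\bar H^{s,\beta}$ as the paper does, and you make the upgrade from weak to strong convergence in the target space explicit via interpolation in the Sobolev index, which the paper's final step leaves more terse.
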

\begin{proof}
    1. It suffices to prove the proposition for $s > 2$. We first claim that $\|u_{\omega_j}\|_{\bar H^{s, \beta}(\Omega)}$ must be bounded. By way of contradiction, then, assume otherwise.  Upon passing to a subsequence, we may assume that $\|u_{\omega_j}\|_{\bar H^{s, \beta}(\Omega)} \to \infty$. Define the rescaling
    \[\tilde u_j = \frac{u_{\omega_j}}{\|u_{\omega_j}\|_{\bar H^{s, \beta}(\Omega)}}.\]
    Since $\bar H^{s, \beta}(\Omega) \hookrightarrow \bar H^{2, -N}(\Omega)$ compactly, we can further pass to a subsequence and assume that $\tilde u_j \to \tilde u_0$ in $\bar H^{2, -N}(\Omega)$. Observe that 
    \[P(\omega_j) \tilde u_j = \frac{f}{\|u_{\omega_j}\|_{\bar H^{s, \beta}(\Omega)}} \to 0\]
    in all weighted Sobolev norms.  Since $P(\omega_j) \to P(\lambda)$ as second-order differential operators, it follows that
    \begin{equation}\label{eq:pu0}
        P(\lambda) \tilde u_0 = 0.
    \end{equation}
    Now we apply the semi-Fredholm estimate of Proposition~\ref{prop:SFE} with $s' > s$ and $-\frac{1}{2} > \beta' > \beta$ to find that 
    \[\|\tilde u_j\|_{\bar H^{s', \beta'}(\Omega)} \le \|\tilde u_j\|_{\bar H^{2, -N}(\Omega)} + \frac{\|f\|_{H^{s'}(\Omega)}}{\|u_{\omega_j}\|_{\bar H^{s, \beta}(\Omega)}},\]
    which in particular implies that $\|\tilde u_j\|_{\bar H^{s', \beta'}(\Omega)}$ is bounded as $j \to \infty$. Again passing to a subsequence, we can assume that $\tilde u_j \to \tilde u_0$ in $\bar H^{s, \beta}(\Omega)$. Therefore, 
    \begin{equation}\label{eq:nontrivial}
        \|\tilde u_0\|_{\bar H^{s, \beta}(\Omega)} = 1.
    \end{equation}

    \noindent
    2. Next, let us show that $\tilde u_0$ is an incoming solution. Let $\tilde v_j$ and $\tilde v_0$ denote the Neumann data of $\tilde u_j$ and $u_0$ respectively.
    Employing the cutoffs $\chi_\pm$ from Proposition~\ref{prop:uniqueness},
    we see by continuity that $\widetilde \Pi_\pm \chi_\pm \tilde v_j \to \widetilde \Pi_{\pm} \chi_\pm \tilde v_0$ in $H^{s - 2, \beta}(\partial \Omega)$. Applying the high frequency decay estimate in Proposition~\ref{prop:high_decay} to the Neumann data $\tilde v_j$ with $s' > s - 2$ and $\alpha' > -1/2$, we also see that
    \[\|\widetilde \Pi_\pm \chi_\pm \tilde v_j \|_{\bar H^{s', \alpha'}(\partial \Omega)} \le \|\tilde u_j\|_{\bar H^{2, -N}} < C.\]
    Upon passing to a subsequence, we may assume that $\widetilde\Pi_\pm \chi_\pm \tilde v_j \to \widetilde\Pi_\pm \chi_\pm \tilde v_0$ in $\bar H^{s - 2, \alpha}(\Omega)$ for some $\alpha \in (-\frac{1}{2}, \alpha')$. Therefore, $\tilde u_0$ is a incoming solution to~\eqref{eq:pu0}, and it is nontrivial by~\eqref{eq:nontrivial}, which contradicts Proposition~\ref{prop:uniqueness}.

    \noindent
    3. So far, we have shown that $u_{\omega_j}$ is bounded in $\bar H^{s, \beta}_0(\Omega)$ for all $s > 2$ and $\beta < -1/2$. Then upon passing to a subsequence, we may assume that $u_{\omega_j} \to u_0$ (weakly) for some $u_0 \in \bar H^{s, \beta}_0(\Omega)$, and $u_0$ must satisfy
    \[P_\omega u_0 = f.\]
    Furthermore, by applying the high frequency decay estimate of Proposition~\ref{prop:high_decay} as in Step 2, we conclude that $u_0$ is incoming, so by Proposition~\ref{prop:uniqueness}, 
    \[u_0 = u_{\lambda + i0}.\]
    Therefore, all convergent subsequences of $u_{\omega_j}$ in $\bar H_0^{s, \beta}(\Omega)$ converge to $u_{\lambda + i0}$, which in turn implies that $u_{\omega_j} \to u_{\lambda + i0}$ in $\bar H_0^{s, \beta}(\Omega)$. 
\end{proof}
To access higher regularity of the spectral measure, we also need the following differentiated limiting absorption principle. 
\begin{proposition}\label{prop:LAP_diff}
    Let $u_{\lambda + i0}$ be the unique incoming solution to the stationary internal wave equation~\eqref{eq:SIW}. Then $(\lambda \mapsto u_{\lambda + i0}) \in C^1(\mathcal J; \bar H_0^{s, \beta}(\Omega))$ for all $\beta < -1/2$ and $s \ge 1$. In particular, if $\omega_j \to \lambda \in \mathcal J$, $\Im \omega_j > 0$, then 
    \[\partial_\omega u_{\omega_j} \to \partial_\lambda u_{\lambda + i0} \quad \text{in} \quad \bar H_0^{s, \beta}(\Omega) \quad \text{as} \quad j \to \infty.\] 
\end{proposition}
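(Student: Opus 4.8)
The plan is to prove the two assertions of Proposition~\ref{prop:LAP_diff} --- continuous differentiability of $\lambda\mapsto u_{\lambda+i0}$ and the convergence $\partial_\omega u_{\omega_j}\to\partial_\lambda u_{\lambda+i0}$ --- by running the compactness-and-uniqueness scheme of Proposition~\ref{prop:LAP} a second time, now driven by the differentiated estimates of \S\ref{sec:end}--\S\ref{sec:global}. As a preliminary step I would record that Proposition~\ref{prop:LAP} already gives continuity: if $\lambda_k\to\lambda$ in $\mathcal J$ then $u_{\lambda_k+i0}\to u_{\lambda+i0}$ in $\bar H^{s,\beta}_0(\Omega)$ for every $\beta<-1/2$, $s\ge 1$. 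Indeed Propositions~\ref{prop:high_decay} and~\ref{prop:SFE} hold verbatim for the incoming solutions $u_{\lambda_k+i0}$ themselves (with $\epsilon=0$ the ends carry only incoming Fourier modes, so their $\widetilde\Pi_\pm\chi_\pm$-data is $O(\langle\cdot\rangle^{-\infty})$ exactly as in the proof of Proposition~\ref{prop:uniqueness}), so the contradiction/compactness argument of Proposition~\ref{prop:LAP} applies line for line with $\omega_j$ replaced by $\lambda_k$.

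Now fix $\lambda\in\mathcal J$ and $\omega_j\to\lambda$ with $\Im\omega_j>0$. I would first show that $\{\partial_\omega u_{\omega_j}\}$ is bounded in $\bar H^{s,\beta}_0(\Omega)$. If not, rescale $\tilde w_j:=\partial_\omega u_{\omega_j}/\|\partial_\omega u_{\omega_j}\|_{\bar H^{s,\beta}}$ and pass to a subsequence with $\tilde w_j\to\tilde w_0$ in $\bar H^{2,-N}(\Omega)$. From~\eqref{eq:Pdu} with $j=1$ and the (uniform) boundedness of $u_{\omega_j}$ in every $\bar H^{s',\beta'}_0(\Omega)$, $\beta'<-1/2$ (Proposition~\ref{prop:LAP}), the rescaled forcing $2\omega_j\Delta u_{\omega_j}/\|\partial_\omega u_{\omega_j}\|_{\bar H^{s,\beta}}$ tends to $0$ in weighted Sobolev norms, so $P(\lambda)\tilde w_0=0$. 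Applying the differentiated semi-Fredholm estimate of Proposition~\ref{prop:SFE_diff} to the rescaled equation upgrades the convergence to $\bar H^{s,\beta}_0(\Omega)$, whence $\|\tilde w_0\|_{\bar H^{s,\beta}}=1$. Proposition~\ref{prop:high_decay_deriv} then bounds the $\widetilde\Pi_\pm\chi_\pm$-part of the Neumann data of $\tilde w_j$ in $H^{s',\alpha'}$ with $\alpha'>-1/2$, uniformly in $j$; after a further subsequence $\tilde w_0$ is a nontrivial solution of $P(\lambda)\tilde w_0=0$ whose $\widetilde\Pi_\pm\chi_\pm$-Neumann data lies in $H^{0,\alpha}$ for some $\alpha>-1/2$. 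Here one uses crucially that, since $P(\lambda)\tilde w_0=0$, the $x_1$-weighted transport terms that $\partial_\omega u_\omega$ carries in the ends (the first sum in~\eqref{eq:deriv_series}) are \emph{absent} from $\tilde w_0$, so its end expansion is a genuine superposition of $e^{\pm ic_\lambda kx_1}\sin(kx_2)$; thus $\tilde w_0$ is incoming in the sense of Definition~\ref{def:io} and Proposition~\ref{prop:uniqueness} forces $\tilde w_0=0$, a contradiction. Hence $\{\partial_\omega u_{\omega_j}\}$ is bounded; extracting a weak limit $w_\lambda\in\bar H^{s,\beta}_0(\Omega)$ and running the same high-frequency/uniqueness argument on the difference of any two subsequential limits (again the transport terms cancel, since the difference solves $P(\lambda)(\cdot)=0$) shows $w_\lambda$ is independent of the subsequence, so $\partial_\omega u_{\omega_j}\to w_\lambda$ in $\bar H^{s,\beta}_0(\Omega)$.

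Finally I would identify $w_\lambda$ with the genuine Banach-space derivative. For fixed $\epsilon>0$, holomorphy of $\mu\mapsto u_{\mu+i\epsilon}$ gives $u_{\mu+i\epsilon}-u_{\lambda+i\epsilon}=\int_\lambda^\mu\partial_\omega u_{t+i\epsilon}\,\md t$ in $\bar H^{s,\beta}_0(\Omega)$; letting $\epsilon\downarrow 0$ the left side tends to $u_{\mu+i0}-u_{\lambda+i0}$ (Proposition~\ref{prop:LAP}), and, using the uniform bound on $\|\partial_\omega u_{t+i\epsilon}\|_{\bar H^{s,\beta}}$ established above together with dominated convergence, the right side tends to $\int_\lambda^\mu w_t\,\md t$. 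Re-running the compactness argument with the real part of the spectral parameter varying shows $t\mapsto w_t$ is continuous into $\bar H^{s,\beta}_0(\Omega)$, so $\lambda\mapsto u_{\lambda+i0}\in C^1(\mathcal J;\bar H^{s,\beta}_0(\Omega))$ with derivative $\partial_\lambda u_{\lambda+i0}=w_\lambda=\lim_j\partial_\omega u_{\omega_j}$, which is exactly the stated conclusion.

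The step I expect to fight hardest for is the uniform boundedness of $\{\partial_\omega u_{\omega_j}\}$ in $\bar H^{s,\beta}_0(\Omega)$ for the full range $\beta<-1/2$: differentiating the end exponential $e^{-ic_\omega k(x_1+1)}$ in~\eqref{eq:deriv_series} produces an extra factor of $x_1$, so a direct appeal to Propositions~\ref{prop:low_decay_deriv} and~\ref{prop:SFE_diff} only controls $\partial_\omega u_\omega$ uniformly for $\beta<-3/2$. The idea for closing the gap is to split, in each end, $\partial_\omega u_\omega$ into the explicit transport sum $\sum_k -\tfrac{ika_k}{\omega^2\sqrt{1-\omega^2}}(x_1+1)\sin(kx_2)e^{-ic_\omega k(x_1+1)}$ and the remainder $\sum_k b_k\sin(kx_2)e^{-ic_\omega k(x_1+1)}$: the remainder obeys exactly the estimates already proved for $u_\omega$ in Proposition~\ref{prop:low_decay}, while the transport sum is estimated directly using the coefficient bounds of Proposition~\ref{prop:high_decay} together with the rapid decay of $(a_k)_k$ --- which holds because in the ends $u_\omega$ solves a constant-coefficient equation with flat Dirichlet boundary, so all even $x_2$-derivatives vanish on $\partial\Omega$ and its odd $x_2$-reflection is smooth. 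Carrying this refined decomposition through the boundary-reduced propagation of \S\ref{sec:global} (and through the identity~\eqref{eq:Pdu}) is the technical heart of the differentiated limiting absorption principle, and is where I would expect to spend the most effort.
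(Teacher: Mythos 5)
Your overall scheme is the paper's: rescale a putatively unbounded sequence $\partial_\omega u_{\omega_j}$, use the differentiated semi-Fredholm estimate (Proposition~\ref{prop:SFE_diff}) to upgrade compactness, use the differentiated end estimate (Proposition~\ref{prop:high_decay_deriv}) to see the limit is incoming, and invoke Proposition~\ref{prop:uniqueness}; your fundamental-theorem-of-calculus identification of the limit with $\partial_\lambda u_{\lambda+i0}$ is a legitimate variant of the paper's ``derivatives converge uniformly, hence the limit is the derivative'' step, and your preliminary continuity remark matches Proposition~\ref{prop:LAP}.

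The genuine gap is precisely the step you single out as the technical heart: uniform boundedness (and convergence) of $\partial_\omega u_{\omega_j}$ in $\bar H^{s,\beta}_0(\Omega)$ for the full range $\beta<-1/2$. This is not merely hard; it fails. By~\eqref{eq:deriv_series}, $\partial_\omega u_\omega$ contains in each end the transport term $\sum_k -\tfrac{ika_k}{\omega^2\sqrt{1-\omega^2}}(x_1+1)\sin(kx_2)e^{-ic_\omega k(x_1+1)}$, and as $\epsilon\to0$ the exponentials stop decaying, so this term is $(x_1+1)$ times a bounded, non-decaying quasi-periodic profile whose coefficients are generically nonzero. Its weighted $L^2$-norm in the end is finite, uniformly in $\epsilon$, only when $2(\beta+1)<-1$, i.e.\ $\beta<-3/2$; the rapid decay of $(a_k)$ in $k$ (which is true and uniform) is irrelevant, because the obstruction is the linear growth in $x_1$, not $x_2$-regularity. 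Indeed $\partial_\lambda u_{\lambda+i0}$ itself generically does \emph{not} lie in $\bar H^{s,\beta}(\Omega)$ for $\beta\in[-3/2,-1/2)$, so no decomposition of the end series and no amount of propagation through \S\ref{sec:global} can produce the bound you are after. The paper does not attempt it: its proof of this proposition runs your boundedness/convergence argument entirely at the weight $\beta<-3/2$, matching the thresholds of Propositions~\ref{prop:low_decay_deriv} and~\ref{prop:SFE_diff} (the $\beta<-1/2$ range concerns $u_{\lambda+i0}$ itself via Proposition~\ref{prop:LAP}, not its $\lambda$-derivative), and that weaker conclusion is all that is used for the $C^1$ regularity of the spectral measure and for Theorem~\ref{thm:tide}. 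Concretely: abandon the refined end decomposition, prove boundedness and convergence of $\partial_\omega u_{\omega_j}$ only for $\beta<-3/2$, and the remainder of your argument goes through essentially as written.
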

\begin{proof}
The proof is similar to that of Proposition~\ref{prop:LAP}.

    1. Let $\beta < -3/2$ and $s > 2$. By way of contradiction, assume that $\|u_\omega\|_{\bar H^{s, \beta}(\Omega)}$ is unbounded as $\epsilon \to 0$. Then pick a sequence $\omega_j$ so that $\|\partial_\omega u_{\omega_j}\|_{\bar H^{s, \beta}} \to \infty$. Again, define 
    \[\tilde u_j = \frac{\partial_\omega u_{\omega_j}}{\|\partial_\omega u_{\omega_j}\|_{\bar H^{s, \beta}(\Omega)}}.\]
    This is clearly bounded in $\bar H^{s, \beta}(\Omega)$, so upon passing to a subsequence, we may assume that it converges in $\bar H^{2, -N}(\Omega)$ to some $\tilde u_0 \in \bar H^{2, -N}(\Omega)$. Note that
    \[P(\omega_j) \tilde u_j = \frac{2 \omega \Delta u_{\omega_j}}{\|\partial_\omega u_{\omega_j}\|_{\bar H^{s, \beta}(\Omega)}}\]
    By Proposition~\ref{prop:LAP}, right-hand-side tends to zero in $\bar H^{s, \beta}$ for every $s$, and the left-hand-side converges in tempered distributions, hence
    \begin{equation}\label{eq:lim_free}
        P(\lambda) \tilde u_0 = 0.
    \end{equation}
    Let $s' > s$ and $\beta' \in (\beta, -3/2)$. By Proposition~\ref{prop:SFE_diff}, we also have 
    \[\|\tilde u_j \|_{\bar H^{s', \beta'}(\Omega)} \le C \left(\frac{\|f\|_{H^{s' + 1}(\Omega)} + \|u_{\omega_j} \|_{\bar H^{2, -N}(\Omega)}}{\|\partial_\omega u_{\omega_j}\|_{\bar H^{s', \beta}(\Omega)}} + \|\tilde u_{\omega_j} \|_{\bar H^{2, -N}(\Omega)} \right)< C, \]
    where we used the fact that $u_j$ is bounded in $\bar H^{2, -N}(\Omega)$ by Proposition~\ref{prop:LAP}, and $C$ may change from line to line. Therefore, $\tilde u_j \to \tilde u_0$ in $\bar H^{s, \beta}(\Omega)$, which in particular means that
    \[\|\tilde u_0\|_{\bar H^{s, \beta}(\Omega)} = 1.\]
    Next, we can estimate the Neumann data $\tilde v_j$ of $\tilde u_j$. It follows from Proposition~\ref{prop:high_decay_deriv} that 
    \[\|\widetilde \Pi_+ \chi_+ \tilde v_j\|_{\bar H^{s, \alpha}(\Omega)} < C\]
    for all $s, \alpha \in \R$, so $\|\widetilde \Pi_+ \chi_+ \tilde v_0\|_{\bar H^{s, \alpha}(\Omega)} < \infty$. Take $\alpha > 1/2$, which gives that $\tilde u_0$ is a nontrivial incoming solution to \eqref{eq:lim_free}, contradicting Proposition~\ref{prop:uniqueness}

    \noindent
    2. We have shown that the sequence $\partial_{\omega} u_{\omega_j}$ is bounded in $\bar H^{s, \beta}$ for every $s \in \R$ and $\beta < -3/2$. Fix $s > 2$ and $\beta < -3/2$, and choose a subsequence that converges to some limit $\dot u_{\lambda + i0} \in \bar H^{s, \beta}(\Omega)$. Note that the limit solves the equation
    \begin{equation}\label{eq:dotu}
        P(\lambda) (\dot u_{\lambda + i0}) = 2 \lambda \Delta u_{\lambda + i0}.
    \end{equation}
    By Proposition~\ref{prop:high_decay_deriv} it follows that $\dot u_{\lambda + i0}$ is incoming in the sense of Definition~\ref{def:io}. If a different limit exists for some other choice of a subsequence of $\partial_\omega u_{\omega_j}$, then the limit must also be incoming and satisfy~\eqref{eq:dotu}. Then the difference with $\dot u_{\lambda + i0}$ must also be incoming and solves the stationary internal wave equation with zero forcing. So it follows from Proposition~\ref{prop:uniqueness} that the whole sequence converges to the same limit, that is, 
    \[\partial_\omega u_{\omega_j} \to \dot u_{\lambda + i0} \quad \text{in} \quad \bar H^{s, \beta}(\Omega).\]
    Since we have convergence for any sequence of $\omega_j \to \lambda \in \mathcal J$, it follows that $\partial_{\lambda} u_{\lambda + i \epsilon} \to \dot u_{\lambda + i0}$ uniformly in $\bar H^{s, \beta}(\Omega)$ for $\lambda \in \mathcal J$. Therefore, $\dot u_{\lambda + i0} = \partial_\lambda u_{\lambda + i0}$. 
\end{proof}

Together with Stone's formula, Propositions~\ref{prop:LAP} and~\ref{prop:LAP_diff} immediately implies a slightly stronger version of Theorem~\ref{thm:spectral}. 
\begin{proof}[Proof of Theorem~\ref{thm:spectral}]
    Flipping the sign of $\epsilon$ in the proof of Proposition~\ref{prop:LAP}, we also see that if $\omega_j \to \lambda \in \mathcal J$, $\Im \omega_j < 0$, then 
    \[u_{\omega_j} \to u_{\lambda - i0} \quad \text{in} \quad \bar H^{s, \beta}_0(\Omega) \quad \text{as} \quad j \to \infty\]
    for every $s \ge 1$ and $\beta < -1/2$, where $u_{\lambda - i0}$ is the unique outgoing solution to the stationary internal waves equation~\eqref{eq:SIW}. 
    
    By Stone's formula, the spectral measure $E(z) \, \md z$ of $P$ is given for $z \in \mathcal J^2$ and $f\in C_c^\infty(\Omega; \R)$ by
    \begin{equation}\label{eq:stones}
        E(z)f = \frac{1}{2 \pi i} \Delta (u_{\sqrt{z} + i0} - u_{\sqrt{z} - i0}) \in C^1(\mathcal J^2; \bar H^{s, \beta}(\Omega)),
    \end{equation}
    for every $s \in \R$ and $\beta < -1/2$. Note that the $C^1$ regularity is a consequence of Proposition~\ref{prop:LAP_diff}.
\end{proof}

\subsection{Long-time evolution}
Finally, we return to the evolution problem. The following analysis is nearly identical to~\cite[\S7]{Li:24}, where we simply solve the evolution problem using the functional solution~\eqref{eq:functional_sol}. We will use the notation introduced in~\eqref{eq:functional_sol} below. 
\begin{proof}[Proof of Theorem~\ref{thm:tide}]
Since $\Omega$ being $\lambda$-subcritical is an open condition, there exists $\delta > 0$ so that $\Omega$ is $\lambda'$-subcritical for every $\lambda'^2 \in [\lambda^2 - 2\delta, \lambda^2 + 2 \delta]$. Fix a cutoff $\varphi \in \CIc(\R; [0, 1])$ so that $\varphi = 1$ on $[\lambda^2 - \delta, \lambda^2 + \delta]$ and $\supp \varphi \subset (\lambda^2 - 2 \delta, \lambda^2 + 2 \delta)$. We split the solution $u(t) = \Delta_\Omega^{-1} w(t)$ into two pieces by
\begin{equation*}
    \begin{gathered}
        u(t) = \Re\big(e^{i \lambda t} (u_1(t) + r_1(t))\big) \quad \text{where} \\
        u_1(t) \coloneqq \Delta_\Omega^{-1} \varphi(P) \mathbf W_{t, \lambda}(P) f, \qquad r_1(t) \coloneqq \Delta_\Omega^{-1}(I - \varphi(P)) \mathbf W_{t, \lambda}(P)f.
    \end{gathered}
\end{equation*}
Observe that 
\[\big|\Re\big(e^{i \lambda t} \mathbf W_{t, \lambda}(z) (1 - \varphi(z))\big)\big| = \frac{|(\cos(t \sqrt{z}) - \cos(t \lambda))|(1 - \varphi(z))}{|\lambda^2 - z|} \le \frac{2}{\lambda \delta}.\]
Therefore, it follows from the spectral theorem that 
\[\|\Re(e^{i \lambda t}r_1(t))\|_{\bar H^{1}_0(\Omega)} \le \frac{2}{\lambda \delta} \|f\|_{ H^{-1}(\Omega)}\]
for all $t \ge 0$. 

By~\eqref{eq:stones}, there exists $\nu(z) \coloneqq \Delta_{\Omega}^{-1}E(z)f \in C^1([\lambda^2 - 2\delta, \lambda^2 + 2 \delta]; \bar H^{s, \beta}_0(\Omega))$ with $s\geq 1$ and $\beta<\frac12$, such that 
\[u_1(t) = \int_\R \varphi(z) \mathbf W_{t,\lambda}(z) \nu(z) \, \md z.\]
Recalling the definition of $\mathbf W_{t,\lambda}(z)$ in \eqref{eq:functional_sol}, we compute
\[\begin{split}
    u_1(t) = & \frac{1}{2i}\int_0^t\int_\R \left( e^{is(\sqrt z-\lambda)} - e^{-is(\sqrt z+\lambda)} \right)\frac{1}{\sqrt z} \varphi(z)\nu(z) \, \md z\,\md s \\
    = & -i\int_0^t \int_\R \left(  e^{is(\zeta-\lambda)} - e^{- is (\zeta + \lambda)} \right)\varphi(\zeta^2)\nu(\zeta^2) \, \md \zeta \, \md s \\
    = & -i \int_{-t}^0 \widehat q_+(s) \,\md s + i \int_{0}^t \widehat q_- (s) \, \md s
\end{split}\]
where $q_\pm(\zeta)\coloneqq \varphi((\zeta\pm\lambda)^2)\nu((\zeta \pm \lambda)^2)$, hence $q_\pm\in C^1_c(\RR; \bar H^{s,\beta}_0(\Omega))$. Consequently,
\[ \int_\R \|\widehat q_\pm(s)\|_{\bar H^{s,\beta}}\,\md s\leq C \left( \int_\R \langle s \rangle^{1+2\alpha_0}\|\widehat q_\pm(s)\|^2_{\bar H^{s,\beta}} \,\md s  \right)^{\frac12} = C\|q_\pm\|_{H^{\frac12+\alpha_0}(\RR; \bar H^{s,\beta})}<\infty \]
for any $0<\alpha_0\leq \frac12$.
Hence by the Dominated Convergence Theorem, we conclude that in $\bar H^{s,\beta}(\Omega)$, as $t\to +\infty$,
\[\begin{split}
    u_1(t) 
    \to & -i \int_{-\infty}^{0} \widehat q_+(s)\,\md s + i \int_0^{+\infty} \widehat q_-(s)\,\md s \\
    = & \sum_{\pm}\int_\R (\zeta\pm i0)^{-1} q_\pm(\zeta)\,\md \zeta 
    = \Delta_{\Omega}^{-1}\varphi(P)(P-\lambda^2+i0)^{-1}f.
\end{split}\]
Thus we can write 
\[\begin{gathered} 
u_1(t) = \Delta_{\Omega}^{-1}(P-\lambda^2+i0)^{-1}f + r_2(t) + \tilde{\mathcal E}(t)
\end{gathered}\]
with $\|\tilde{\mathcal E}(t)\|_{\bar H^{s,\beta}(\Omega)}\to 0$ and 
\[ r_2(t)\coloneqq \Delta_{\Omega}^{-1}(P-\lambda^2)^{-1}(\varphi(P)-1)f, \ \|r_2(t)\|_{\bar H^1_0(\Omega)}\leq C\|f\|_{H^{-1}(\Omega)}. \]

Theorem~\ref{thm:tide} then follows upon setting 
\[r(t) = \Re (e^{i \lambda t} (r_1(t) + r_2(t)), \quad \mathcal E(t) = \Re(e^{i \lambda t} \tilde{\mathcal{E}}(t)).\]
In addition, we also see that $u^+ = \Delta_\Omega^{-1} (P - \lambda^2 + i0)^{-1} f = u_{\lambda - i0}$ is the unique outgoing solution to the stationary internal wave equation~\eqref{eq:SIW}.
\end{proof}

\bibliography{evolution}
\bibliographystyle{alpha}

\end{document}